\def\C{{\mathbb C}}
\def\R{{\mathbb R}}
\def\RR{{\mathbb{R}}}
\def\N{{\mathbb N}}
\def\le{\leqslant}
\def\ge{\geqslant}
\newcommand{\veps}{\varepsilon}
\newcommand{\eps}{\epsilon}
\newcommand{\wt}{\widetilde}
\newcommand{\ud}{\,\mathrm{d}}
\newcommand{\Or}{\mathcal{O}}
\theoremstyle{plain}
\newtheorem{theorem}{Theorem}[section]
\newtheorem{lemma}[theorem]{Lemma}
\newtheorem{proposition}[theorem]{Proposition}
\newtheorem{hyp}{Assumption}
\theoremstyle{definition}
\newtheorem*{remark*}{Remark}
\numberwithin{equation}{section}
\newcommand{\abs}[1]{\lvert#1\rvert}
\newcommand{\Abs}[1]{\left\lvert#1\right\rvert}
\newcommand{\norm}[1]{\lVert#1\rVert}
\newcommand{\Norm}[1]{\left\lVert#1\right\rVert}
\newcommand{\dps}{\displaystyle}
\newcommand{\FGA}{\text{FGA}}
\DeclareMathOperator{\tr}{tr}
\newcommand{\red}[1]{#1}
\title[Frozen Gaussian approximation with surface hopping]{Frozen Gaussian approximation with surface hopping for mixed quantum-classical dynamics: A mathematical justification of fewest switches surface hopping algorithms} 
\author{Jianfeng Lu} 
\address{Department of Mathematics, Department of
  Physics, and Department of Chemistry, Duke University, Box 90320,
  Durham NC 27708, USA} 
\email{jianfeng@math.duke.edu}
\author{Zhennan Zhou}
\address{Department of Mathematics, Duke University, Box 90320, Durham NC 27708, USA}
\email{zhennan@math.duke.edu}
\date{\today}\thanks{This work is partially supported by the National
  Science Foundation under grants DMS-1312659, DMS-1454939 and
  RNMS11-07444 (KI-Net). J.L. is also partially supported by the
  Alfred P.~Sloan Foundation. J.L. would like to thank Sara Bonella,
  Giovanni Ciccotti, Joe Subotnik and Jonathan Weare for helpful
  discussions, and especially John Tully for encouragement.}
\begin{document}

\begin{abstract}
  We develop a surface hopping algorithm based on frozen Gaussian
  approximation for semiclassical matrix Schr\"odinger equations,
  \red{in the spirit of Tully's fewest switches surface hopping
    method}. The algorithm is asymptotically derived from the
  Schr\"odinger equation with rigorous approximation error
  analysis. The resulting algorithm can be viewed as a path integral
  stochastic representation of the semiclassical matrix Schr\"odinger
  equations. Our results provide mathematical understanding to and
  shed new light on the important class of surface hopping methods in
  theoretical and computational chemistry.
\end{abstract}

\maketitle


\section{Introduction}

The surface hopping algorithms, in particular the celebrated Tully's
fewest switches surface hopping (FSSH) algorithm \cites{Tully,Tully2},
are widely used in theoretical and computational chemistry for mixed
quantum-classical dynamics in the non-adiabatic regime.

The Schr\"odinger equation, which is often high dimensional in
chemistry applications, is impractical to solve directly due to curse
of dimensionality. Thus, development of algorithms based on
semiclassical approximation, which only involve solving ODEs, is
necessary. Within the Born-Oppenheimer approximation, the resulting
algorithm from the semiclassical approximation is the familiar ab
initio molecular dynamics \red{and related semiclassical algorithms}. However, in many applications, the
adiabatic assumption in the Born-Oppenheimer approximation is
violated, thus, we need to consider the non-adiabatic dynamics. The
surface hopping algorithms are hence proposed to incorporate in quantum behavior due to the
non-adiabaticity.

Despite the huge popularity of the algorithm and the many attempts in
the chemistry literature for corrections and further improvements (see \textit{e.g.},
\cites{Prezhdo,Schutte,Schwartz,Subotnik1,Subotnik2,Subotnik3,HannaK1}),
which is a very active area to date, the understanding of such
algorithms, in particular, how surface hopping type algorithms can be
derived from the nuclei Schr\"odinger equations, remains rather poor.

\red{In this work, we rigorously derive a surface hopping algorithm,
  named frozen Gaussian approximation with surface hopping (FGA-SH),
  to approximate the Schr\"odinger equations with multiple adiabatic
  states in the semiclassical regime. The FGA-SH algorithm shares
  similar spirit as the FSSH algorithms used in the chemistry
  literature \cite{Tully}, while it also differs in some essential
  ways.} Hence, besides providing a rigorously asymptotically correct
approximation, our derivation hopefully will also help clarify several
issues and mysteries around the FSSH algorithm, and lead to systematic
improvement of this type of algorithms.

The key observation behind our work is a path integral stochastic
representation to the solution to the semiclassical Schr\"odinger
equations. The surface hopping algorithm can be in fact viewed as a
direct Monte Carlo method for evaluating the path integral. Thus, the
path space average provides an approximation to the solution of a high
dimensional PDE, similar to the familiar Feynman-Kac formula for
reaction diffusion type equations. To the best of our knowledge, this
has not been observed in the literature, and it is crucial for
understanding what the surface hopping algorithm really tries to
compute.

In this stochastic representation, the path space consists of
continuous trajectory in the phase space, whose evolution switches
between classical Hamiltonian flows corresponding to each energy
surface, and is hence piecewise deterministic, except at
hoppings. This is why these algorithms are called surface hopping
algorithms. Also to avoid any potential confusion, while we
approximate solutions to the Schr\"odinger equation, the path integral
we consider here (as it only works in the semiclassical limit) is very
different from the usual Feynman path integral for quantum
mechanics. In particular, the stochastic representation is well
defined and gives an accurate approximation to the solution of the
Schr\"odinger equation in the semiclassical regime.

Before we continue, let us review some related mathematical works.
Somehow rather confusingly, sometimes the term ``surface hopping'' is
used for a very different algorithm \cite{Tully71} which is based on
Landau-Zener transition asymptotics \cites{Landau,Zener}. This
algorithm is designed for the situation of a single avoided crossing,
while the type of surface hopping algorithm we consider in this paper,
which is mostly often used in chemistry today, is quite different and
aims to work for general situations. The Landau-Zener asymptotics has
been mathematically studied by Hagedorn and
Joye~\cites{HagedornLZ1,HagedornLZ2}. The algorithm based on
Landau-Zener formula is also studied in the mathematics literature,
see \textit{e.g.}, \cites{Lasser1,Lasser2,SH_SQZ}. While the algorithm
we consider is very different, some of these numerical techniques
might be used in our context as well.

For the surface hopping algorithm we studied in this work, the
understanding in the chemistry literature (see e.g.,
\cites{HannaK1,Schutte,Subotnik3}) often starts from the
quantum-classical Liouville equation \cite{KCmodel}, which is a
natural generalization of the usual Moyal's evolution equation of
Wigner distribution to the matrix Schr\"odinger equations. In the
mathematics literature, the quantum-classical Liouville equation was
studied numerically in \cite{Chai} in low dimensions very
recently. While we are able to derive a surface hopping type
algorithm, our derivation is based on a different tool for
semiclassical analysis, the frozen Gaussian approximation,
\textit{aka} the Herman-Kluk propagator
\cites{HermanKluk,Kay1,Kay2,SwartRousse,FGA_Conv,FGACMS}. It is not
yet clear to us whether the surface hopping algorithms used in the
chemistry literature (and the one we derived) can be rigorously
justified from the view point of quantum-classical Liouville
equation. This remains an interesting research direction.

\medskip 

The surface hopping algorithm we derive is based on asymptotic
analysis on the phase space. The ansatz of the solution, represented
as an integration over the phase space and possible configurations of
hopping times is given in Section~\ref{sec:fga}, after a brief review
of the frozen Gaussian approximation for single surface case. For the
algorithmic purpose, it is more useful to take a stochastic
representation of the ansatz as a path integral, which is given in
Section~\ref{sec:prob}. A simple Monte Carlo algorithm for the path
space average then leads to a rigorously justifiable surface hopping
algorithm, which we will compare with and connect to those used in the
chemistry literature in Section~\ref{sec:literature}. The asymptotic
derivation of the ansatz is given in Section~\ref{sec:asymptotic}. The
main rigorous approximation result is stated in
Section~\ref{sec:convergence}, together with a few illustrating
examples. Some numerical examples of the algorithm are discussed in
Section~\ref{sec:numerics}. We conclude the paper with proofs of the
main result in Section~\ref{sec:proof}.

\section{\red{Integral representation for semiclassical matrix Schr\"odinger equations}}\label{sec:fga}

\subsection{Two-state \red{matrix} Schr\"odinger equation}

Consider the rescaled Schr\"odinger equation for nuclei and electrons
\begin{equation}\label{SE2}
i\veps \frac{\partial}{\partial t} u = - \frac{\veps^2}{2} \Delta_{x} u - \frac{1}{2} \Delta_{r}  u  +  V (x, r)  u.
\end{equation}
where $u(t,x,r)$ is the total wave function, $x\in \R^m$ represents
the nuclear degrees of freedom, $r\in \R^{n}$ denotes the electronic
degrees of freedom,  and $V(x,r)$ is the total interaction potential. Here, $\veps \ll 1$ is the square root of the mass ratio between the electrons and the nuclei (for simplicity, we assume that all nuclei have the same
mass). 

We define the electronic Hamiltonian 
\[
H_e=-\frac{1}{2}\Delta_r+V(x,r),
\]
whose eigenstates $\Psi_k(r;x)$, given by 
\begin{equation}\label{eig:eH}
H_e \Psi_k(r;x)=E_k(x)\Psi_k(r;x),
\end{equation}
are called the adiabatic states. Note that in the eigenvalue problem
\eqref{eig:eH}, $x$ enters as a parameter. \red{In particular, viewed as a function of $x$, the eigenvalues $E_k(x)$ will be referred as energy surfaces.}

In this work, we will only consider a finite number of adiabatic
states, that is, we assume the following expansion of the total wave
function
\[
u(t,x,r)=\sum_{n=0}^{N-1} u_n(t,x) \Psi_n(r;x).
\] 
This is justified if the rest of the spectrum of $H_e$ is far
separated from that of the states under consideration, so that the
transition between these $N$ energy surfaces and others is negligible.
For the separation condition and the corresponding spectral gap
assumption, the readers may refer to \cites{BO1,BO2} for detailed
discussions.

In fact, for simplicity of notation, we will assume that the number of
states is $N = 2$, the extension to any finite $N$ is
straightforward. In this case,
$u(t, x, r) = u_0(t, x) \Psi_0(r; x) + u_1(t, x) \Psi_1(r; x)$, the
original equation is equivalent to a system of PDEs of
$U = 
\bigl(\begin{smallmatrix} u_0 \\
  u_1
\end{smallmatrix} \bigr)$,
which we will henceforth refer to as the \emph{matrix Schr\"odinger
  equation}:
\begin{equation}\label{vSE}
  i \veps \partial_t  
  \begin{pmatrix} u_0 \\
    u_1 \end{pmatrix}  = -\frac{\veps^2}{2} \Delta_x \begin{pmatrix} u_0 \\
    u_1 \end{pmatrix} + 
  \begin{pmatrix} 
    E_0 \\
    & E_1
  \end{pmatrix} \begin{pmatrix} u_0 \\
    u_1 \end{pmatrix} -\red{\frac{\veps^2}{2}} \begin{pmatrix}
    D_{00} & {D_{01}} \\
{  D_{10}} & D_{11}
  \end{pmatrix} \begin{pmatrix} u_0 \\
    u_1 \end{pmatrix} -\red{\veps^2}  \sum_{j=1}^m 
  \begin{pmatrix} 
    d_{00} &{ d_{01}} \\
 {  d_{10}} & d_{11}
  \end{pmatrix}_j
  \partial_{x_j} \begin{pmatrix} u_0 \\
    u_1 \end{pmatrix},
\end{equation}
where
\red
{
\[
D_{kl}(x) =\langle \Psi_k(r;x), \Delta_x \Psi_l(r;x) \rangle_r, \quad
\left(d_{kl}(x)\right)_j =\langle \Psi_k(r;x), \partial_{x_j}
\Psi_l(r;x) \rangle_r, \quad \text{for } k, l = 0, 1,\; j = 1, \ldots,
m.
\]
}

\subsection{Brief review of the frozen Gaussian approximation}\label{sec:revfga}

Before we consider the matrix Schr\"odinger equation \eqref{vSE}, let us recall the  ansatz of frozen Gaussian approximation (\textit{aka} Herman-Kluk propagator) \cites{HermanKluk,Kay1,Kay2,SwartRousse} for scalar Schr\"odinger equation
\begin{equation}\label{eq:singleSE}
  i\veps \frac{\partial}{\partial t} u(t, x) = - \frac{\veps^2}{2} \Delta u(t, x) + E(x) u(t, x). 
\end{equation}
\red{Note that if we drop the terms
depending on $d$ and $D$ in \eqref{vSE}, it decouples to two equations of the form of \eqref{eq:singleSE}. }
The algorithm that we will derive for \eqref{vSE} can be viewed as an extension of the FGA to the matrix Schr\"odinger equation. 

The frozen Gaussian approximation is a convergent approximation to the
solution of \eqref{eq:singleSE} with $\Or(\veps)$ error
\cite{SwartRousse}.  It is based on the following \red{integral representation of an approximate solution to \eqref{eq:singleSE}}
\begin{equation}
  u_{\FGA}(t,x) =\frac{1}{(2\pi \veps)^{3m/2}} \int_{\R^{3m}} a(t,q,p)e^{\frac{i}{\veps}\Phi(t,x,y,q,p) } u(0, y) \ud y \ud q \ud p,
\end{equation}
where $u(0, \cdot)$ is the initial condition. Here, the phase function $\Phi$
is given by
\[
\Phi(t,x,y,q,p)=S(t,q,p)+P(t,q,p)\cdot (x-Q(t,q,p))-p\cdot(y-q) + \frac{i}{2}|x-Q(t,q,p)|^2 + \frac{i}{2}|y-q|^2.
\]
Given $q$ and $p$ as parameters, the evolution of $Q$ and $P$ are governed by the Hamiltonian flow according to classical Hamiltonian $h(q,p) = \frac{1}{2}\abs{p}^2 + E(q)$,
\begin{align*}
\frac{\ud}{\ud t}Q & =\partial_p h(Q,P),\\
\frac{\ud}{\ud t}P & =-\partial_q h(Q,P),
\end{align*}
with initial conditions $Q(0,p,q)=q$ and $P(0,q,p)=p$. The solution to
the Hamiltonian equations defines a trajectory on the phase space
$\R^{2m}$, which we call \emph{FGA trajectory}.  $S$ is the
action corresponding to the Hamiltonian flow, with initial condition
$S(0,q,p)=0$.  The equation of $a$ is obtained by matched asymptotics \red{and is given by:}
\begin{equation}
  \frac{\ud}{\ud t} a  = \frac 1 2  a \tr\bigl( Z^{-1}\bigl(\partial_z P - i \partial_z Q  \nabla^2_Q E(Q) \bigr) \bigr)
\end{equation}
with initial condition $a(0,q,p)= 2^{m/2}$, where we have used the
short hand notations
\[
\partial_z = \partial_q - i \partial_p,\quad\text{and} \quad  Z=\partial_z (Q+iP).
\]
Equivalently, we can rewrite as
\[
u_{\FGA}(t,x) =\frac{1}{(2\pi \veps)^{3m/2}} \int_{\R^{2m}} A(t,q,p) e^{\frac{i}{\veps}\Theta(t,x,q,p) } \ud q \ud p,
\]
where 
\begin{align*}
\Theta(t,x,q,p)& =S(t,q,p)+P(t,q,p)\cdot (x-Q(t,q,p))+ \frac{i}{2}|x-Q(t,q,p)|^2, \\
A(t,q,p) & = a(t,q,p)\int_{\R^m} u_0 (y) e^{\frac{i}{\veps} (-p\cdot(y-q)+ \frac{i}{2}|y-q|^2)} \ud y.
\end{align*}
As $A$ only differs from $a$ by a constant multiplication factor, it satisfies the same equation as $a$ does (with different initial condition). 

The following lemma, which we directly quote from \cite{FGACMS}*{Lemma 3.1}, states that the FGA ansatz reproduces the initial condition.
\begin{lemma}\label{lem:initial}
For $u\in L^2(\R^m)$, we have 
\[
u(x) = \frac{1}{(2\pi \veps)^{3m/2}} \int_{\R^{3m}} 2^{\frac{m}{2}} e^{\frac{i}{\veps}\Phi(0,x,y,q,p) } u(y) \ud y \ud q \ud p.
\]
\end{lemma}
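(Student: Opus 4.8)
The plan is to reduce the right-hand side, at time $t=0$, to the composition of a Gaussian wave-packet transform with its adjoint, and then invoke the classical resolution of the identity by coherent states. First I would note that at $t=0$ the FGA flow is trivial: $Q(0,q,p)=q$, $P(0,q,p)=p$ and $S(0,q,p)=0$, so the phase collapses to
\[
\Phi(0,x,y,q,p) = p\cdot(x-y) + \tfrac{i}{2}\abs{x-q}^2 + \tfrac{i}{2}\abs{y-q}^2 ,
\]
and hence $e^{\frac{i}{\veps}\Phi(0,x,y,q,p)}$ factors as a product of an $x$-Gaussian and a $y$-Gaussian centered at $q$. Introducing the normalized coherent state $g^{\veps}_{q,p}(x) := (\pi\veps)^{-m/4} \exp\!\bigl(\tfrac{i}{\veps}p\cdot(x-q) - \tfrac{1}{2\veps}\abs{x-q}^2\bigr)$, with $\norm{g^\veps_{q,p}}_{L^2}=1$, one checks that the integrand is $(\pi\veps)^{m/2}\, g^\veps_{q,p}(x)\,\overline{g^\veps_{q,p}(y)}\,u(y)$. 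Collecting the constant prefactors, $\dfrac{2^{m/2}(\pi\veps)^{m/2}}{(2\pi\veps)^{3m/2}} = \dfrac{1}{(2\pi\veps)^{m}}$, so the right-hand side of the claimed identity equals
\[
\frac{1}{(2\pi\veps)^{m}} \int_{\R^{2m}} \Bigl(\int_{\R^m} \overline{g^\veps_{q,p}(y)}\,u(y)\ud y\Bigr)\, g^\veps_{q,p}(x) \ud q\ud p = \frac{1}{(2\pi\veps)^{m}} \int_{\R^{2m}} \langle g^\veps_{q,p},u\rangle\, g^\veps_{q,p}(x) \ud q\ud p .
\]

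It then suffices to establish the resolution of identity $\dfrac{1}{(2\pi\veps)^{m}}\displaystyle\int_{\R^{2m}} \langle g^\veps_{q,p},u\rangle\, g^\veps_{q,p}\,\ud q\ud p = u$ for every $u\in L^2(\R^m)$. I would prove this by first scaling out $\veps$ (setting $y=\sqrt{\veps}\,y'$, $q=\sqrt{\veps}\,q'$, $p=\sqrt{\veps}\,p'$ reduces to the case $\veps=1$), and then recognizing $u\mapsto \langle g_{q,p},u\rangle$ as, up to a fixed unitary factor and a Gaussian reweighting in $(q,p)$, the Bargmann (FBI) transform, which is an isometry of $L^2(\R^m)$ onto its closed image in $L^2(\R^{2m})$; the stated formula is precisely the corresponding reconstruction $T^\ast T=\mathrm{Id}$. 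Alternatively — and more self-containedly — one verifies the identity directly for $u$ in the Schwartz class by Fubini's theorem: carry out the absolutely convergent $q$-integral, $\int_{\R^m} e^{-\frac{1}{2\veps}(\abs{x-q}^2+\abs{y-q}^2)}\ud q = (\pi\veps)^{m/2} e^{-\frac{1}{4\veps}\abs{x-y}^2}$ by completing the square, and then recognize the remaining $p$-integral as Fourier inversion in $y$, which returns $u(x)$ after the constants are tracked.

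The one genuine subtlety is that the $p$-integral $\int_{\R^m} e^{\frac{i}{\veps}p\cdot(x-y)}\ud p$ is only an oscillatory (distributional, $\delta$-function) integral, not a Lebesgue-convergent one, so Fubini cannot be applied literally for a general $L^2$ datum; this is the step where the argument must be made carefully. I would handle it either via the Bargmann-transform route above (where $L^2$-boundedness of both $T$ and $T^\ast$ is built in), or by proving the formula first on the dense subspace of Schwartz functions as above and then extending to all of $L^2(\R^m)$ by continuity, using that $u\mapsto \langle g^\veps_{q,p},u\rangle$ maps $L^2(\R^m)$ boundedly into $L^2(\R^{2m})$ (an immediate consequence of Schur's test, or Plancherel, on the Gaussian kernel) and that its adjoint is likewise $L^2$-bounded with the same norm. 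Everything else — the Gaussian integrations and the bookkeeping of the powers of $2\pi\veps$ — is routine.
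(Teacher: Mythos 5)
Your proposal is correct, and all the constants check out: at $t=0$ the phase indeed collapses to $p\cdot(x-y)+\tfrac{i}{2}\abs{x-q}^2+\tfrac{i}{2}\abs{y-q}^2$, the factorization $e^{\frac{i}{\veps}\Phi(0,x,y,q,p)}=(\pi\veps)^{m/2}g^\veps_{q,p}(x)\overline{g^\veps_{q,p}(y)}$ holds, and $2^{m/2}(\pi\veps)^{m/2}(2\pi\veps)^{-3m/2}=(2\pi\veps)^{-m}$, so the statement is exactly the coherent-state resolution of the identity. Note that the paper itself does not prove this lemma at all; it quotes it verbatim from the FGA literature (\cite{FGACMS}, Lemma 3.1), so your argument supplies a self-contained proof rather than an alternative to one given here, and it coincides with the standard proof in that reference: a Gaussian integration in $q$ (completing the square to get $(\pi\veps)^{m/2}e^{-\frac{1}{4\veps}\abs{x-y}^2}$), Fourier inversion in $p$, verification on the Schwartz class, and extension to $L^2$ by density using the $L^2\to L^2$ boundedness (in fact isometry, up to normalization) of the wave-packet/FBI transform and its adjoint. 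Your explicit flagging of the non-absolutely-convergent $p$-integral, and the density/boundedness argument to handle general $u\in L^2$, is precisely the care the statement requires; nothing is missing.
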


The next lemma is crucial for the asymptotic matching to derive the
evolution equations. The proof can be found in \cite{FGACMS}*{Lemma
  3.2} and \cite{FGA_Conv}*{Lemma 5.2}. \red{We recall this
  lemma here as it will be used in the extension of frozen Gaussian
  approximation to the matrix Schr\"odinger equations.}
\begin{lemma} \label{lem:asym}
For any vector $b(y,q,p)$ and any matrix $M(y,q,p)$ in Schwartz class viewed as functions of $(y,q,p)$, we have
\[
b \cdot (x-Q) \sim - \veps \partial_{z_k} (b_j Z_{jk}^{-1}),
\]
and
\[
(x-Q) \cdot M (x-Q) \sim \veps \partial_{z_l} Q_j M_{jk}Z_{kl}^{-1} +\Or(\veps^2)
\]
where Einstein's summation convention has been assumed.  Moreover, for
multi-index $\alpha$ that $|\alpha| \ge 3$,
\[
(x-Q)^{\alpha} \sim \Or(\veps^{|\alpha|-1}).
\]
Here, we denote by $f \sim g$ that
\[
\int_{\R^{3m}} f e^{\frac i \veps \Phi} \ud y \ud q \ud p =\int_{\R^{3m}} g  e^{\frac i \veps \Phi} \ud y \ud q \ud p.
 \] 
\end{lemma}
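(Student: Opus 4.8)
The plan is to reduce both assertions to a single exact identity for the $z$-derivative of the phase $\Phi$, after which everything is integration by parts in $(q,p)$. First I would compute $\partial_{z_k}\Phi=(\partial_{q_k}-i\partial_{p_k})\Phi$ directly from the definition of $\Phi$. The only nonelementary inputs are the Hamilton equations $\dot Q=P$, $\dot P=-\nabla E(Q)$ and the two identities $\partial_{q_k}S=P\cdot\partial_{q_k}Q-p_k$ and $\partial_{p_k}S=P\cdot\partial_{p_k}Q$, which themselves follow by differentiating $S=\int_0^t(\tfrac12\abs{P}^2-E(Q))\ud s$ under the integral sign and noticing that the resulting integrand is $\tfrac{\ud}{\ud s}(P\cdot\partial_{q_k}Q)$, resp.\ $\tfrac{\ud}{\ud s}(P\cdot\partial_{p_k}Q)$. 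Substituting these, three cancellations take place: $\partial_{q_k}S$ and $\partial_{p_k}S$ absorb the $-P\cdot\partial_{q_k}Q$ and $-P\cdot\partial_{p_k}Q$ coming from differentiating $P\cdot(x-Q)$ together with the $+p_k$ coming from $-p\cdot(y-q)$; and, crucially, the two terms $\mp i(y-q)_k$ produced by $\tfrac{i}{2}\abs{y-q}^2$ and by $-p\cdot(y-q)$ cancel between the $\partial_{q_k}$ and $-i\partial_{p_k}$ pieces. What remains is linear in $x-Q$:
\[
\partial_{z_k}\Phi=\bigl(\partial_{z_k}(P-iQ)\bigr)\cdot(x-Q)=-i\,Z_{kj}\,(x-Q)_j,
\]
using $Z=\partial_z(Q+iP)$ and Einstein's convention; inverting the (invertible) matrix $Z$ yields the master relation $(x-Q)_j=i\,Z_{jk}^{-1}\,\partial_{z_k}\Phi$.

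Next I would integrate by parts. Since $\partial_{z_k}e^{i\Phi/\veps}=\tfrac{i}{\veps}(\partial_{z_k}\Phi)e^{i\Phi/\veps}$, and since $\abs{e^{i\Phi/\veps}}$ carries the Gaussian weights $e^{-\abs{x-Q}^2/(2\veps)}e^{-\abs{y-q}^2/(2\veps)}$ which, together with the Schwartz hypotheses on $b$ and $M$ and standard polynomial-growth bounds on $Q,P,Z,Z^{-1}$, ensure integrability and the vanishing of boundary terms, integration by parts in $q_k$ and $p_k$ gives $g\,\partial_{z_k}\Phi\sim i\veps\,\partial_{z_k}g$ for any Schwartz $g=g(y,q,p)$. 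The first formula is then immediate from the master relation: $b\cdot(x-Q)=i\,b_j\,Z_{jk}^{-1}\,\partial_{z_k}\Phi\sim i\veps\,\partial_{z_k}(i\,b_j\,Z_{jk}^{-1})=-\veps\,\partial_{z_k}(b_j\,Z_{jk}^{-1})$. For the quadratic term I would replace one factor $x-Q$ via the master relation, integrate by parts once, and expand $\partial_{z_l}$ by the product rule: the piece landing on the surviving $(x-Q)_j$ produces $-\partial_{z_l}Q_j$ and hence the stated leading term $\veps\,\partial_{z_l}Q_j\,M_{jk}\,Z_{kl}^{-1}$, while the piece landing on the Schwartz coefficient is $\veps$ times an expression still linear in $x-Q$, which by the first formula is itself $\Or(\veps)$ and therefore contributes only $\Or(\veps^2)$. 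Finally, the estimate for $\abs{\alpha}\ge3$ follows by induction on $\abs{\alpha}$: one integration by parts turns $(x-Q)^\alpha$ into $\veps$ times a sum of a degree-$(\abs{\alpha}-1)$ monomial in $x-Q$ (derivative on $Z^{-1}$) and degree-$(\abs{\alpha}-2)$ monomials (derivative on an $x-Q$ factor), and iterating with the two base cases already in hand produces the claimed power of $\veps$.

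I expect the genuinely delicate step to be the first one---carrying out the differentiation of $\Phi$ and recognizing all the cancellations, in particular the role of the term $-p\cdot(y-q)$ in eliminating the $(y-q)$ contributions and the role of the action identities---since everything afterwards is routine integration by parts and bookkeeping. The only other point that needs care is justifying the manipulations rigorously, i.e.\ the integrability and boundary-term estimates and the nondegeneracy $\det Z\neq0$, which rest on the standard tame bounds for the FGA flow $(Q,P,S,Z)$ available under the usual regularity and growth assumptions on $E$.
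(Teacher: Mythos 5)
Your proposal is correct and follows essentially the same route as the proof the paper relies on (which it quotes from \cite{FGACMS}*{Lemma 3.2} and \cite{FGA_Conv}*{Lemma 5.2}): the exact identity $\partial_{z_k}\Phi = -i\,Z_{kj}(x-Q)_j$, obtained from the action identities $\partial_q S = P\cdot\partial_q Q - p$, $\partial_p S = P\cdot\partial_p Q$ and the cancellation of the $(y-q)$ terms, followed by integration by parts in $(q,p)$, with the quadratic and higher-order cases handled by applying the relation to one factor at a time and iterating. The supporting facts you flag (invertibility and boundedness of $Z^{-1}$, decay ensuring vanishing boundary terms) are exactly the standard FGA estimates invoked in those references, so no gap remains.
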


\subsection{\red{The integral representation for surface hopping}}\label{sec:fgash}

We now consider extending the \red{integral representation in
  the previous subsection} to the matrix Schr\"odinger equation by
incorporating the coupling of the two energy surfaces, \red{
  which is the basis of the FGA-SH algorithm.}

Let us assume, for simplicity of notation, that the initial condition
concentrates on energy surface $E_0$ (\textit{i.e.}, $u_1(0, x) = 0$
\red{and $u_0$ is non-zero}).  The extension to general
initial condition is straightforward as the equation is linear.
\red{We construct an approximation to the total wave function
  following the ansatz below.  We will prove rigorously that it gives
  an $\Or(\veps)$ approximation to the true solution; see the
  convergence statement in Section~\ref{sec:convergence}.  The
  integral representation here is fully deterministic, and our FGA-SH
  algorithm can be understood as a Monte Carlo algorithm for
  evaluation.}
\begin{equation}\label{ansatz}
  u_{\FGA}(t,x,r)= K^{(0)}_{00}(t,x,r)+ K^{(1)}_{01}(t,x,r)+K^{(2)}_{00}(t,x,r)+K^{(3)}_{01}(t,x,r)+\cdots
\end{equation}
where,
\begin{equation}\label{eq:defK}
K^{(l)}_{mn}(t,x,r)=\Psi_n(r;x) u_m^{(l)}(t,x)
\end{equation}
\red{represents the contribution to the ansatz by wave packets initiated at surface  $m$, ends at surface
  $n$, and switches the propagating surface $l$ times in between --- the meaning of which will become clear below.}
Thus, we can rewrite \eqref{ansatz} as
\[
u_{\FGA}(t,x,r)=\Psi_0(r; x) \left(u_0^{(0)}(t, x)+u_0^{(2)}(t, x)+\cdots\right) + \Psi_1(r; x) \left(u_0^{(1)}(t, x)+u_0^{(3)}(t, x)+\cdots\right).
\]
We will refer this as the surface hopping ansatz. \red{The
  idea of splitting the wave function in this way is similar to that
  used in the work by Wu and Herman \cites{WuHerman1,WuHerman2,
    WuHerman3}, which is also based on the frozen Gaussian
  approximation. The two approaches are different however in several
  essential ways, as we will explain in \S\ref{sec:literature}.}

\red{As we consider initial condition starting from the surface $0$,
  for simplicity of notation, we will drop the subscripts $0$ in
  $u_0^{(n)}$ for now. In the ansatz, $u^{(0)}$ consists of
  contribution from wave packets propagating only on energy surface
  $E_0$, without switching to $E_1$ surface.} It is given by the
ansatz of frozen Gaussian approximation \red{on a single surface} as
in Section \ref{sec:revfga}:
\begin{equation}\label{eq:u00}
  u^{(0)}(t, x) =\frac{1}{(2\pi\veps)^{3m/2}} \int   A^{(0)}(t,z_0) \exp \left( \frac{i}{\veps} \Theta^{(0)}(t,z_0,x) \right) \ud z_0,
\end{equation}
where we have used $z_0=(q_0,p_0)$ to denote phase space variables, 
\[
\Theta^{(0)}(t,q,p,x)=S^{(0)}(t,q,p)+P^{(0)}(t,q,p)\cdot \left( x-Q^{(0)} (t,q,p)\right) + \frac{i}{2} \left|x-Q^{(0)}(t,q,p)\right|^2,
\]
and
\begin{equation}\label{eq:c00}
A^{(0)}(t,q,p) = a^{(0)}(t,q,p)\int_{\R^m} u_0 (y) e^{\frac{i}{\veps} (-p\cdot(y-q)+ \frac{i}{2}|y-q|^2)} \ud y.
\end{equation}
Here, the evolution of the quantities $S^{(0)}$, $P^{(0)}$,
$Q^{(0)}$ and $A^{(0)}$ are determined by matched asymptotic and
will be specified below.  We will refer these quantities as \emph{FGA
  variables} in the sequel.

\red{For $n > 0$, the wave function $u^{(n)}$ counts for contribution of wave packets that switch between the two energy surfaces $n$ times. Given $n$, to specify the integral representation, let us denote 
$T_{n:1}=(t_n,\cdots,t_1)$ a sequence of times ordered backwardly, \textit{i.e.,} they satisfy
\[
0 \le t_1 \le t_2 \le \cdots \le t_n \le t.
\]}
 The ansatz for $u^{(n)}$
is given by
\begin{multline}\label{eq:u0n}
  u^{(n)}(t, x)  = \frac{1}{(2\pi\veps)^{3m/2}} \int \ud z_0 \int_0^t \ud t_n \int_0^{t_n} \ud t_{n-1} \cdots \int_0^{t_2} \ud t_1 \; \tau^{(1)}(T_{1:1}, z_0)\cdots \tau^{(n)} (T_{n:1},z_0) \times \\ 
\times A^{(n)}(t, T_{n:1}, z_0) \exp\left( \frac{i}{\veps}
    \Theta^{(n)}(t,T_{n:1}, z_0, x) \right) 
\end{multline}
where
\red{
\[
\Theta^{(n)}(t,T_{n:1}, z_0,x)=S^{(n)}(t,T_{n:1}, z_0)+P^{(n)}(t,T_{n:1}, z_0)\cdot \left( x-Q^{(n)} (t,T_{n:1}, z_0)\right) + \frac{i}{2} \left|x-Q^{(n)}(t,T_{n:1}, z_0)\right|^2,
\]
and}
\[
A^{(n)}(t, T_{n:1}, z_0)  =a^{(n)}(t, T_{n:1}, z_0)\int_{\R^m} u_0 (y) e^{\frac{i}{\veps} (-p\cdot(y-q)+ \frac{i}{2}|y-q|^2)} \ud y. 
\]
To simplify the notation, we will often write \eqref{eq:u0n} as
\begin{equation}
  u^{(n)}(t, x) =  \frac{1}{(2\pi\veps)^{3m/2}} \int \ud z_0  \int_{0 \le t_1 \le\cdots \le t_n \le t} \tau^{(1)}\cdots\tau^{(n)} \; A^{(n)} \; \exp\left( \frac{i}{\veps}
    \Theta^{(n)}\right) \ud T_{n:1},
\end{equation}
where $\ud T_{n:1}=\ud t_1 \cdots \ud t_n$. \red{Note that in
\eqref{eq:u0n}, we integrate over all possible sequences of $n$ ordered times
in the time interval $[0, t]$.}

\red{Note that given the time sequence $T_{n:1}$,
  \eqref{eq:u0n} depends on the FGA variables
  $S^{(n)}, P^{(n)}, Q^{(n)}, A^{(n)}$, and also $\tau^{(k)}$ for
  $k = 1, \ldots, n$. We will refer $\tau^{(k)}$ as the \emph{hopping
    coefficients}, since they are related to the jumping intensity of
  our stochastic algorithm. Note that as other FGA variables,
  $\tau^{(k)}(T_{k:1},z_0)$ depend on the time sequence $T_{k:1}$ and
  $z_0$.}

Let us now specify the evolution equations for the FGA variables and hopping coefficients involved in \eqref{eq:u00} and \eqref{eq:u0n} \red{to complete the integral representation}. The asymptotic derivation of these equations will be given in Section~\ref{sec:asymptotic}.

\red{Recall that for $n = 0$, the FGA trajectory evolves on a
  single energy surface $E_0$. For $n > 0$, the trajectory will switch
  between the two surfaces at given time sequences $T_{n:1}$. More
  precisely, $T_{n:1} = (t_n, t_{n-1}, \ldots, t_1)$ determines a
  partition of the time interval $[0,t]$.} Each FGA variable evolves
piecewisely in time on alternating energy surfaces, starting on energy
surface $0$ (due to our assumption of the initial condition). For
convenience, we take the convention $t_0 = 0$ and $t_{n+1} = t$ in the
following.

\red{When $t\in [t_k,t_{k+1})$ for $k$ being an even integer,
  all the FGA variables evolve on energy surface $l^{(k)}=0$, and for
  $k$ odd, the trajectory evolves on energy surface $l^{(k)}=1$.  The
  evolution equations are given accordingly as
\begin{subequations}\label{eq:evenevolve}
\begin{align}
\frac{\ud}{\ud t} Q^{(k)} & = P^{(k)},  \\
\frac{\ud}{\ud t} P^{(k) }& =- \nabla E_{l^{(k)}} (Q^{(k)}),\\
\frac{\ud}{\ud t} S^{(k)} &=\frac{1}{2} ( P^{(k)} )^2 - E_{l^{(k)}} (Q^{(k)}),\\ 
\frac{\ud}{\ud t} A^{(k)} & =\frac 1 2  A^{(k)} \tr\left( (Z^{(k)})^{-1}\left(\partial_z P^{(k)} - i \partial_z Q^{(k)}  \nabla^2_Q E_{l^{(k)}}(Q^{(k)}) \right) \right) - A^{(k)} d_{l^{(k)}l^{(k)}}\cdot P^{(k)}.
\end{align}
\end{subequations}
We observe that the evolution equations \eqref{eq:evenevolve}  are similar to those in the single surface case. This connection will become more clear in our asymptotic derivation later in Section~\ref{sec:asymptotic}.
}

The crucial difference with the single surface case is that the
trajectory now switches between the two energy surfaces. At time $t = t_k$ for $1 \le k \le n$, the trajectory switches
from one energy surface to the other. The FGA variables are continuous in time
\begin{subequations}\label{eq:contcond}
\begin{align} 
  A^{(k)}(t_{k}, T_{k:1},z_0) = A^{(k-1)}(t_{k},T_{k-1:1},z_0), \\
  S^{(k)}(t_{k}, T_{k:1},z_0) = S^{(k-1)}(t_{k},T_{k-1:1},z_0), \\
  P^{(k)}(t_{k}, T_{k:1},z_0) = P^{(k-1)}(t_{k},T_{k-1:1},z_0), \\
  Q^{(k)}(t_{k}, T_{k:1},z_0) = Q^{(k-1)}(t_{k},T_{k-1:1},z_0), 
\end{align}
\end{subequations}
such that the left hand sides serve as the initial conditions for the
evolution equations during the next time interval $[t_k, t_{k+1})$.
The FGA trajectory \red{for two energy surfaces} is thus defined on the
extended phase space $\R^{2m} \times \{0, 1\}$, 
the piecewise Hamiltonian dynamics on each energy surface, and the
\red{continuity} condition \eqref{eq:contcond}.  
Finally, the hopping coefficient $\tau^{(k)}$ is given by
\begin{equation}\label{eq:taudef}
\tau^{(k)}(T_{k:1}, z_0)=
\begin{cases} 
  -P^{(k)}(T_{k:1}, z_0) \cdot \red{d_{01}}\bigl(Q^{(k)}(\red{t_k,}T_{k:1}, z_0)\bigr), & k \text{ even}; \\
  -P^{(k)}(T_{k:1}, z_0) \cdot \red{d_{10}}\bigl(Q^{(k)}(\red{t_k,}T_{k:1}, z_0)\bigr), & k \text{ odd}.  \\
\end{cases}
\end{equation}
It is worth remarking that, $\tau^{(k)}(T_{k:1}, z_0)$ is complex valued in general, and therefore, we will later choose its modulus as the jumpinp intensity in the probabilistic interpretation of the ansatz in Section~\ref{probab}.

\section{\red{Frozen Gaussian approximation with surface hopping as a stochastic interpretation}}\label{sec:prob}

We have seen in Section~\ref{sec:fgash}, the \red{surface hopping
  ansatz} is a sum of contributions involving integration on the phase
space and of all possible sequence of ordered times.  Since the phase
space could be of high dimension in chemical applications and number
of time sequence grows factorially fast \red{with respect to $n$}, the
direct discretization of the integral does not give a \red{practical} algorithm. Observe that essentially we have a high dimension
integral to deal with, and hence it is natural to look for stochastic
methods (in analogy to Monte Carlo method for quadrature). Motivated
by this, in this section, we will present a stochastic representation
of the \red{surface hopping ansatz}, which can be used to numerically approximate the
\red{solution to the Schr\"odinger equations}. The resulting algorithm bares similarity to the surface
hopping algorithm developed in the chemistry literature, which will be
elaborated in Section \ref{sec:literature}.

\subsection{Probabilistic interpretation of FGA for single surface} 

Before we consider the frozen Gaussian approximation with surface
hopping, let us start with the usual FGA on a single surface. Recall that the ansatz is given in this case by 
\begin{equation}\label{FGA_single}
  \begin{aligned}
    u_{\FGA} (t,x) & =\frac{1}{(2\pi\veps)^{3m/2}} \int_{\R^{2m}}  \ud z_0 \; A(t, z_0) \exp \Bigl( \frac{i}{\veps} \Theta(t,z_0,x) \Bigr) \\
    & =\frac{1}{(2\pi\veps)^{3m/2}} \int_{\R^{2m}} \ud z_0 \Abs{A(0,
      z_0)} \; \frac{A(t, z_0)}{\Abs{A(0, z_0)}} \exp \Bigl(
    \frac{i}{\veps} \Theta(t,z_0,x) \Bigr), 
\end{aligned}
\end{equation}
\red{and from  Lemma~\ref{lem:initial} that }
\begin{equation}
  u_0(x) = \frac{1}{(2\pi\veps)^{3m/2}} \int_{\R^{2m}} \ud z_0 \; A(0, z_0) 
  \exp \Bigl( \frac{i}{\veps} \Theta(0,z_0,x) \Bigr). 
\end{equation}

Assuming that $A(0,z_0)$ is an integrable function in $\R^{2m}$,
\textit{i.e.},
\begin{equation*}
\int_{\R^{2m}} \ud z_0 |A(0,z_0)| < \infty, 
\end{equation*}
we can define a probability measure $\mathbb{P}_0$ on $\R^{2m}$ such
that
\begin{equation}\label{eq:P0single}
  \mathbb{P}_0(\Omega) = \mathcal{Z}^{-1}  \frac{1}{(2\pi\veps)^{3m/2}} \int_{\Omega} \ud z_0 \Abs{A(0,z_0)}
\end{equation}
for any $\Omega \subset \R^{2m}$, where $\mathcal{Z} = \frac{1}{(2\pi\veps)^{3m/2}} \int_{\R^{2m}} \ud z_0 \Abs{A(0,z_0)}$ is a normalization factor so that $\mathbb{P}_0$ is a probability measure. Note that in general $A(0, z_0)$ is complex valued, and hence the necessity in taking the modulus in the definition \eqref{eq:P0single}. We can thus rewrite 
\begin{equation}\label{eq:FGAprobsingle}
  \begin{aligned}
    u_{\FGA}(t,x) & =  \mathcal{Z} \int \mathbb{P}_0(\ud z_0) \; \frac{A(t, z_0)}{\Abs{A(0,z_0)}} \exp \Bigl( \frac{i}{\veps} \Theta(t,z_0,x) \Bigr) \\
    & = \mathcal{Z} \mathbb{E}_{z_0} \Biggl[ \frac{A(t, z_0)}{\Abs{A(0,z_0)}} \exp \Bigl( \frac{i}{\veps} \Theta(t,z_0,x) \Bigr) \Biggr], 
  \end{aligned}
\end{equation}
where the expectation is taken with respect to $\mathbb{P}_0$. Thus, we may use a Monte Carlo sampling for $u_{\FGA}(t, x)$ as 
\begin{equation}
  u_{\FGA}(t, x) \approx \frac{\mathcal{Z}}{M} \sum_{i=1}^M \frac{A(t, z_0^{(i)})}{\bigl\lvert A(0,z_0^{(i)}) \bigr\rvert} \exp \Bigl( \frac{i}{\veps} \Theta(t,z_0^{(i)},x) \Bigr), 
\end{equation}
where $\{z_0^{(i)}\}_{i = 1, \ldots, M} \subset \R^{2m}$ are
independent identically distributed samples from the probability
measure $\mathbb{P}_0$. Algorithmically, once $z_0^{(i)}$ is sampled,
we evolve the FGA variables $Q, P, A, S$ up to time $t$, which gives
the value of the integrand. Denote $z_t = (Q_t, P_t)$ for the FGA
trajectory, so that $z_t$ satisfies the Hamiltonian flow with
Hamiltonian $h(q, p)$:
\[
\ud z_t = (h_p,-h_q) \ud t.
\]
The trajectory $z_t$ corresponds to a one-to-one map on the phase
space: $z_0 \mapsto z_t$. As the trajectory is deterministic once the
initial point $z_0$ is prescribed, we can equivalently view the
expectation over initial condition in \eqref{eq:FGAprobsingle} as
expectation over \red{ensemble of} trajectories $z_t$; this point of view is
useful for the extension to cases with surface hopping.

\red{In summary,} in the single surface case, the FGA ansatz can be evaluated by a
stochastic approximation where the randomness comes from sampling of
initial points of the FGA trajectory. 

\subsection{Probabilistic interpretation for FGA with surface hopping} \label{probab} 

We now extend the probabilistic interpretation to the cases with
surface hopping. Since the FGA trajectory in this case depends on the
energy surface on which it evolves, to prescribe a trajectory, we need
to also keep track of the energy surface. Thus, the phase space
extends to $\wt{z}_t = (z_t, l_t) \in \R^{2m} \times \{0, 1\}$, where $l_t$
indicates the energy surface that the trajectory is on at time $t$. 

To take into account the possible hopping times, we will construct a
stochastic process for $\wt{z}_t$, in consistency with the ansatz we
have. The evolution of $z_t$ is deterministic on the energy surface that
$l_t$ indicates, given by the corresponding Hamiltonian flow: 
\begin{equation}\label{eq:drift}
\ud z_t= \bigl( p_t, -\nabla_{q} E_{l_t}(p_t, q_t) \bigr)  \ud t. 
\end{equation}
This is coupled with a Markov jump process of $l_t$ which is c\`adl\`ag and hops between $0$ and $1$, with infinitesimal transition rate  
\begin{equation}
\mathbb{P}\bigl(l_{t+ \delta t}=m \mid l_t=n, \,z_t = z \bigr) = \delta_{nm} + \lambda_{nm}(z) \delta t + o(\delta t) 
\end{equation}
for $m, n \in \{0, 1\}$, where the rate matrix is given by 
\begin{equation}
  \lambda(z) = 
  \begin{pmatrix}
    \lambda_{00}(z) & \lambda_{01}(z) \\
    \lambda_{10}(z) & \lambda_{11}(z) 
  \end{pmatrix} =
  \begin{pmatrix}
    - \Abs{p \cdot \red{d_{10}}(q)} & \Abs{p \cdot \red{d_{10}}(q)} \\
    \Abs{p \cdot \red{d_{01}}(q)} & - \Abs{p \cdot \red{ d_{01}}(q)}
  \end{pmatrix}.
\end{equation}
\red{Note that $\lambda_{01}(z)$ corresponds to the infinitesimal rate from surface $0$ to $1$, and thus it is given by $\Abs{ p \cdot d_{10}(q)}$.}
We remark $p \cdot d_{10}(q)$ is in general complex, and hence we take
its modulus in the rate matrix; also note that the rate is state
dependent (on $z$). The $\wt{z}_t$ is thus a Markov switching
process. Equivalently, denote the probability distribution on the
extended phase space at time $t$ by $F_t(z, l)$, the corresponding
forward Kolmogorov equation is given by
\begin{equation}
  \frac{\partial}{\partial t} 
  F_t(z, l) + 
  \bigl\{ h_l, F_t(z, l) \bigr\} = \sum_{m = 0}^1 \lambda_{ml}(z) F_t(z, m),  
\end{equation}
where $\{\cdot, \cdot\}$ stands for the Poisson bracket corresponding to the Hamiltonian dynamics \eqref{eq:drift}, 
\begin{equation*}
  \bigl\{h, F\bigr\} =  \partial_p h \cdot \partial_q F - \partial_q h \cdot \partial_p F. 
\end{equation*}

Given a time interval $[0, t]$, thanks to \eqref{eq:drift}, the $z_s$
part of the trajectory $\wt{z}_s = (z_s, l_s)$ is continuous and
piecewise differentiable, while $l_s$ is piecewise constant with
almost surely finite many jumps. Given a realization of the trajectory $\wt{z}_s = (z_s, l_s)$ starting from $\wt{z}_0 = (z_0, 0)$,\footnote{Generalization to initial condition starting from both energy surface is straightforward.} we denote by $n$ the number of jumps $l_s$ has (thus $n$ is a random variable) and also the discontinuity set of $l_s$ as $\bigl\{t_1, \cdots, t_n\bigr\}$, which is an increasingly ordered random sequence. By the properties of the associated
counting process, the probability that there is no jump $(n = 0)$ is
given by
\begin{equation}
  \mathbb{P}(n = 0) = \exp\Biggl(-\int_0^t \lambda_{01}(z_s) \ud s\Biggr)
  = \exp\Biggl(-\int_0^t \Abs{\tau^{(1)}(s, z_0)} \ud s \Biggr), 
\end{equation}
where $\tau^{(1)}$ is defined in \eqref{eq:taudef} the hopping coefficient in the ansatz of FGA with surface hopping. Similarly, the probability with one jump $(n=1)$ is given by 
\begin{equation}
  \mathbb{P}(n = 1) = \int_0^t \ud t_1 \; \Abs{\tau^{(1)}(t_1, z_0)} \exp\Biggl( - \int_0^{t_1} \Abs{\tau^{(1)}(s, z_0)} \ud s \Biggr) \exp\Biggl( - \int_{t_1}^{t} \Abs{\tau^{(2)}(s, T_{1:1}, z_0)} \ud s \Biggr). 
\end{equation}
In addition, conditioning on $n = 1$, the hopping time is distributed with probability density 
\begin{equation}
  \varrho_1(t_1) \propto \Abs{\tau^{(1)}(t_1, z_0)} \exp\Biggl( - \int_0^{t_1} \Abs{\tau^{(1)}(s, z_0)} \ud s \Biggr) \exp\Biggl( - \int_{t_1}^{t} \Abs{\tau^{(2)}(s, T_{1:1}, z_0)} \ud s \Biggr). 
\end{equation}
More generally, we have 
\begin{multline}\label{eq:Pnk}
  \mathbb{P}(n = k) = \int_{0<t_1<\cdots<t_k<t} \ud T_{k:1} \; \prod_{j=1}^k \Abs{\tau^{(j)}(T_{j:1}, z_0)} \\
  \times \exp\Biggl(-\int_{t_k}^t \Abs{\tau^{(k+1)}(s, T_{k:1}, z_0)} \ud s  \Biggr) \prod_{j=1}^{k} \exp\Biggl( - \int_{t_{j-1}}^{t_j} \Abs{\tau^{(j)}(s, T_{j-1:1}, z_0)} \ud s \Biggr), 
\end{multline}
and the probability density of $(t_1, \cdots, t_k)$ given there are $k$ jumps in total is 
\begin{equation}\label{eq:varrhok}
  \varrho_k(t_1, \cdots, t_k) \propto
  \begin{cases}
    \begin{aligned}
      & \prod_{j=1}^k \Abs{\tau^{(j)}(T_{j:1}, z_0)} \exp\Biggl(-\int_{t_k}^t \Abs{\tau^{(k+1)}(s, T_{k:1}, z_0)} \ud s  \Biggr) \\
    & \qquad \qquad \times \prod_{j=1}^{k} \exp\Biggl( - \int_{t_{j-1}}^{t_j} \Abs{\tau^{(j)}(s, T_{j-1:1}, z_0)} \ud s \Biggr),
    \end{aligned} & \text{if } t_1 \le t_2 \le \cdots \le t_k; \\
    0, & \text{otherwise}.
  \end{cases}
\end{equation}
We remark that the complicated expressions are due to the fact that the intensity function $\lambda(z)$ of the jumping process depends on the current state variable $z$, and thus depends on the previous hopping times. These formula reduce to the usual familiar expressions for homogeneous Poisson process if the intensity is uniform. 

Let us now consider a \red{path integral that takes average over the ensemble of trajectories}
\begin{multline}\label{eq:trajavg}
  \wt{u}(t, x, r) = \mathcal{Z} \mathbb{E}_{\wt{z}_t} \Biggl[\Psi_{n\bmod 2}(r; x)
   \biggl( \prod_{k=1}^n \frac{\tau^{(k)}(T_{k:1}, z_0)}{\Abs{\tau^{(k)}(T_{k:1}, z_0)}} \biggr) \frac{A^{(n)}(t, T_{n:1}, z_0)}{\Abs{A^{(0)}(0, z_0)}} \exp\Bigl(\frac{i}{\veps} \Theta^{(n)}(t, T_{n:1}, z_0, x)\Bigr) \\
  \times \exp\Biggl( \int_{t_n}^t \Abs{\tau^{(n+1)}(s, T_{n:1}, z_0)} \ud s  \Biggr) \prod_{k=1}^{n} \exp\Biggl( \int_{t_{k-1}}^{t_k} \Abs{\tau^{(k)}(s, T_{k-1:1}, z_0)} \ud s \Biggr) \Biggr], 
\end{multline}
where the initial condition $z_0$ is sampled from $\mathbb{P}_0$ with probability density on $\R^{2m}$ proportional to $\abs{A^{(0)}(0, z_0)}$ and
$\mathcal{Z}$ is a normalization factor (assuming integrability of $A^{(0)}(0, z_0)$ as before)
\begin{equation}
  \mathcal{Z} = \frac{1}{(2\pi\veps)^{3m/2}} \int_{\R^{2m}} \Abs{A^{(0)}(0, z_0)} \ud z_0. 
\end{equation}
Here, the terms on the second line of \eqref{eq:trajavg}, namely
\[
\exp\Biggl( \int_{t_n}^t \Abs{\tau^{(n+1)}(s, T_{n:1}, z_0)} \ud s  \Biggr) \prod_{k=1}^{n} \exp\Biggl( \int_{t_{k-1}}^{t_k} \Abs{\tau^{(k)}(s, T_{k-1:1}, z_0)} \ud s \Biggr) 
\]
are the weighting terms due to the non-homogeneous state dependent
Poisson process.  Note that the whole term inside the square bracket
in \eqref{eq:trajavg} is determined by the trajectory $\wt{z}_t$, and
thus can be viewed as a functional (with fixed $t$ and $x$) evaluated
on the trajectory.  We now show that \eqref{eq:trajavg} is in fact a
stochastic representation of the \red{FGA surface hopping ansatz given in \S\ref{sec:fgash}}, and hence we obtain an
asymptotically \red{convergent} path integral representation of the
semiclassical matrix Schr\"odinger equation. By the choice of the
initial condition, we have
\begin{multline*}
  \wt{u}(t, x, r) = \frac{1}{(2\pi\veps)^{3m/2}} \int_{\R^{2m}} \ud z_0 \; \mathbb{E}_{\wt{z}_t} \Psi_{n\bmod 2}(r; x)
  \Biggl[ \biggl( \prod_{k=1}^n \frac{\tau^{(k)}(T_{k:1}, z_0)}{\Abs{\tau^{(k)}(T_{k:1}, z_0)}} \biggr) A^{(n)}(t, T_{n:1}, z_0) \exp\Bigl(\frac{i}{\veps} \Theta^{(n)}(t, T_{n:1}, z_0, x)\Bigr) \\
  \times \exp\Biggl( \int_{t_n}^t \Abs{\tau^{(n+1)}(s, T_{n:1}, z_0)} \ud s  \Biggr) \prod_{k=1}^{n} \exp\Biggl( \int_{t_{k-1}}^{t_k} \Abs{\tau^{(k)}(s, T_{k-1:1}, z_0)} \ud s \Biggr) \;\Bigg\vert\; \wt{z}_t = (z_0, 0) \Biggr]. 
\end{multline*}
Since the randomness of the trajectory given initial condition only lies in the hopping times, we further calculate
\begin{equation*}
  \begin{aligned}
    \wt{u}(t, x, r) & = \frac{1}{(2\pi\veps)^{3m/2}} \int_{\R^{2m}} \ud z_0
    \; \sum_{n=0}^{\infty} \mathbb{P}(n) \Psi_{n\bmod 2}(r; x)
    \int_{([0, t])^{n}} \varrho_n(\ud t_1 \cdots \ud t_n)
    \Biggl[ \biggl( \prod_{k=1}^n \frac{\tau^{(k)}(T_{k:1}, z_0)}{\Abs{\tau^{(k)}(T_{k:1}, z_0)}} \biggr) A^{(n)}(t, T_{n:1}, z_0)\\
    & \qquad \times \exp\Bigl(\frac{i}{\veps} \Theta^{(n)}(t, T_{n:1},
    z_0, x)\Bigr) \exp\Biggl( \int_{t_n}^t \Abs{\tau^{(n+1)}(s,
      T_{n:1}, z_0)} \ud s \Biggr) \prod_{k=1}^{n} \exp\Biggl(
    \int_{t_{k-1}}^{t_k} \Abs{\tau^{(k)}(s, T_{k-1:1}, z_0)} \ud s
    \Biggr)  \Biggr] \\
    & = \frac{1}{(2\pi\veps)^{3m/2}} \int_{\R^{2m}} \ud z_0 \;
    \sum_{n=0}^{\infty} \Psi_{n\bmod 2}(r; x)\int_{0<t_1<\cdots<t_n<t} d T_{n:1}
    \; \prod_{k=1}^n \tau^{(k)}(T_{k:1}, z_0) A^{(n)}(t, T_{n:1}, z_0)\\
    & \hspace{27em} \times \exp\Bigl(\frac{i}{\veps} \Theta^{(n)}(t, T_{n:1}, z_0, x)\Bigr) \\
    & = u_{\FGA}(t, x, r),
  \end{aligned}
\end{equation*}
where the second equality follows from \eqref{eq:Pnk} and \eqref{eq:varrhok}.
The above calculation assumes the summability of the terms in the FGA ansatz, which will be rigorously proved in Section~\ref{sec:absconv}.

\subsection{Connection to surface hopping algorithms}\label{sec:literature}


\red{As we have shown in Section~\ref{probab}, the
  surface hopping ansatzis equivalent to a path integral
  representation given in \eqref{eq:trajavg} based on averaging over
  an ensemble of trajectories, the FGA-SH algorithm is a natural Monte
  Carlo sampling scheme. The FGA-SH algorithm consists of steps of
  sampling the initial points of the trajectory $(0,\,z_0)$,
  numerically integrating the trajectories until the prescribed time
  $t$, and finally evaluating the empirical average to obtain an
  approximation to the solution. Detailed description of the algorithm
  and numerical tests will be presented in
  Section~\ref{sec:numerics}.}

This is a good place to connect to and compare with the surface
hopping algorithms in the chemistry literature. Our algorithm is based
on the stochastic process $\wt{z}_t$ which hops between two energy
surfaces, and thus it is very similar in spirit to the fewest switches
surface hopping and related algorithms. However, the jumping intensity
of $l_t$ is very different from what is used in the FSSH algorithm; in
fact, the hopping in FSSH is determined by an auxiliary ODE for the
evolution of ``population'' on the two surfaces \cite{Tully}. It is
not yet clear to us how such an ODE arises from the Schr\"odinger
equation. \red{On the other hand, given the trajectories produced as
  in FSSH, one could in fact re-weight those to calculate the path
  integral \eqref{eq:trajavg} which might correspond to an importance
  sampling scheme. This connection would be left for future explorations.}

Another major difference with the surface hopping algorithms proposed
in the chemistry literature is that the trajectory $\wt{z}_t$ is
continuous in time on the phase space, while in FSSH and other version
of surface hopping, a momentum shift is introduced to conserve the
classical energy along the trajectory when hopping occurs (if hopping
occurs from energy surface $0$ to $1$, it is required that $h_0(p, q)
= h_1(p', q)$ where $p'$ is the momentum after hopping). Note that as
in the FGA for single surface Schr\"odinger equation, each Gaussian
evolved in the FGA with surface hopping does not solve the matrix
Schr\"odinger equation, and only the average of trajectories gives an
approximation to the solution. Therefore, it is not necessary for each
trajectory to conserve the classical energy. The methods in the
chemistry literature perhaps over-emphasize the energy conservation of
a single trajectory. 

Also, Tully's fewest switches surface hopping algorithm only
calculates the trajectory, without giving an approximation to the wave
function. It is perhaps more like Heller's frozen Gaussian packet
\cite{Heller1} for single surface Schr\"odinger equation, which
compared to the ensemble view point of the Herman-Kluk propagator,
considers instead the evolution of a single Gaussian packet and
captures the correct semiclassical trajectory. The better
understanding of trajectory dynamics in FSSH is an interesting future
direction.

We emphasize that while an ensemble of trajectory is often used for
the surface hopping algorithm, it is rather unclear what the ensemble
average really means in the chemistry literature. There are in fact
debates on the interpretation of the surface hopping trajectories. Our
understanding on the path integral representation clarifies the
average of trajectories and hopefully will shed new light on further
development of the surface hopping algorithms.

Let us also point out that, as far as we have seen, the chemistry
literature seems to miss the weighting terms in \eqref{eq:trajavg}, 
resulting from the non-homogeneous state dependent Poisson process. As
the hopping rules for the surface hopping algorithm all have the
similar feature, this correction factor is very important. In fact,  the approximation is far
off without the correction factors in our numerical tests.

As we already mentioned before, the ansatz we used share some
similarity with those proposed by Wu and Herman in
\cites{WuHerman1,WuHerman2, WuHerman3}, in particular, the total wave
function is also split into a series of wave functions based on the
number of hoppings. However, they are crucially different in many
ways: Whether the trajectory is continuous in the phase space, whether
the weighting terms as in \eqref{eq:trajavg} is included in the average of trajectories, and the work \cites{WuHerman1,WuHerman2,
  WuHerman3} also employs some stationary phase argument, etc. While
we will provide a rigorous proof of the approximation error of our
methods, it is not clear to us that the heuristic asymptotics in
\cites{WuHerman1,WuHerman2, WuHerman3} can be rigorously justified.

\section{Asymptotic derivation}\label{sec:asymptotic}

We present in this section the asymptotic derivation of the FGA with
surface hopping \red{ansatz presented in \S\ref{sec:fgash}.} To
determine the equations for all the variables involved, we substitute
$u_{\FGA}$ into the Schr\"odinger equation
\eqref{SE2}\footnote{Alternatively, one can directly work with the
  matrix Schr\"odinger equation \eqref{vSE}, which will be in fact
  adopted in our proof in Section~\ref{sec:proof}. We present both
  view points as both are often used in the literature.}  and carry
out a matched asymptotics expansion. While the calculation in this
section is formal, the approximation error will be rigorously
\red{controlled} in Section~\ref{sec:convergence}.

We start by examining the term $(i \veps \partial_t - H)
K^{(0)}_{00}$.  By definition \eqref{eq:defK}, we have
\begin{align*}
  i \veps \partial_t  K^{(0)}_{00}  & = i \veps \Psi_0 \, \partial_t u^{(0)},  \\
\intertext{and}
H  K^{(0)}_{00}  & = \left( -\frac{\veps^2}{2} \Delta_x + H_e \right) \Psi_0 u^{(0)} \\
 & = -\frac{\veps^2}{2} \Delta_x \left(  \Psi_0 u^{(0)} \right) +  E_0 \Psi_0 u^{(0)} \\
 & = \Psi_0 H_0 u^{(0)} - \veps^2 \nabla_x \Psi_0 \cdot \nabla_x u^{(0)} +  \left( -\frac{\veps^2}{2} \Delta_x \Psi_0 \right) u^{(0)},
\end{align*}
where we have used the notation $H_i = -\frac{\veps^2}{2}\Delta_x +
E_i$ for $i = 0, 1$. Expand the term $\nabla_x \Psi_0$ in the
adiabatic basis $\{ \Psi_k\}_{k=0,1}$ (recall that we have assumed
only two adiabatic basis functions are important):
\[
\nabla_x \Psi_0 = d_{00}  \Psi_0 + \red{d_{10} }\Psi_1,  
\]
where we recall that
$\red{d_{nm}(x)= \langle \Psi_n, \nabla_x \Psi_m \rangle}$, and
we thus obtain the expansion of $H K^{(0)}_{00}$ in terms of the
adiabatic basis functions
\begin{equation}\label{eq:expanu0}
H  K^{(0)}_{00} = \Psi_0 H_0 u^{(0)} - \veps^2 \Psi_0 d_{00} \cdot  \nabla u^{(0)} - \veps^2 \Psi_1\red{ d_{10}} \cdot \nabla u^{(0)} + \Or(\veps^2), 
\end{equation}
where we have omitted the contribution from
$(-\frac{\veps^2}{2} \Delta_x \Psi_0) u^{(0)}$ which is of order
$\Or(\veps^2)$ \red{(note that the terms like
  $\veps^2 \Psi_0 d_{00}\cdot \nabla u^{(0)}$ is $\Or(\veps)$ instead
  of $\Or(\veps^2)$ due to the oscillation in $u^{(0)}$)}. We see that
the first two terms in \eqref{eq:expanu0} lie in the space spanned by
$\Psi_0$, while the third term is orthogonal. Hence, it is impossible
to construct $u^{(0)}$ to satisfy equation \eqref{eq:expanu0} to the
order of $\Or(\veps)$. In fact, the term
$- \veps^2 \Psi_1 \red{ d_{10} }\cdot \nabla u^{(0)}$ has to be
canceled by terms from $(i \veps \partial_t - H) K^{(1)}_{01}$, since
$\Psi_1$ corresponds to the other energy surface. This explains the
necessity of the surface hopping ansatz.

Let us thus first try to construct $u^{(0)}$ such that 
\begin{equation}\label{eq:singleu0}
 i \veps \partial_t u^{(0)} =H_0 u^{(0)} - \veps^2 d_{00}\cdot \nabla_x u^{(0)} + \Or(\veps^2).
\end{equation}
Note that this is very similar to the situation of the original frozen Gaussian approximation for the single surface Schr\"odinger equation. 
By direct calculation, we get
\begin{align*}
  i \veps \partial_t u^{(0)} & = \frac{i \veps }{(2\pi\veps)^{3m/2}}  \int \ud z_0 \; \partial_t \left[ A^{(0)}(t, z_0) \exp \Bigl( \frac{i}{\veps} \Theta^{(0)}(t,z_0,x) \Bigr)  \right] \\
                             & =  \frac{i\veps }{(2\pi\veps)^{3m/2}}  \int \ud z_0 \;  \partial_t A^{(0)}(t, z_0)  \exp \Bigl( \frac{i}{\veps} \Theta^{(0)}(t,z_0,x) \Bigr)  \\
                             & \qquad -  \frac{1}{(2\pi\veps)^{3m/2}}  \int \ud z_0 \;  A^{(0)}(t, z_0) \Bigl( \partial_t S_0^{(0)}(t, z_0) + \partial_t P_0^{(0)}(t, z_0) \cdot (x - Q_0^{(0)}(t, z_0)) \\
                             & \hspace{17em} - \partial_t Q_0^{(0)}(t, z_0) \cdot \bigl( P_0^{(0)}(t, z_0) + i (x - Q_0^{(0)}(t, z_0)) \bigr) \Bigr)     \exp \Bigl( \frac{i}{\veps} \Theta^{(0)}(t,z_0,x) \Bigr)\\
                             & =  \frac{i\veps }{(2\pi\veps)^{3m/2}}  \int \ud z_0 \;  \partial_t A^{(0)}(t, z_0)  \exp \Bigl( \frac{i}{\veps} \Theta^{(0)}(t,z_0,x) \Bigr)  \\
                             & \qquad -  \frac{1}{(2\pi\veps)^{3m/2}}  \int \ud z_0 \;  A^{(0)}(t, z_0) \Bigl( \partial_t S_0^{(0)}(t, z_0) - \nabla_Q E_0(Q_0^{(0)}(t, z_0)) \cdot (x - Q_0^{(0)}(t, z_0)) \\
                             & \hspace{18em} - P_0^{(0)}(t, z_0) \cdot \bigl( P_0^{(0)}(t, z_0) + i (x - Q_0^{(0)}(t, z_0)) \bigr) \Bigr)     \exp \Bigl( \frac{i}{\veps} \Theta^{(0)}(t,z_0,x) \Bigr).
\end{align*}
Moreover, 
\begin{align*}
  \veps^2 \nabla_x u^{(0)} & =  \frac{i \veps}{(2\pi\veps)^{3m/2}} \int  \ud z_0 \;  A^{(0)}(t, z_0) \bigl(  P^{(0)}(t, z_0) +i (x-Q^{(0)}(t, z_0)) \bigr)  \exp \Bigl( \frac{i}{\veps} \Theta^{(0)}(t, z_0, x) \Bigr), \\
  - \frac{\veps^2}{2} \Delta_x u^{(0)} & =  \frac{m\veps}{2} \frac{1}{(2\pi\veps)^{3m/2}} \int  \ud z_0 \;  A^{(0)}(t, z_0) \exp \Bigl( \frac{i}{\veps} \Theta^{(0)}(t, z_0, x) \Bigr) \\
  & \qquad + \frac{1}{2} \frac{1}{(2\pi\veps)^{3m/2}} \int  \ud z_0 \;  A^{(0)}(t, z_0) \bigl\lvert  P^{(0)}(t, z_0) +i (x-Q^{(0)}(t, z_0)) \bigr\rvert^2  \exp \Bigl( \frac{i}{\veps} \Theta^{(0)}(t, z_0, x) \Bigr).
\end{align*}
Suggested by the semiclassical limit of the single surface case, we
have imposed that $(Q^{(0)}, P^{(0)})$ follows the Hamiltonian flow
with classical Hamiltonian $h_0(q,p) = \frac{1}{2} \abs{p}^2 +
E_0(q)$, namely,
\begin{subequations}
  \begin{align}
    \frac{\ud Q^{(0)}}{\ud t} & = P^{(0)},  \\
    \frac{\ud P^{(0)}}{\ud t} & =- \nabla E_0 (Q^{(0)}). 
  \end{align}
\end{subequations}
To match the term $E_0(x) u^{(0)}$, we expand $E_0(x)$ around
$Q^{(0)}$ to get
\begin{align*}
  E_0(x) u^{(0)}&= \frac{1}{(2\pi\veps)^{3m/2}} \int  \ud z_0 \; E_0(Q^{(0)}(t, z_0)) A^{(0)}(t, z_0) \exp \Bigl( \frac{i}{\veps} \Theta^{(0)}(t, z_0, x) \Bigr) \\
  &\qquad +\frac{1}{(2\pi\veps)^{3m/2}} \int  \ud z_0 \;  (x-Q^{(0)}(t, z_0)) \cdot \nabla_Q E_0(Q^{(0)}(t, z_0)) A^{(0)}(t, z_0) \exp \Bigl( \frac{i}{\veps} \Theta^{(0)}(t, z_0, x)  \Bigr)\\
  &\qquad +\frac{1}{(2\pi\veps)^{3m/2}} \int  \ud z_0 \;  \frac{1}{2} (x-Q^{(0)}(t, z_0)) \cdot \nabla_Q^2 E_0(Q^{(0)}(t, z_0)) (x-Q^{(0)}(t, z_0))  A^{(0)}(t, z_0) \exp \Bigl( \frac{i}{\veps} \Theta^{(0)}(t, z_0, x)  \Bigr) \\
   &\qquad + \Or(\veps^2), 
\end{align*}
\red{where we have used Lemma~\ref{lem:asym} to control the terms
containing $(x - Q^{(0)})^3$ and higher order terms.}  To treat the
terms containing powers of $(x - Q^{(0)})$ in the above expressions,
we apply Lemma~\ref{lem:asym} and get
\begin{align*}
  i \veps \partial_t u^{(0)} & =\frac{i\veps }{(2\pi\veps)^{3m/2}}  \int \ud z_0 \;  \partial_t A^{(0)}(t, z_0)  \exp \Bigl( \frac{i}{\veps} \Theta^{(0)}(t,z_0,x) \Bigr)  \\
                             & \qquad -  \frac{1}{(2\pi\veps)^{3m/2}}  \int \ud z_0 \;  A^{(0)}(t, z_0) \Bigl( \partial_t S_0^{(0)}(t, z_0)  -| P_0^{(0)}(t, z_0)|^2 \Bigr)     \exp \Bigl( \frac{i}{\veps} \Theta^{(0)}(t,z_0,x) \Bigr)
                             \\
                              & \qquad - \frac{\veps}{(2\pi\veps)^{3m/2}} \int  \ud z_0 \; \partial_{z_k}\Bigl( A^{(0)}\bigl(\nabla_Q E_0(Q^{(0)})+iP^{(0)}\bigr)_j (Z^{(0)})_{jk}^{-1}\Bigr) \exp \Bigl( \frac{i}{\veps} \Theta^{(0)}(t, z_0, x) \Bigr),
  \\
  \veps^2 \nabla_x u^{(0)}  & =  \frac{i \veps}{(2\pi\veps)^{3m/2}} \int  \ud z_0 \;  A^{(0)}(t, z_0)  P^{(0)}(t, z_0)    \exp \Bigl( \frac{i}{\veps} \Theta^{(0)}(t, z_0, x) \Bigr) + \mathcal O(\veps^2),  \\
   - \frac{\veps^2}{2} \Delta_x u^{(0)} & =   \frac{1}{2} \frac{1}{(2\pi\veps)^{3m/2}} \int  \ud z_0 \;  A^{(0)}(t, z_0) \bigl\lvert  P^{(0)}(t, z_0)  \bigr\rvert^2  \exp \Bigl( \frac{i}{\veps} \Theta^{(0)}(t, z_0, x) \Bigr) \\ \nonumber
   & \qquad + \frac{m\veps}{2} \frac{1}{(2\pi\veps)^{3m/2}} \int  \ud z_0 \;  A^{(0)}(t, z_0) \exp \Bigl( \frac{i}{\veps} \Theta^{(0)}(t, z_0, x) \Bigr) \\
   & \qquad - \frac{i\veps}{(2\pi\veps)^{3m/2}} \int  \ud z_0 \; \partial_{z_k}\Bigl( A^{(0)}\bigl(P^{(0)}\bigr)_j (Z^{(0)})_{jk}^{-1}\Bigr) \exp \Bigl( \frac{i}{\veps} \Theta^{(0)}(t, z_0, x) \Bigr) \\
   & \qquad - \frac{\veps}{2(2\pi\veps)^{3m/2}} \int  \ud z_0 \;  A^{(0)}(t, z_0) \partial_{z_l}(Q^{(0)})_j (I_m)_{jk} (Z^{(0)})^{-1}_{kl}  \exp \Bigl( \frac{i}{\veps} \Theta^{(0)}(t, z_0, x) \Bigr) + \mathcal O(\veps^2),\\
   E_0(x) u^{(0)}&= \frac{1}{(2\pi\veps)^{3m/2}} \int  \ud z_0 \; E_0(Q^{(0)}(t, z_0)) A^{(0)}(t, z_0) \exp \Bigl( \frac{i}{\veps} \Theta^{(0)}(t, z_0, x) \Bigr) \\
  & \qquad- \frac{\veps}{(2\pi\veps)^{3m/2}} \int  \ud z_0 \; \partial_{z_k}\Bigl( A^{(0)}\bigl(\nabla_Q E_0(Q^{(0)})\bigr)_j (Z^{(0)})_{jk}^{-1}\Bigr) \exp \Bigl( \frac{i}{\veps} \Theta^{(0)}(t, z_0, x) \Bigr)\\
   & \qquad + \frac{\veps}{2(2\pi\veps)^{3m/2}} \int  \ud z_0 \;  A^{(0)}(t, z_0) \partial_{z_l}(Q^{(0)})_j (\nabla_Q^2 E_0(Q^{(0)})_{jk} (Z^{(0)})^{-1}_{kl}  \exp \Bigl( \frac{i}{\veps} \Theta^{(0)}(t, z_0, x) \Bigr) + \mathcal O(\veps^2).
\end{align*}
Therefore, matching terms on the leader order, we get
\begin{equation}
\frac{\ud S^{(0)}}{\ud t} = \frac{1}{2} \Abs{ P^{(0)}}^2 - E_0 (Q^{(0)}). 
\end{equation}
The next order $\Or(\veps)$ gives 
\begin{equation}\label{eq:C}
\frac{\ud}{\ud t} A^{(0)}  =\frac 1 2  A^{(0)} \tr\Bigl( (Z^{(0)})^{-1}\bigl(\partial_z P^{(0)} - i \partial_z Q^{(0)}  \nabla^2_Q E_0(Q^{(0)}) \bigr) \Bigr) - A^{(0)} d_{00}\cdot P^{(0)},
\end{equation}
where
\[
\partial_z=\partial_q - i \partial_p,\quad\mbox{and}\quad Z^{(0)}=\partial_z \left(Q^{(0)}+i P^{(0)}\right).
\]
Note that, compared with single surface case, the only difference is
the extra term $- A^{(0)} d_{00}\cdot P^{(0)}$, which comes from the
term $ - \veps^2 d_{00}\cdot \nabla_x u^{(0)}$ in
\eqref{eq:singleu0}. We also remark that $d_{00}$ is purely imaginary
due to the normalization of $\Psi_0$, and hence this extra term only
contributes to an extra phase of $A^{(0)}$.

Coming back to \eqref{eq:expanu0}, we still need to take care of the
term parallel to $\Psi_1$. This extra term corresponds to intersurface
mixing and should be canceled by contributions from \red{wave packets on the other surface}. More specifically, let us examine the term
$(i \veps \partial_t - H) K^{(1)}_{01}$, by direct calculations, we
get
\begin{align}
i \veps \partial_t K^{(1)}_{01} &=i \veps  \Psi_1 \partial_t u^{(1)} \nonumber \\
& =  i \veps \frac{1}{(2\pi\veps)^{3m/2}} \int  \ud z_0 \int_0^t d t_1 \tau^{(1)}(t_1) \partial_t \left( A^{(1)}(t,t_1,z_0) \exp\left(\frac{i}{\veps}\Theta^{(1)}(t,t_1,z_0,x)\right) \right) \Psi_1 \label{term:2-1} \\
& \quad  + i \veps \frac{1}{(2\pi\veps)^{3m/2}} \int  \ud z_0 \; \tau^{(1)}(t)  A^{(1)}(t,t,z_0) \exp\left(\frac{i}{\veps}\Theta^{(1)}(t,t,z_0,x)\right) \Psi_1, \label{term:2-2} \\
\intertext{and}
H K^{(1)}_{01} & = \Bigl(-\frac{\veps^2}{2} \Delta_x + H_e\Bigr) \Psi_1 u^{(1)} \nonumber \\
& =  -\frac{\veps^2}{2} \Delta_x \bigl(  \Psi_1 u^{(1)} \bigr) + E_1  \Psi_1 u^{(1)} \nonumber \\
& =\Psi_1 H_1 u^{(1)} - \veps^2 \nabla_x \Psi_1 \cdot \nabla_x u^{(1)} +  \left( -\frac{\veps^2}{2} \Delta_x \Psi_1 \right) u^{(1)},  \label{term:2-3}
\end{align}
where $H_1=-\frac{\veps^2}{2} \Delta_x +E_1$ is the effective Hamiltonian on the second energy surface.

Therefore, in $(i \veps \partial_t - H) K^{(1)}_{01}$, all the terms
contain the time integration with respect to $t_1$ except the term
\eqref{term:2-2}, which motivates us to impose this term to cancel the
term $-\veps^2 \Psi_1 \red{d_{10}}\cdot \nabla_x u^{(0)}$ from
$(i \veps \partial_t - H) K^{(0)}_{00}$. From the expression of
$\nabla_x u^{(0)}$, this suggests that we shall construct
$\tau^{(1)}, A^{(1)}$, and $\Theta^{(1)}$ such that
\begin{multline}
  \frac{1}{(2\pi\veps)^{3m/2}} \red{ d_{10}}(x) \cdot  \int  \ud z_0 \; A^{(0)}(t, z_0) \bigl( P^{(0)}(t, z_0) +i (x-Q^{(0)}(t, z_0)) \bigr)  \exp \Bigl( \frac{i}{\veps} \Theta^{(0)}(t, z_0, x) \Bigr) = \\
 =  - \frac{1}{(2\pi\veps)^{3m/2}} \int \ud z_0 \; 
 A^{(1)}(t,t,z_0) \tau^{(1)}(t, z_0)
  \exp\Bigl(\frac{i}{\veps}\Theta^{(1)}(t,t,z_0,x)\Bigr) + \Or(\veps). 
\end{multline}
Expand $\red{ d_{10}}(x)$ around $Q^{(0)}$ and apply Lemma~\ref{lem:asym} again, we want
\begin{multline}
  \frac{1}{(2\pi\veps)^{3m/2}}   \int  \ud z_0 \; A^{(0)}(t, z_0)  \red{d_{01}}\bigl(Q^{(0)}(t, z_0)\bigr) \cdot  P^{(0)}(t, z_0) \exp \Bigl( \frac{i}{\veps} \Theta^{(0)}(t, z_0, x) \Bigr) = \\
 =  -  \frac{1}{(2\pi\veps)^{3m/2}} \int \ud z_0 \; 
 A^{(1)}(t,t,z_0) \tau^{(1)}(t, z_0)
  \exp\Bigl(\frac{i}{\veps}\Theta^{(1)}(t,t,z_0,x)\Bigr) + \Or(\veps). 
\end{multline}
A natural choice is then to set for any $t$ and $z_0$, 
\begin{align}
  & A^{(1)}(t, t, z_0) = A^{(0)}(t, z_0), \label{eq:A1A0}\\
  & P^{(1)}(t, t, z_0) = P^{(0)}(t, z_0), \\
  & Q^{(1)}(t, t, z_0) = Q^{(0)}(t, z_0), \\
  & S^{(1)}(t, t, z_0) = S^{(0)}(t, z_0), \label{eq:S1S0}\\
  & \tau^{(1)}(t, z_0) = - \red{d_{10}}\bigl(Q^{(0)}(t, z_0)\bigr) \cdot  P^{(0)}(t, z_0). \label{eq:tau1}
\end{align}
Therefore, this sets the initial conditions of
$A^{(1)}(t, t_1, z_0), P^{(1)}(t, t_1, z_0), Q^{(1)}(t, t_1, z_0),
S^{(1)}(t, t_1, z_0)$
at $t = t_1$ (recall that from the definition of \eqref{eq:u0n}, those
FGA variables are only needed for $t \ge t_1$).

Now back to the other terms in
$(i \veps \partial_t - H) K^{(1)}_{01}$. Again, in \eqref{term:2-3},
the term $( -\frac{\veps^2}{2} \Delta_x \Psi_1 ) u^{(1)}$ is of order
$\mathcal{O}(\veps^2)$, which will be neglected as the FGA approximation is
determined by terms up to $\Or(\veps)$. We expand $\nabla_x \Psi_1$ in
terms of the adiabatic states
$\nabla_x \Psi_1 = \red{d_{01}} \Psi_0 + d_{11} \Psi_1$, the contribution of
$\Psi_1$ will be asymptotically matched by imposing the appropriate evolution equation for $A^{(1)}$, while the term component in
$\Psi_0$, $\veps^2 \Psi_0 \red{d_{01} }\cdot \nabla_x u^{(1)}$,  has to be
matched by contributions from $(i \veps \partial_t - H) K^{(2)}_{00}$.

Analogously to the construction of $u^{(0)}$, we impose that for $t \ge t_1$, $(P^{(1)}, Q^{(1)})$ satisfy the Hamiltonian flow with the effective Hamiltonian $h_1 = \frac{1}{2} \abs{p}^2 + E_1(q)$. 
\begin{align*}
\frac{\ud}{\ud t} Q^{(1)} & = P^{(1)},  \\
\frac{\ud}{\ud t} P^{(1) }& = -\nabla E_1 (Q^{(1)}).
\end{align*}
The evolution of other FGA variables can be determined by matched
asymptotics, also similar to what was done for $u^{(0)}$. This leads
to for $t \ge t_1$,
\begin{align*}
\frac{\ud}{\ud t} S^{(1)} & = \frac{1}{2} ( P^{(1)} )^2 - E_1 (Q^{(1)}), \\
\frac{\ud}{\ud t} A^{(1)} & = \frac{1}{2}  A^{(1)} \tr\Bigl( (Z^{(1)})^{-1}\bigl(\partial_z P^{(1)} - i \partial_z Q^{(1)}  \nabla^2_Q E_1(Q^{(1)}) \bigr) \Bigr) - A^{(1)} d_{11}\cdot P^{(1)}, 
\end{align*}
with initial conditions given by the continuity condition \red{\eqref{eq:A1A0} and \eqref{eq:S1S0}}. 
Note that the equations have the same structures as the evolution
equations for $S^{(0)}$ and $A^{(0)}$, except that they now evolve on
the energy surface $E_1$.

In a similar way, the evolution equations \eqref{eq:evenevolve}  for all order FGA variables and hopping
coefficients are recursively determined, with continuity condition
\eqref{eq:contcond} at the hopping.

\smallskip 

To end this part, let us generalize the ansatz to the case 
that initial wave functions consist of both $\Psi_0$ and $\Psi_1$. Since the equation is linear, the solution is given by superposition of initial conditions concentrating on each energy surface. Thus, in the general case, the FGA with surface hopping is given by 
\begin{equation}\label{ansatz2}
u_{\FGA}(t,x,r)= K^{(0)}(t,x,r)+ K^{(1)}(t,x,r)+K^{(2)}(t,x,r)+K^{(3)}(t,x,r)+\cdots,
 \end{equation}
where
\[
K^{(n)}=
\begin{cases}
K^{(n)}_{01}+K^{(n)}_{10}= \Psi_1 u^{(n)}_0 + \Psi_0 u^{(n)}_1, & n \; \text{odd},  \\
K^{(n)}_{00}+K^{(n)}_{11}= \Psi_0 u^{(n)}_0 + \Psi_1 u^{(n)}_1, & n \; \text{even}. 
\end{cases}
\]
The expression for $u_0^{(n)}$ and $u_1^{(n)}$ are similar as the
previous case, and hence will be omitted.


\section{Convergence of FGA with surface
  hopping}\label{sec:convergence}

In this section, we will present the main approximation theorem of FGA
with surface hopping and also provide some examples to justify \red{and understand} the assumptions.

\subsection{Assumptions and main theorem}

For the asymptotic convergence of the frozen Gaussian approximation
with surface hopping ansatz, as $\Psi_j(r; x)$ are fixed, the heart of
matter is the approximation of $U = \bigl(\begin{smallmatrix} u_0 \\
  u_1 \end{smallmatrix} \bigr)$, which solves the matrix Schr\"odinger
equation \eqref{vSE}. Recall that in the FGA ansatz, $U$ is
approximated by (note that we have assumed $u_1(0,x) = 0$)
\begin{equation}
  U_{\FGA}(t, x) = 
  \begin{pmatrix}
    u^{(0)} + u^{(2)} + \cdots \\
    u^{(1)} + u^{(3)} + \cdots
  \end{pmatrix}.
\end{equation}
To guarantee the validity of the asymptotic matching, we make some
natural assumptions of $E$, $d$ and $D$, the coefficients appeared in
\eqref{vSE}. 

Beside the coupling terms in the matrix Schr\"odinger equation
\eqref{vSE}, the non-adiabatic transition is also related to the 
gap between the adiabatic energy surfaces, given by
\begin{equation}
  \delta := \inf_x \bigl( E_1(x) - E_0(x) \bigr).
\end{equation}
\red{In the most interesting non-adiabatic regime, $\delta>0$} should also be viewed as a small parameter.  In
fact, if $\delta$ is fixed and $\veps \rightarrow 0$, the matrix
Schr\"odinger equation \eqref{vSE} is approaching its adiabatic limit,
namely we can neglect the transition to the other part of the spectrum; see
\cite{BO1,BO2}. To ensure significant amount of transition as
$\veps \rightarrow 0$, it is most interesting to consider
$\delta \rightarrow 0$ simultaneously.  Therefore, we will consider a family of matrix
Schr\"odinger equations with the coefficients depending on $\delta$.
We will emphasize the $\delta$ dependence and write $H^\delta_e$,
$E^\delta$, $d^\delta$, $D^\delta$, etc. when confusion might occur.

We start by the assumption on the energy surface $E^\delta$. 
\begin{hyp}\label{assuma}
  Each energy surface $E^\delta_k(q) \in C^{\infty}(\R^m)$,
  $k\in \{0, 1\}$ and satisfies the following subquadratic condition,
  where the constant $C_E$ is uniform with respect to $\delta$,
  \begin{equation}\label{cond:subqua}
    \sup_{q \in \RR^m} \Abs{\partial_{\alpha} E^{\delta}_k (q)} \leq C_E, \quad \forall\, \abs{\alpha} = 2. 
  \end{equation}
\end{hyp}
This assumption guarantees that the Hamiltonian flow of each energy
surface satisfies global Lipschitz conditions, such that the global
existence of the flow is guaranteed. In particular, \eqref{cond:subqua} immediately implies that 
\begin{equation}
    \bigl\lvert \nabla_q E^\delta_k(q) \bigr\rvert \leq C_E \bigl(\abs{q} + 1\bigr), \quad \forall\, q \in \RR^m, 
\end{equation}
which we will use later. Note that similar subquadratic assumptions
are needed even for the validity of FGA method for the single surface
model.  We will further investigate the related properties of the
Hamiltonian flow in Section~\ref{sec:prelim}.

We recall that the initial coefficient of the \red{surface hopping ansatz} is given by
\begin{equation}\label{eq:C00}
  A^{(0)}(0, z) = 2^{m/2} \int_{\R^m} e^{\frac{i}{\veps}(-p \cdot (y-q) + \frac{i}{2} \abs{y - q}^2)} u_0(y) \ud y. 
\end{equation}
By Lemma~\ref{lem:initial}, the FGA ansatz recovers the initial
condition of the Schr\"odinger equation at time $0$.

As the Gaussian is not compactly supported, $A^{(0)}$ is in general
not compactly supported either. This causes some trouble as for
example the hopping coefficient $\tau$, given by \textit{e.g.}, $\red{ -}p
\cdot d^\delta_{01}(q)$, is not uniformly bounded with respect to all
starting points $z_0$ of the trajectory.  On the other hand, notice
that $A^{(0)}$ will decay very fast on the phase space, especially
when $\veps$ is small, since the Gaussian $e^{\frac{i}{\veps}(-p \cdot
  (y-q) + \frac{i}{2} \abs{y - q}^2)}$ decays very fast both in 
real and Fourier spaces. Therefore, we may truncate $A^{(0)}$
outside a compact set which only introduces a small error to the
approximation to the initial condition. As this issue arises already
in the usual frozen Gaussian approximation (see \cite{FGA_Conv} for
example), we will make the assumption that there exists a compact set
$K \subset \R^{2m}$ that the initial approximation error, given by
\begin{equation}\label{eq:defein}
  \eps_{\text{in}} = \Norm{  \frac{1}{(2\pi \veps)^{3m/2}} \int_K A^{(0)}(0, z) e^{\frac{i}{\veps} \Phi^{(0)}(0, x, z)} \ud z - u_0(0, x) }_{L^2(\R^m)}
\end{equation}
is negligibly small for the accuracy requirement. Note that
$\eps_{\text{in}}$ depends on the semiclassical parameter $\veps$ and
goes to zero as $\veps \to 0$ (the rate depends on $u_0(0)$). 
Therefore, we will restrict the initial condition $z_0 = (q_0, p_0)$
to the compact set $K$ in the FGA ansatz with surface hopping. 

Moreover, as we shall prove in Proposition \ref{prop:comp}, given
$t>0$ and initial condition $z_0 = (q_0, p_0)$ restricted in a compact
set $K$, the FGA trajectory is confined in a compact set $K_t$ which
is independent of the hopping history within $[0,t]$. For the matrix
Schr\"odinger equation, we also need the assumption on the coupling
coefficients $d^\delta$ and $D^\delta$, and their boundedness on the
compact set $K_t$.
\begin{hyp}\label{assumb}
  For $k,l \in \{0,1\}$, we have $d^\delta_{kl} \in C^{\infty}(\R^m)$
  and $D^\delta_{kl} \in C^{\infty}(\R^m)$. Moreover, given $t>0$, we
  assume that $E^\delta_k$, $d^\delta_{kl}$ and $D^\delta_{kl}$ and
  their derivatives are uniformly bounded with respect to $\delta$ on
  $K_t$, which is the compact set the trajectory stays within up to
  time $t$ (see Proposition~\ref{prop:comp}).
\end{hyp}

To further understand the implications of Assumption~\ref{assumb}, we
have for $k \neq l$ the explicit expression by standard perturbation
theory, when $E^\delta_k \ne E^\delta_l$,
\[
\red{d^\delta_{lk}(x)}=\langle \Psi^{\delta}_l, \nabla_x \Psi^{\delta}_k
\rangle=\frac{\langle \Psi^{\delta}_l, (\nabla_x H^\delta_e)
  \Psi^{\delta}_k \rangle}{E^\delta_k-E^\delta_l}.
\]
As the denominator on the right hand side is given in terms of the
energy gap, when the gap approaches $0$, the coupling vector becomes
unbounded unless the numerator is also getting small. 
\red{On the other hand, we allow the possibility that the gap between
  the two energy surface is very small, even of the order of $\veps$
  (but $d^\delta$ is still $\Or(1)$).}  Examples will be given in the
next subsection.

Now we are ready to state the main approximation theorem. 

\smallskip

\begin{theorem}\label{thm:main}
  Let $U_{\FGA}(t,x)$ be the approximation given by the FGA with
  surface hopping (with phase space integral restricted to $K$) for
  the Schr\"odinger equation \eqref{vSE}, whose exact solution is
  denoted by $U(t,x)$. Under Assumptions~\ref{assuma} and
  \ref{assumb}, for any given final time $t$, there exists a constant
  $C$, such that for any $\veps>0$ sufficiently small and any
  $\delta > 0$, we have
  \begin{equation*}
    \Norm{U_{\FGA}(t, x) -  U(t, x)}_{L^2(\R^m)} \le C \veps + \eps_{\text{in}},
  \end{equation*}
  where $\eps_{\text{in}}$ is the initial approximation error defined in \eqref{eq:defein}.
\end{theorem}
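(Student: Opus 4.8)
The plan is to run the standard ``stability plus consistency'' argument familiar from the convergence theory of the Herman--Kluk propagator, now applied to the surface-hopping series. Since the matrix Schr\"odinger evolution $e^{-iHt/\veps}$ associated to \eqref{vSE} is an isometry on $L^2(\R^m;\C^2)$, Duhamel's formula applied to $U_{\FGA}$ gives, writing the residual $\mathcal{R}(s):=\bigl(i\veps\partial_s-H\bigr)U_{\FGA}(s)$,
\[
\Norm{U_{\FGA}(t)-U(t)}_{L^2}\le\Norm{U_{\FGA}(0)-U(0)}_{L^2}+\frac{1}{\veps}\int_0^t\Norm{\mathcal{R}(s)}_{L^2}\ud s.
\]
The first term is exactly $\eps_{\text{in}}$: by Lemma~\ref{lem:initial} together with the truncation of the phase-space integral to $K$, and because $u^{(n)}(0)=0$ for $n\ge 1$ (the time-ordered simplex $\{0<t_1<\cdots<t_n<0\}$ is empty) while $u_1(0,x)=0$ by assumption, so $U_{\FGA}(0)-U(0)$ is precisely the object measured in \eqref{eq:defein}. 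Everything therefore reduces to showing $\Norm{\mathcal{R}(s)}_{L^2}\le C\veps^2$ uniformly in $s\in[0,t]$ and in $\delta>0$.

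To make sense of the residual of the infinite series, I would first establish absolute convergence of $\sum_n K^{(n)}$ and of its term-by-term residual, so that one may argue with the truncated sum $\sum_{n\le N}K^{(n)}$ and let $N\to\infty$. The basic input is Proposition~\ref{prop:comp}: every FGA trajectory launched from $z_0\in K$ stays in a fixed compact set $K_t$ independently of the hopping history, so by Assumptions~\ref{assuma} and~\ref{assumb} the Hamiltonian flows, the amplitudes $A^{(n)}$, the Jacobi matrices $Z^{(n)}$ together with $(Z^{(n)})^{-1}$, and the hopping coefficients $\tau^{(k)}=-P^{(k)}\cdot d_{\cdots}(Q^{(k)})$ are all bounded on $K\times\{\text{simplex}\}$ with constants uniform in $\delta$ and locally uniform in $\veps$. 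Integrating over the time-ordered simplex $0<t_1<\cdots<t_n<s$ supplies a factor $s^n/n!$, giving $\Norm{K^{(n)}}_{L^2}\le C(C't)^n/n!$ and hence $\sum_n\Norm{K^{(n)}}_{L^2}<\infty$; this is the content of Section~\ref{sec:absconv}.

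Next I would compute $\mathcal{R}(s)$ order by order along the lines of Section~\ref{sec:asymptotic}. For each $n$, the component of $(i\veps\partial_s-H)K^{(n)}$ parallel to the currently active adiabatic state $\Psi_{l^{(n)}}$ is $\Or(\veps^2)$: the ODEs for $S^{(n)},P^{(n)},Q^{(n)},A^{(n)}$ (in particular \eqref{eq:C} with the extra $-A^{(n)}d_{l^{(n)}l^{(n)}}\cdot P^{(n)}$ term) were chosen precisely to annihilate the $\Or(1)$ and $\Or(\veps)$ contributions via Lemma~\ref{lem:asym}. The component orthogonal to it, namely the intersurface-mixing term $-\veps^2\Psi_{1-l^{(n)}}\,d_{\cdots}\cdot\nabla_x u^{(n)}$, is cancelled up to $\Or(\veps^2)$ by the boundary term produced when $i\veps\partial_s$ hits the upper endpoint $t_{n+1}=s$ of the innermost time integral in $K^{(n+1)}$; this match is forced by the continuity conditions \eqref{eq:contcond} and the definition \eqref{eq:taudef} of $\tau^{(n+1)}$. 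Summing over $n$ with the per-order weight $(C't)^n/n!$ yields $\Norm{\mathcal{R}(s)}_{L^2}\le C\veps^2 e^{C't}$, so $\tfrac{1}{\veps}\int_0^t\Norm{\mathcal{R}}_{L^2}\ud s\le C\veps$, which combined with the initial estimate gives the claim.

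The main obstacle is the quantitative per-order residual bound, uniform both in $n$ and in $\delta$. One has to propagate control of the $z_0$-derivatives of all FGA variables across an a priori unbounded number of surface segments, verify that $Z^{(n)}$ remains invertible after each hop (so that $(Z^{(n)})^{-1}$ and Lemma~\ref{lem:asym} are applicable), and carefully account for the boundary terms generated by Leibniz-differentiating the nested time-ordered integrals so that the telescoping cancellation between consecutive orders is exact to $\Or(\veps^2)$. The uniformity in $\delta$ is the delicate point: the energy gap $\delta$ never enters the ansatz directly, only through $E^\delta,d^\delta,D^\delta$, and Assumption~\ref{assumb} is exactly the hypothesis that these and their derivatives stay $\Or(1)$ on $K_t$ even as $\delta\to0$, which is what prevents any $1/\delta$ blow-up in the hopping coefficients and amplitude equations.
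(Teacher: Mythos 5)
Your proposal is correct and follows essentially the same route as the paper: a consistency estimate for the residual of the surface-hopping series (obtained from the compactness of the trajectories, the uniform bounds of Assumptions~\ref{assuma}--\ref{assumb}, the Fourier-integral-operator bound, the $t^n/n!$ simplex factor, and the telescoping cancellation between the intersurface term of order $n$ and the boundary term of order $n+1$), combined with the unitary-propagator/Duhamel stability lemma and the identification of the initial error with $\eps_{\text{in}}$. This matches the paper's Theorem~\ref{thm:consistency}, Theorem~\ref{thm:ansatz} and Lemma~\ref{lem:conv}, so no further comparison is needed.
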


This theorem implies, in the simultaneous limit, $\eps \rightarrow 0$,
$\delta \rightarrow 0$, the FGA method with surface hopping remains a
valid approximation with $\mathcal O(\veps)$ error. This covers the
interesting regime when the transition between surfaces is not negligible
($\Or(1)$ transitions occur) and also the adiabatic regime that
$\veps \to 0$ with a fixed $\delta$ (so non-adiabatic transition is
negligible). In particular, we emphasize that the constant $C$ is
independent of both $\veps$ and $\delta$.

While we will not keep track the precise dependence of the constant
$C$ on $t$, by our proof techniques, we would 
\red{at best prove an exponential growth of the 
constant as $t$ gets large,} due to the use of
Gronwall type inequalities. In our numerical experience, the error
accumulation seems milder.

\subsection{Examples of matrix Schr\"odinger equations} \label{sec:example}

Let us explore some specific examples to better understand the
Assumptions~\ref{assuma} and \ref{assumb}.  Recall that, we consider
the case of two adiabatic states, which means that the Hilbert space
corresponding to the electronic degree of freedom is equivalent to
$\C^2$, and hence the electronic Hamiltonian $H^\delta_e$ is
equivalent to a $2\times 2$ matrix. As discussed above, the most
interesting scenario is when the two surfaces are not well separated. We
recall that the small parameter $\delta$ indicates the gap between the
two energy surfaces and focus on the more interesting cases that the
gap goes to $0$ as $\veps \to 0$.

A general class of electronic Hamiltonian satisfying our assumptions can be given as a product of a scalar function $F^\delta(x)$ and a $2 \times 2$ matrix $M(x)$ independent of $\delta$, namely
\begin{equation*}
H_e^{\delta}(x)= F^\delta(x) M(x).
\end{equation*}
We observe that, due to the specific choice, $H_e^{\delta}$ and $M$
share the same eigenfunctions, and if we denote the eigenvalues of $M$
by $\lambda_k$, then we have
\begin{equation}\label{case1}
E_k^{\delta}(x) =F^\delta (x)  \lambda_k(x).
\end{equation}
Then, we obtain that, for $k \ne l$,
\[
\red{ d^\delta_{lk}}=\frac{\langle \Psi_l, \nabla_x H_e^\delta \Psi_k \rangle}{E_k^{\delta}-E_l^{\delta}} = \frac{ F^\delta \langle \Psi_l, \nabla_x M \Psi_k \rangle}{F^\delta (\lambda_k-\lambda_l)} = \frac{ \langle \Psi_l, \nabla_x M \Psi_k \rangle}{\lambda_k-\lambda_l}. 
\]
Similarly, one can show that,
\[
\red{D^\delta_{lk}}=\langle \Psi_l, \Delta_x \Psi_k \rangle=\frac{\langle \Psi_l, \Delta_x M \Psi_k \rangle-2 \nabla_x \lambda_k \cdot d_{kl}}{\lambda_k-\lambda_l}.
\]
Therefore, $d^\delta$ and $D^\delta$ are independent of independent of $\delta$, and we thereby suppress the appearance of $\delta$. Moreover,  $d$ and $D$ are independent of $F^\delta$, while
we can take $F^{\delta}$ such that energy surfaces become close and
even touch each other as $\delta \to 0$.  The set of almost degenerate
points of the energy surfaces may consist of one single point, several
points, or even \red{an interval}, as we will see below.

We note that even though this construction looks rather special, it is
actually versatile enough to cover many examples considered in the
chemistry literature. We will present three model problems here, which are 
adapted from Tully's original examples in \cite{Tully}.

\noindent\textbf{Example 1(a). Simple avoided crossing.}  We choose
$M$ to be
\[
M=
\begin{pmatrix}
\frac{\tanh (x)}{2\pi}& \frac{1}{10}\\
\frac{1}{10} & -\frac{\tanh (x)}{2\pi}
\end{pmatrix}.
\]
The eigenvalues of $M$ are \[
\pm \sqrt{\frac{\tanh^2 (x)}{4\pi^2}+\frac{1}{100} }.
\] 
We observe that the two eigenvalue surfaces are close around $x=0$. By the plots of $d_{01}$ and $D_{01}$ in Figure \ref{fig:a2}, we see the coupling is significant  around $x=0$ as well. To control the energy gap, we introduce the following $F^\delta$ function,
\[
F^\delta (x)= 1 +(\delta -1 ) e^{-10 x^2},  
\] 
such that $F^\delta(x) = \mathcal{O}(\delta)$ around $x=0$ and
$F^{\delta}(0) = \delta$, so that the energy gap vanishes at $x = 0$
as $\delta \to 0$. The eigenvalues of $H_e$ for different values of
$\delta$ are plotted in Figure \ref{fig:a2}. 
   \begin{figure}
\includegraphics[scale=0.5]{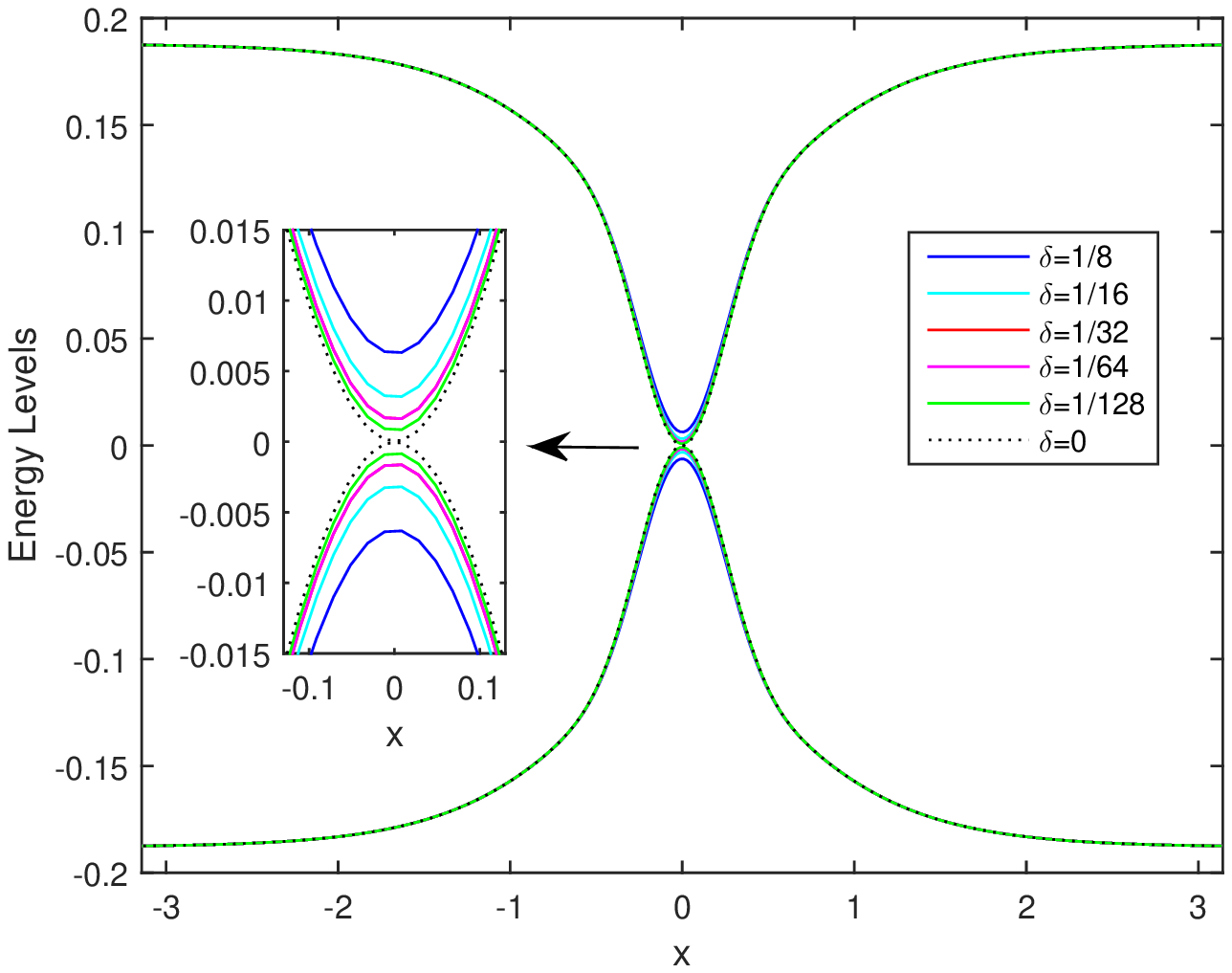}\includegraphics[scale=0.5]{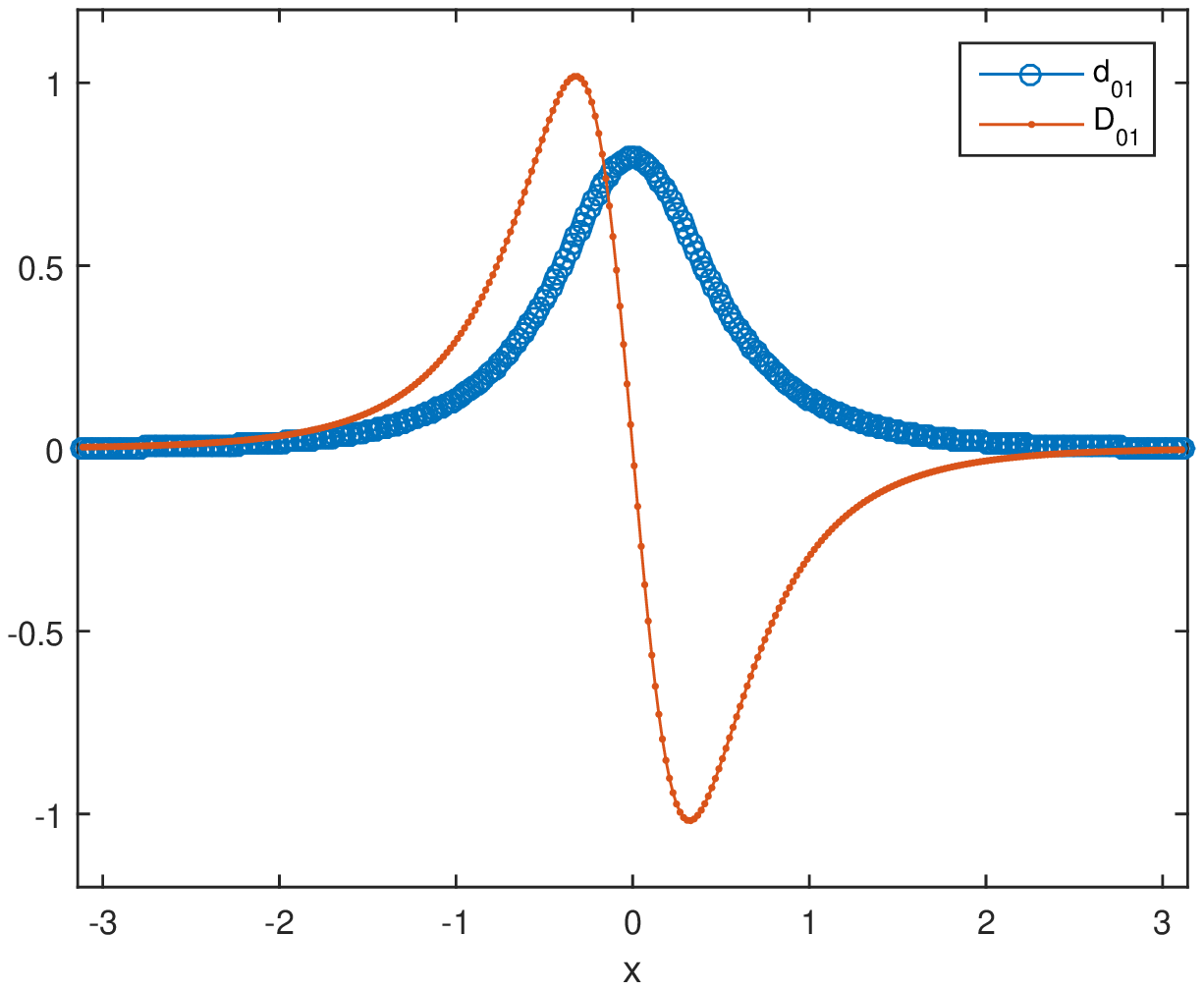} \\
\caption{(Example 1(a)) Left: Eigenvalues of $H_e$, $\delta =\frac 1 8$,
  $\frac 1 {16}$, $\frac 1 {32}$, $\frac 1 {64}$ and $\frac{1}{128}$; reference $\delta=0$. Right:
  the coupling information of $H_e$, invariant with respect to $\delta$.}
\label{fig:a2}
\end{figure}

\noindent\textbf{Example 1(b). Dual avoided crossing.} We choose $M$ to be
\[
M=
\begin{pmatrix}
0 & \frac 1 {20} \\
\frac 1 {20} & -e^{-\frac{x^2}{10}}+\frac 1 2
\end{pmatrix},
\]
The eigenvalues of $M$ are 
\[
\frac  1 2 \left(-e^{- \frac{x^2}{10}}+ \frac 1 2 \right) \pm \sqrt{\frac  1 4 \left(-e^{- \frac{x^2}{10}} + \frac 1 2\right)^2+ \frac{1}{400}}. 
\]
We observe that, the two eigenvalues are closest to each other when $-e^{- \frac{x^2}{10}}+ \frac 1 2=0$, or $x=\pm \sqrt{10 \ln 2}$. The coupling vectors  around these points $x=\pm \sqrt{10 \ln 2}$ are significantly larger than their values elsewhere as shown in Figure \ref{fig:b2}. This explains why the model is often referred to as the dual avoided crossing.

To control the energy gap, we may introduce the following $F^\delta$ function,
\[
F^\delta (x)= 1+ e^{- (2\sqrt{10 \ln 2})^2} +(\delta -1 ) e^{- (x+\sqrt{10 \ln 2})^2}+(\delta -1 ) e^{-(x+\sqrt{2 \ln 2})^2}. 
\]
We can check that $F^\delta = \mathcal O(\delta)$ around  $x=\pm \sqrt{10 \ln 2}$, and
\[
\lim_{\delta \rightarrow 0} F^{\delta} (\pm \sqrt{10 \ln 2}) =
\lim_{\delta \rightarrow 0} \delta \left(1+e^{- (10\sqrt{2 \ln 2})^2}
\right)=0.
\]
Thus the energy gap vanishes at the two points as $\delta \to 0$. This is illustrated in Figure \ref{fig:b2}.
   \begin{figure}
\includegraphics[scale=0.5]{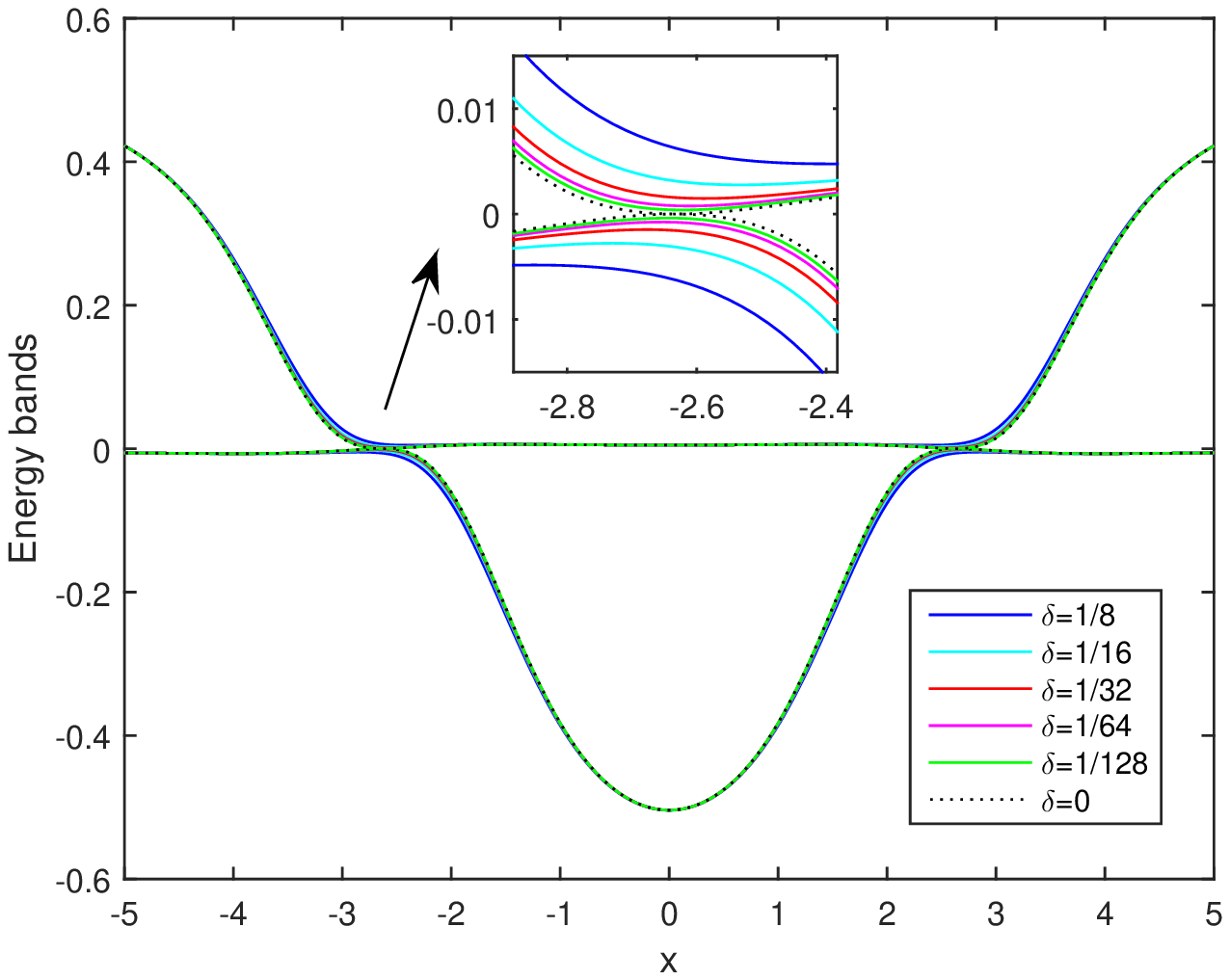}\includegraphics[scale=0.5]{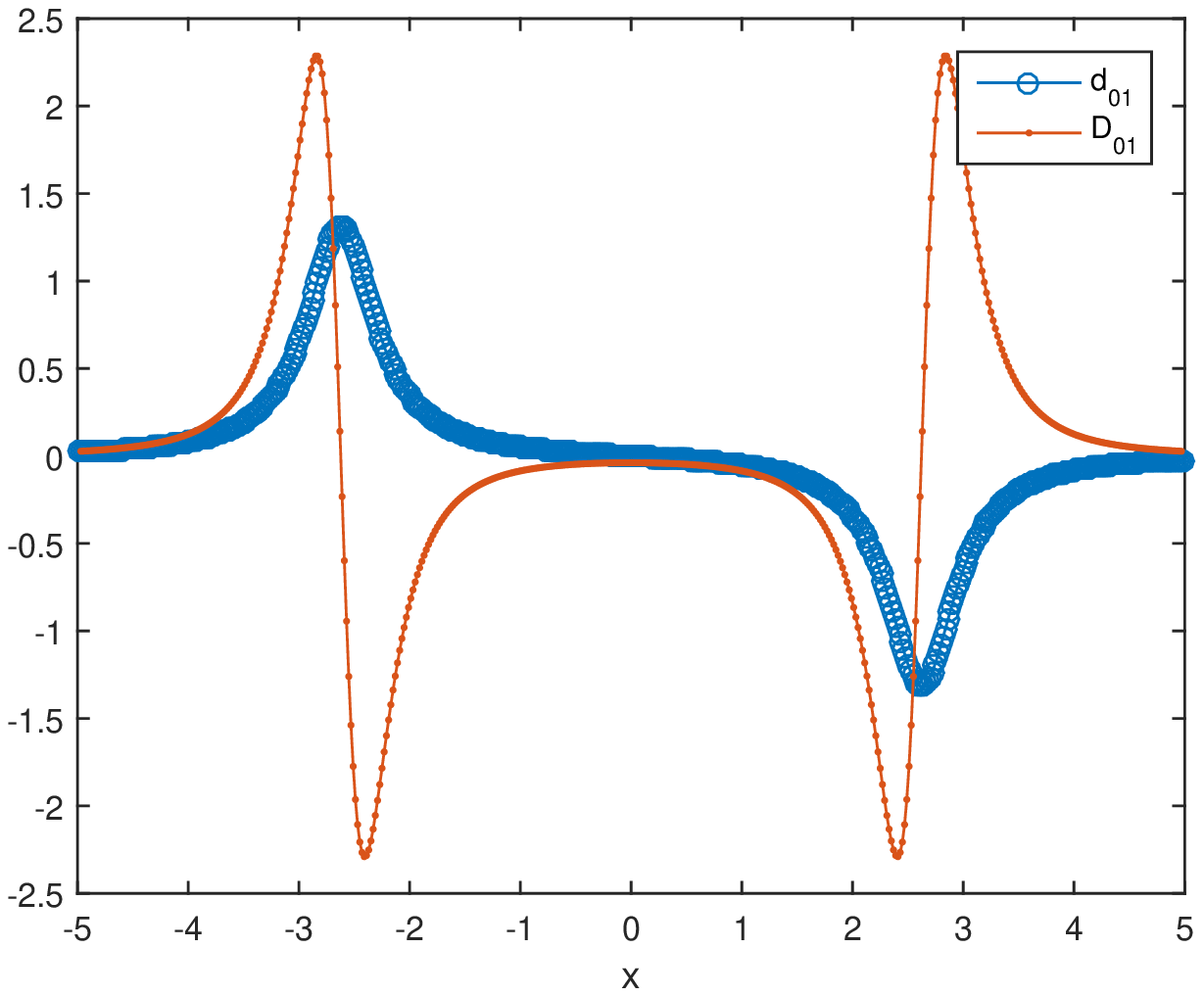} \\
\caption{(Example 1(b)) Left: Eigenvalues of $H_e$,  $\delta =\frac 1 8$,
  $\frac 1 {16}$, $\frac 1 {32}$, $\frac 1 {64}$ and $\frac{1}{128}$; reference $\delta=0$. Right:
  the coupling information of $H_e$, invariant with respect to $\delta$.}
\label{fig:b2}
\end{figure}

\noindent\textbf{Example 1(c). Extended coupling with reflection.} 
In this example, $M$ is set to be 
\[
M=
\begin{pmatrix}
\frac 1 {20} & \frac 1 {10} \left(\arctan(2x) +\frac{\pi}{2} \right) \\
\frac 1 {10} \left(\arctan(2x) +\frac{\pi}{2} \right) & -\frac 1 {20}
\end{pmatrix}.
\]
The eigenvalues of $M$ are 
\[
\pm \sqrt{ \frac 1 {100} \left(\arctan(2x) +\frac{\pi}{2} \right)^2 + \frac 1 {400}}. 
\]
Hence, as $x\rightarrow \infty$, the eigenvalues of $M$,
$\lambda_{\pm}(x) \rightarrow \pm \frac{\pi}{10}$, and as
$x\rightarrow -\infty$, the eigenvalues of $M$,
$\lambda_{\pm}(x) \rightarrow \pm \frac{1}{20}$. As shown in Figure
\ref{fig:c2}, this model involves an extended region of strong
non-adiabatic coupling when $x<0$. Moreover, as $x>0$, the upper
energy surface is \red{increasing} so that trajectories moving from
left to right on the excited energy surface \red{without a large
  momentum} will be reflected while those on the ground energy surface
will be transmitted.

The energy gap between the two surfaces can be controlled by the
following $F^\delta$ function,
\[
F^\delta (x)=\frac{1}{\pi} \left( \arctan(100x)+\frac{\pi}2+\delta \right)
\]
We can check that $F^\delta = \mathcal O(\delta)$ when $x$ is sufficiently small. The family of energy surfaces are illustrated for different values of $\delta$ in Figure \ref{fig:c2}.
   \begin{figure}
\includegraphics[scale=0.5]{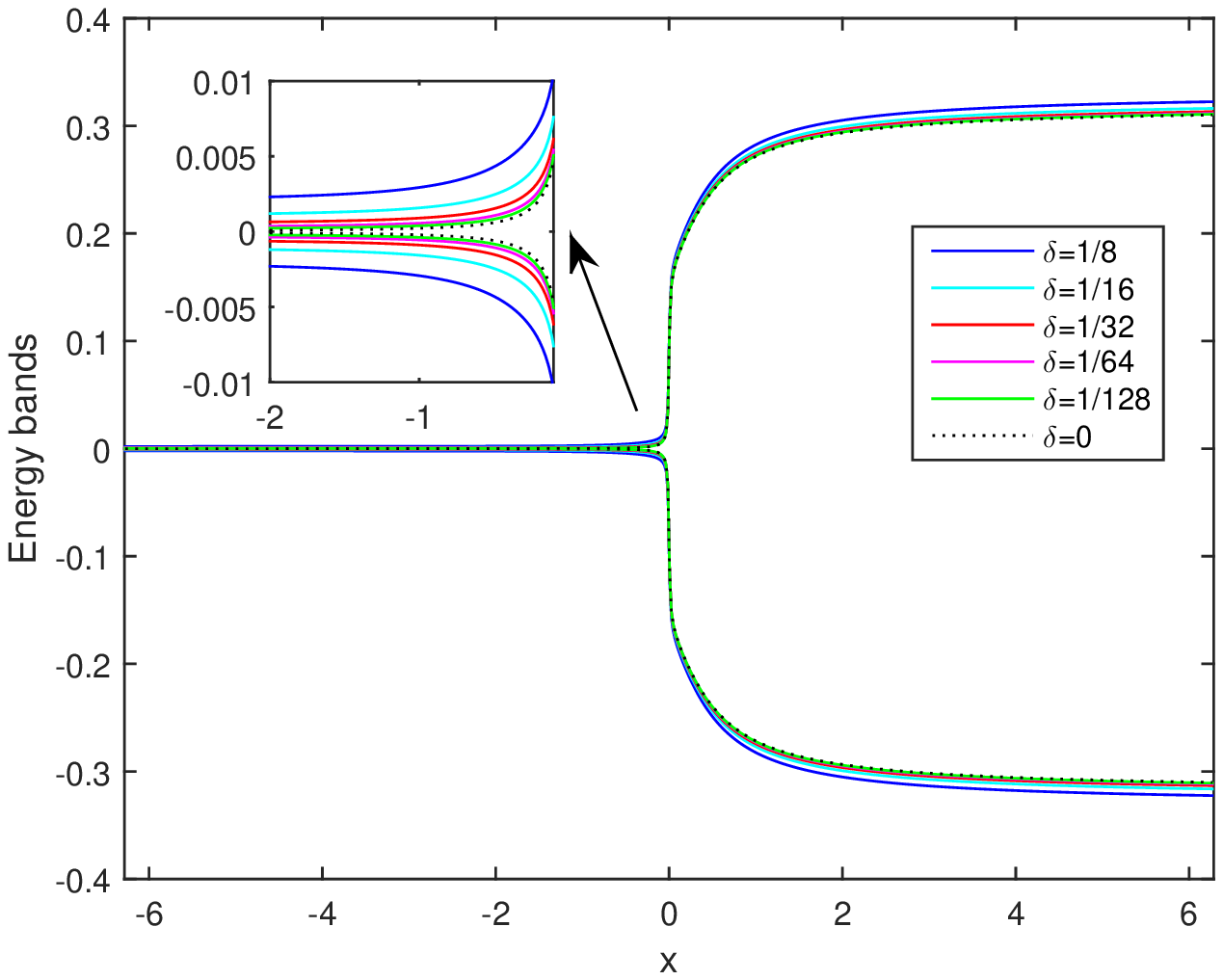}\includegraphics[scale=0.5]{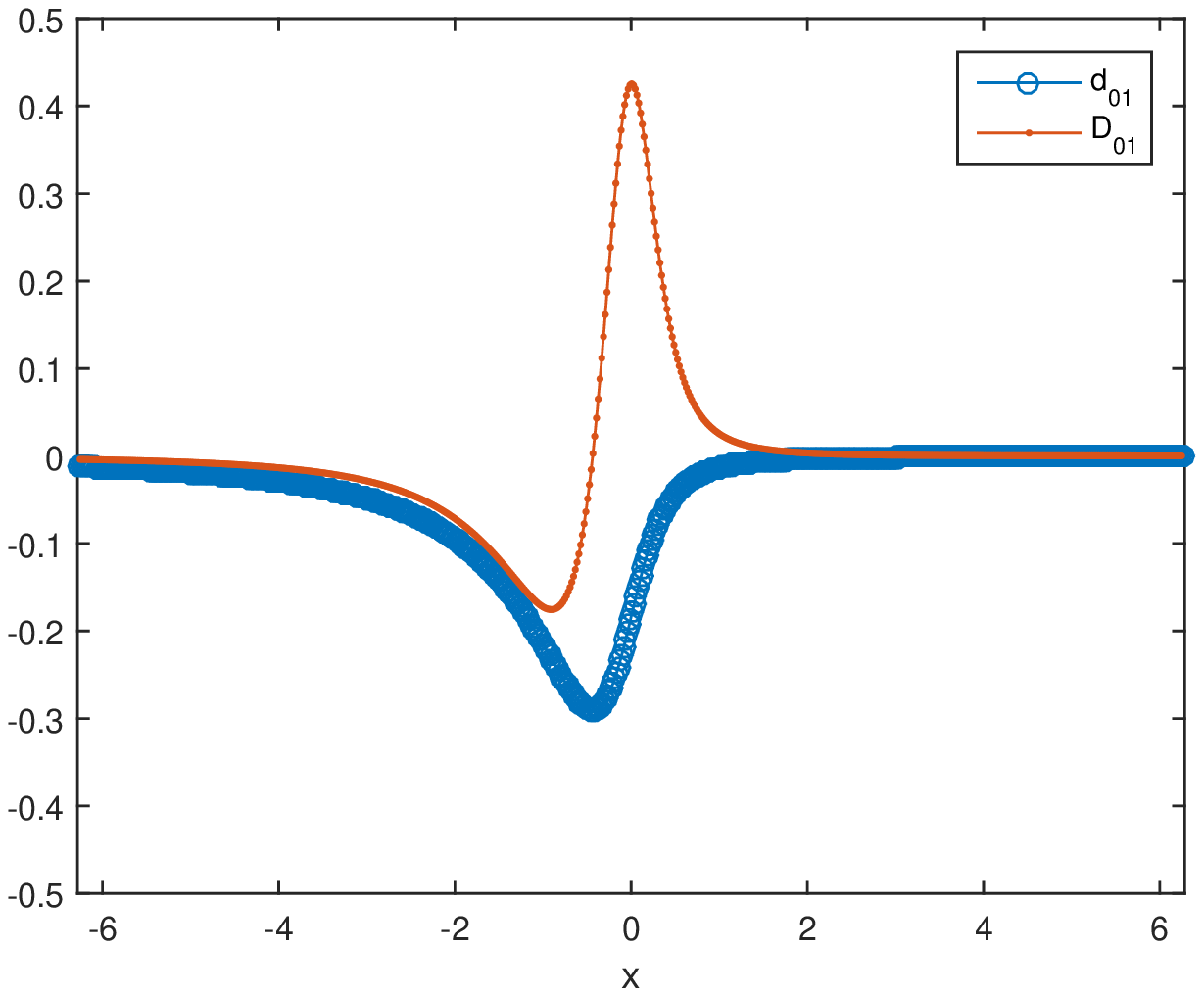} \\
\caption{(Example 1(c)) Left: Eigenvalues of $H_e$,  $\delta =\frac 1 8$,
  $\frac 1 {16}$, $\frac 1 {32}$, $\frac 1 {64}$ and $\frac{1}{128}$; reference $\delta=0$. Right:
  the coupling information of $H_e$, invariant with respect to $\delta$.}
\label{fig:c2}
\end{figure}

\medskip 

\noindent\textbf{Example 2.} 
Let us mention an example that does not satisfy our assumption in the limit $\delta \to 0$, which is in fact the classical conical intersection model with 
\begin{equation}
  H_e^{\delta}(x)  = 
  \begin{pmatrix} 
    x & \delta \\
    \delta & -x 
  \end{pmatrix}. 
\end{equation}
In fact, this is the model often analyzed for Landau-Zener transition.
For this family of Hamiltonians, we have
\[
\Abs{E_+^{\delta}(x)-E_-^{\delta}(x)}=2 \sqrt{x^2+\delta^2},
\]
and
\[
\nabla_x H_e^{\delta} =
\begin{pmatrix} 
1 & 0  \\
0 & -1 
\end{pmatrix}. 
\]
In this case, one can compute the analytical expression for $d_{+-}$ as
\[
d_{+-}^{\delta}(x)=-\frac{\delta}{2(x^2+\delta^2)}.
\] 
Clearly, at $x = 0$,
$d_{+-}^{\delta}=\mathcal{O}(\delta^{-1})$. Therefore, around $0$,
Assumption~\ref{assumb} is violated if $\delta$ goes to zero as
$\veps \to 0$ and our theorem no longer applies.

We remark however that for the practical examples with avoided
crossing, the small parameters (semiclassical parameter and energy
surface gap, etc.) are fixed, rather than converging to
$0$. Therefore, given a particular example, where $H_e^{\delta}$ is
specified for some small $\delta$, we can possibly embed the model
into a different sequence as $\veps \to 0$, so that our method can be
still used. Some numerical studies are presented in Example 4 for this scenario.

Nevertheless, the fact that the asymptotic derivation breaks down for this particular case raises the question that whether one can combine the fewest switch surface hopping type algorithms with the approaches based on Landau-Zener transition. This will be an interesting future research direction.

\section{Numerical examples}\label{sec:numerics}

In this section, we validate the algorithm based on the frozen Gaussian approximation with surface hopping and its probabilistic interpretation. The numerical examples are done for two-level matrix Schr\"odinger equations. 

\subsection{Description of the algorithm}

The algorithm based on the stochastic interpretation in
Section~\ref{sec:prob} is straightforward: We sample the initial point
of the trajectory based on the weight function
$\abs{A^{(0)}(0, z_0)}$; once the initial point is given, we evolve
the trajectory and associated FGA variables with surface hopping up to
some final time; and then we reconstruct the solution based on the
trajectory average \eqref{eq:trajavg}. The value of $A^{(0)}(0,z_0)$
will be calculated on a mesh of $(q, \, p)$ with numerical quadrature
of \eqref{eq:c00}.\footnote{\red{This is of course only possible for
    low-dimensional examples; approximation methods are needed for
    higher dimension calculation, which we do not address here. }} The
time evolution ODEs are integrated using the forth-order Runge-Kutta
scheme. After each time step, we calculate the hopping probability
during the time step $\Delta t |\tau|$ and generate a random number to
see if a hop occurs: If a hopping happens, we change the label of the
current surface and record the phase factor
$\frac{\tau}{|\tau|}$. After the trajectory is determined up to time
$t$, we can calculate the weighting factors in \eqref{eq:trajavg} by
again a numerical quadrature. Our code is implemented in
\textsf{Matlab}.

Note that the algorithm above is the most straightforward Monte Carlo
algorithm for evaluating the average of trajectories
\eqref{eq:trajavg}. With the path integral representation, it is
possible to design more sophisticated algorithms trying to further
reduce the variance. This will be considered in future works.

Comparing the numerical solution with the exact solutions to the Schr\"odinger equations, we have several sources of error, listed below:
\begin{enumerate}
\item[a.] Initial error. This is the error coming from numerical quadrature of $A^{(0)}(0, z_0)$, the mesh approximation in the phase space, and also due to the choice of a compact domain $K$ in the phase space; 
\item[b.] Asymptotic error. This is the $\mathcal O (\veps)$ error coming from the higher order term we neglected in the derivation of the frozen Gaussian approximation with surface hopping ansatz; 
\item[c.] Sampling error. Since the algorithm is a Monte Carlo
  algorithm to compute the average of trajectories \eqref{eq:trajavg}, for
  finite sample size, we will have statistical error compared to the
  mean value. Since this error is due to the variance of the sampling,
  it decays as $1/\sqrt{N_{\text{traj}}}$ where $N_{\text{traj}}$ is the total number of 
  trajectories. This is confirmed in Figure~\ref{fig:sqrtN} and Table~\ref{sqrtNdata} for a
  fixed (and somewhat large) $\veps = \frac{1}{16}$.  \red{In Table~\ref{sqrtNdata}, convergence rates for tests with different number of trajectories are computed by
\[
\text{Conv. Rate} := \log_{N^{b}_{\text{traj}}/N^a_{\text{traj}}} \frac{\mathbb E(e^a)}{\mathbb E(e^b)}.
\]
}
  \begin{figure}
    \includegraphics[scale=0.5]{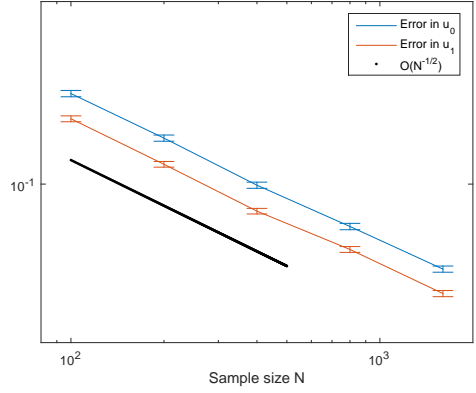}\\
    \caption{For $\veps=\frac{1}{16}$ and various numbers of 
  trajectories $N_{\text{traj}}$, the empirical averages of the total numerical error with 95\% confidence intervals.}
    \label{fig:sqrtN}
  \end{figure}
  \begin{table}
  \begin{tabular}{ c |c| c| c|c |c}
    \hline
    $\veps=\frac{1}{16}$ & $N_{\text{traj}}=100$  & $N_{\text{traj}}=200$ & $N_{\text{traj}}=400$ &  $N_{\text{traj}}=800$ & $N_{\text{traj}}=1600$  \\  \hline
    $\mathbb E(e_0)$ & 1.9889e-01&   1.4182e-01&   9.9173e-02&   7.2472e-02& 5.2443e-02
 \\  \hline
\red{ Conv. Rate} & &\red{0.4879 }  &\red{0.5160} &   \red{0.4525} &   \red{0.4667} \\  \hline
    $\text{Var}(e_0)$& 2.3702e-03 &  1.1585e-03 &  5.6254e-04 &   3.1891e-04  &1.5769e-04 \\  \hline
    $\mathbb E(e_1)$ & 1.6423e-01 &  1.1624e-01 &  8.1430e-02 &   6.0873e-02 & 4.3546e-02  \\  \hline
\red{Conv. Rate }& & \red{ 0.4987}  &  \red{0.5135} &   \red{0.4198} &   \red{0.4833} \\  \hline
    $\text{Var}(e_1)$&1.4000e-03 &   6.5504e-04 &   3.1067e-04 &  1.8907e-04 & 1.0734e-04 \\  
    \hline
  \end{tabular}  
  \medskip
  \caption{For $\veps=\frac{1}{16}$ and various numbers of 
  trajectories $N_{\text{traj}}$, the empirical averages and sample variance of the total numerical error based on $400$ implementations for each test.}
  \label{sqrtNdata}
\end{table}
\item[d.] Quadrature error. In solving the evolution of FGA variables
  and make phase changes at hoppings, the ODE solvers will introduce
  numerical error. Note that, some high order solvers (e.g., RK4 here)
  are preferred for the FGA variables because in the phase function,
  the numerical error is magnified by $\mathcal O(1/\veps)$.
\end{enumerate}

In the numerical tests, we use the following initial sampling
strategy. We first choose a partition integer $M \in \N^+$, and the
corresponding partition constant is defined as
\[
d_M = \max_{ (q, p) \in K} \frac{\bigl\lvert A_0^{(0)}\left(0,q,\,p\right) \bigr\rvert}{M}.
\]
For a specific grid point $(q, p)$, we generate $n_{(q, p)}$
independent trajectories starting with the initial point $(q, p)$ where
\[
n_{(q, p)}=\left \lceil \frac{\bigl\lvert A_0^{(0)}\left(0,q,\,p \right) \bigr\rvert}{d_M} \right \rceil.
\]
For each trajectory initiated from this grid point, the initial weight
$A_0^{(0)}(0, q, p)$ is equally divided. As the partition integer $M$
increases, the number of the trajectories increases, and thus the
numerical error reduces.

\subsection{Numerical tests}

All the test problems we consider in this paper are $1D$ two-state
matrix Schr\"odinger equation with the electronic Hamiltonian $H_e(x)$
assumed to be a $2 \times 2$ matrix potential. We compare with the results from our surface hopping algorithm with the numerical reference solution from the time splitting spectral method (TSSP) (see e.g., \cites{TS,reviewsemiclassical,TSSL}) with sufficiently fine mesh.

\noindent {\bf Example 3.} In this example, we take the electronic Hamiltonian $H_e(x)$ to be same as {Example 1(a)} in Section \ref{sec:example} with $\delta=\veps$, which we recall here for convenience,
\begin{equation}
H_e(x)=\left(1+(\veps-1)e^{-10x^2} \right)\begin{pmatrix}
    \frac{\tanh (x)}{2 \pi} & \frac {1} {10} \\
    \frac {1} {10} & -\frac{\tanh (x)}{2 \pi}
  \end{pmatrix}.
\end{equation}
As as shown in Figure \ref{fig:a2}, the coupling vectors are not
negligible when $-1<x<1$, and hence hopping might occur. 

We choose the initial condition to the two-state Schr\"odinger equation as
\[
 u(0,r,x)=u_0(0,x)\Psi_0(r;x)=\red{(16 \veps)^{-1/4}} \exp\left(\frac{i2x}{\veps}\right) \exp \left(- 16(x-1)^2 \right)\Psi_0(r;x).
\]
this initial condition corresponds to a wave packet in the ground
state energy surface localized at $q=-1$ traveling to the right with
speed $p=2$.

Typical FGA trajectories with surface hopping are plotted in Figure~\ref{fig:traj} for $\veps=\frac{1}{16}$ and $\veps=\frac{1}{128}$. \begin{figure}
\includegraphics[scale=0.5]{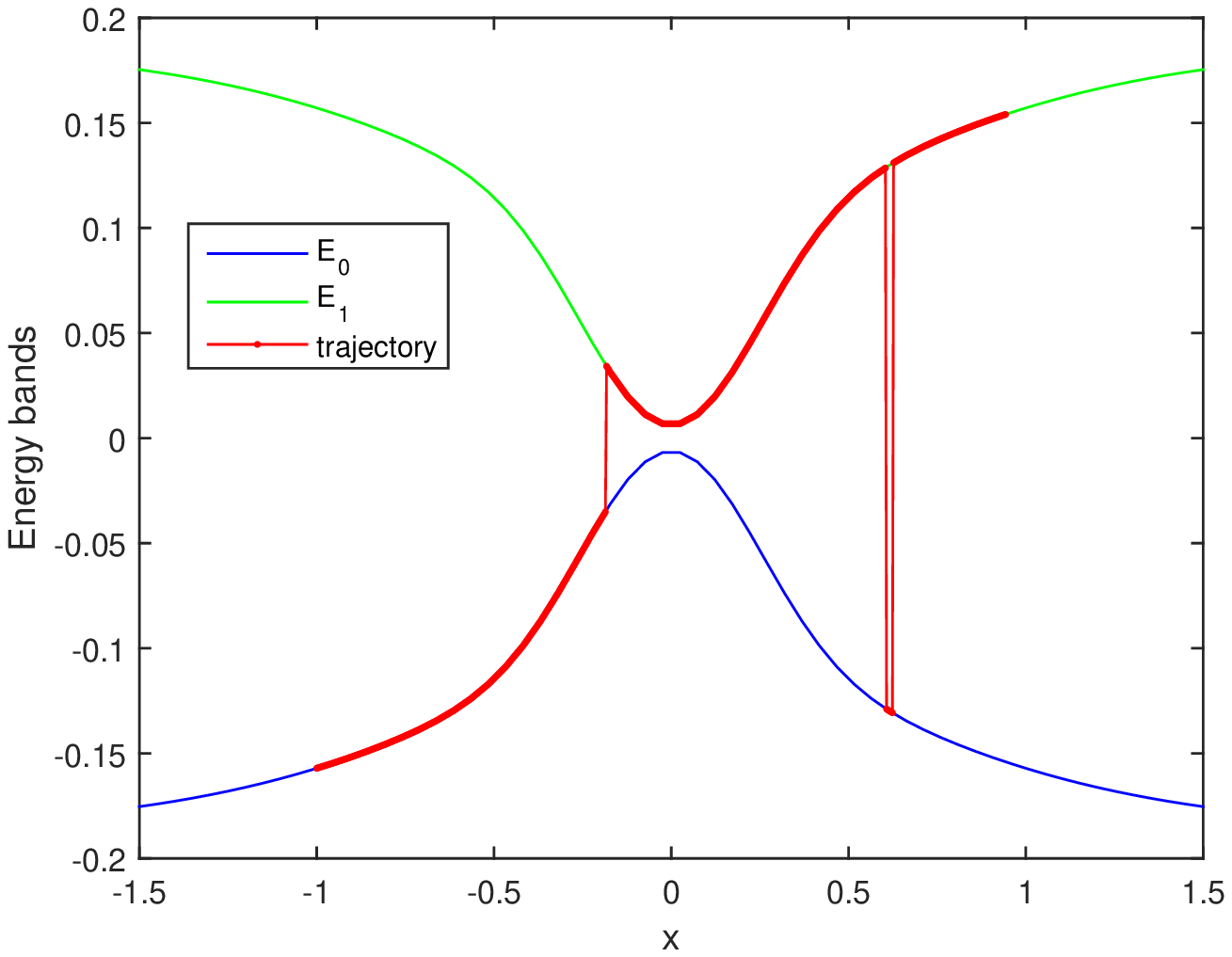} \includegraphics[scale=0.5]{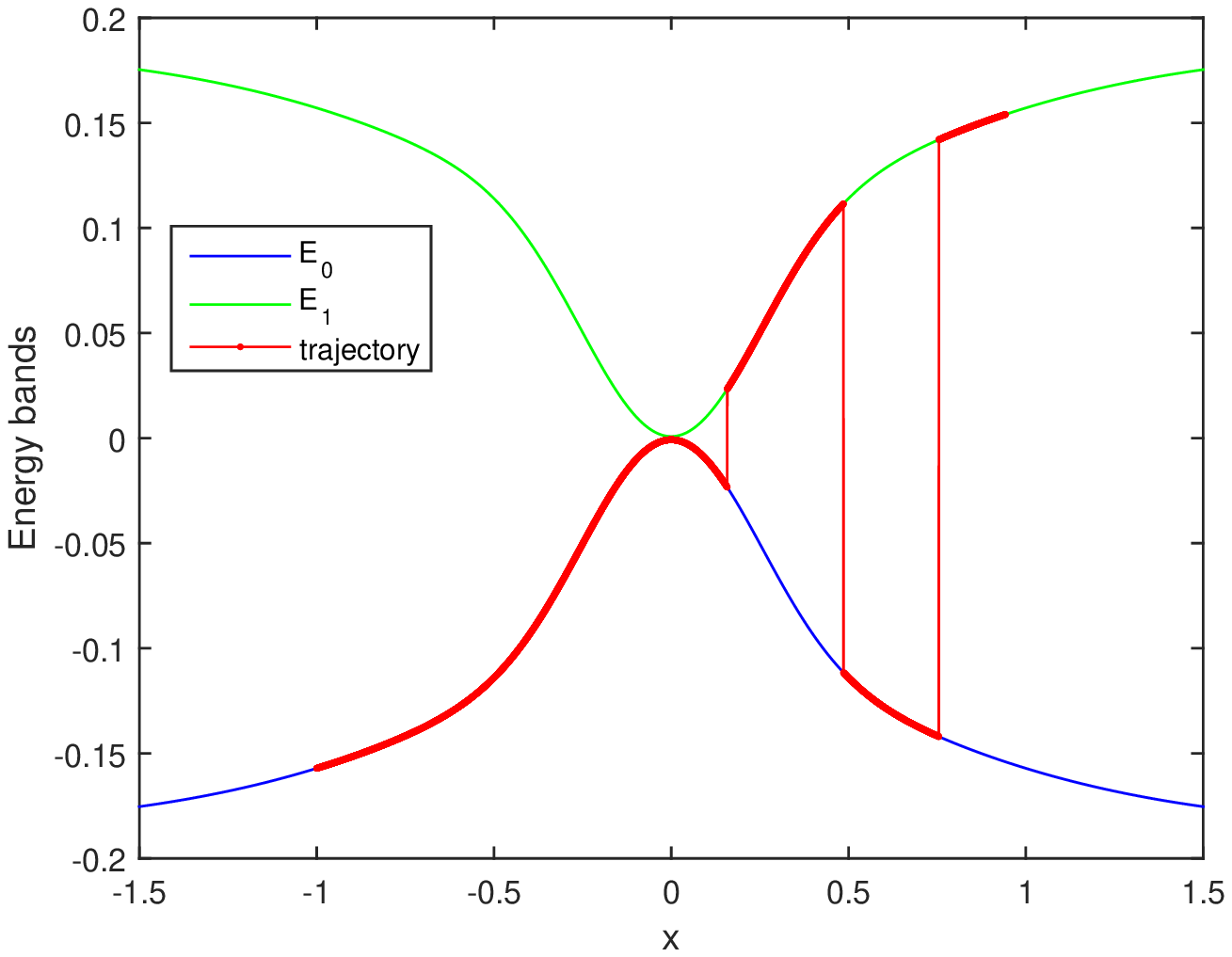}\\
\caption{Typical trajectories in the FGA algorithm. Left: $\veps=\frac{1}{16}$. Right: $\veps=\frac{1}{128}$.}
\label{fig:traj}
\end{figure}

For $\veps=\frac{1}{16}$, $\frac{1}{32}$ and $\frac{1}{64}$, and for
the partition integer $M=1$, $2$, $4$, $8$, $16$, we use the FGA
algorithm to compute the $u_0(t,x)$ and $u_1(t,x)$ till $t=1$. At
$t=1$, the wave packet has traveled across the hopping zone. We choose
the computation domain for $y$ and for $x$ to be $[-\pi,\pi]$, and the
computation domain for $(q,p)$ to be $[-\pi,\pi]\times [0.5,3.5]$. We
choose the following mesh sizes in the FGA method for initial sampling and for reconstructing the solution.
\begin{equation}\label{ex1mesh}
\Delta q= \frac{2 \pi \veps}{8}, \quad \Delta p = \frac{3 \veps}{4}, \quad \Delta x=\Delta y = \frac{2 \pi \veps}{32}.
\end{equation}
With this mesh, the initial error $e_0^\veps$ are summarized in Table
\ref{ex1init}, which is made significantly smaller than the other
parts of the error from the approximation.  Also, we choose the step
size to be very small $\Delta t =\frac{\eps}{32}$ and apply the forth
order Runge-Kutta method to solve the FGA variables. Hence, the total
error is dominated by the asymptotic error and the sampling error in
these tests. The reference solution is computed on $[-\pi,\pi]$ by a second order (in time) TSSP method with sufficiently fine mesh
\[
\Delta x = \frac{2 \pi \veps}{ 64},\quad \Delta t=  \frac{\veps}{32}.
\]

\begin{table}
\scalebox{1.0}{
\begin{tabular}{ l |c| c| c}
\hline
  $\veps$ & $\frac{1}{16}$  & $\frac{1}{32}$ & $\frac{1}{64}$ \\  \hline
$e_0^\veps$ & 8.3178e-05 &  \red{1.4173e-07 }&  \red{1.1697e-07 }\\
   \hline
\end{tabular}
}
\caption{Initial error for  $\veps=\frac{1}{16}$, $\frac{1}{32}$ and $\frac{1}{64}$ with mesh given by \eqref{ex1mesh}.}
\label{ex1init}
\end{table}

To quantify the sampling error, we repeat each test for $400$ times
and estimate the empirical average $\mathbb E(e_k)$ and its variance
and $\text{Var}(e_k)$ are summarized in Table \ref{ex1data}, where
$e_k$ denotes the $L^2$ error of the $k$-th component of the solution,
\red{and convergence rates for different $M$ are
  estimated by
\[
\text{Conv. Rate} := \log_{M^b/M^a} \frac{\mathbb E(e^a)}{\mathbb E(e^b)}.
\]}
The errors with their $95\%$ confidence intervals are
plotted in Figure \ref{fig:ex1_2}.  From the numerical results, we see
clearly that increasing the partition integer $M$ can effectively
reduce the numerical error. 
  
\begin{table}
  \begin{tabular}{ c |c| c| c|c|c}
    \hline
    $\veps=\frac{1}{16}$ & $M=1$  & $M=2$ & $M=4$ &  $M=8$ & $M=16$ \\  \hline
    $\mathbb E(e_0)$ & 6.9485e-02 &5.5660e-02 &4.4370e-02 & 3.3320e-02 & 2.6339e-02 \\  \hline
\red{Conv. Rate }& & \red{0.3201}  &  \red{0.3271} &   \red{0.4132}  &  \red{0.3391}\\  \hline
    $\text{Var}(e_0)$& 4.9452e-04 & 2.3494e-04 & 1.5885e-04 &  8.7959e-05 &   5.5006e-05  \\  \hline
    $\mathbb E(e_1)$ & 5.7770e-02&   4.6380e-02&   3.8663e-02&   3.0728e-02&   2.5886e-02 \\  \hline
\red{Conv. Rate} & &  \red{0.3168}  &  \red{0.2626} &   \red{0.3314} &   \red{0.2474}\\  \hline
    $\text{Var}(e_1)$& 2.7984e-04&   1.3865e-04&   1.0308e-04&   6.0901e-05&   4.8058e-05 \\  
    \hline
  \end{tabular}
  \medskip

  \red{
  \begin{tabular}{ c |c| c| c|c|c}
    \hline
    $\veps=\frac{1}{32}$ & $M=1$  & $M=2$ & $M=4$ &  $M=8$ & $M=16$ \\  \hline
    $\mathbb E(e_0)$ & 5.6999e-02&   4.6254e-02&  3.5705e-02&   2.6951e-02&   2.0107e-02\\  \hline
Conv. Rate & & 0.3014  &  0.3735 &   0.4058  &  0.4227\\  \hline
    $\text{Var}(e_0)$&    2.8623e-04&   1.5798e-04&   1.0331e-04&   5.2099e-05&   2.8266e-05  \\  \hline
    $\mathbb E(e_1)$ & 5.2574e-02& 4.2267e-02&  3.3255e-02&   2.5514e-02&   2.0454e-02 \\  \hline
Conv. Rate & &0.3149  &  0.3460 &   0.3823 &   0.3189 \\  \hline
    $\text{Var}(e_1)$& 1.8928e-04&      1.2575e-04&   6.7133e-05&   4.0982e-05&   2.6102e-05 \\  
    \hline
  \end{tabular}
}

  \medskip

  \red{
  \begin{tabular}{ c |c| c| c|c|c}
    \hline
    $\veps=\frac{1}{64}$ & $M=1$  & $M=2$ & $M=4$ &  $M=8$ & $M=16$ \\  \hline
    $\mathbb E(e_0)$ & 4.8308e-02&   3.8534e-02&    2.9118e-02&   2.2112e-02&   1.6811e-02\\  \hline
Conv. Rate & &  0.3261  &  0.4042 &   0.3970 &   0.3955 \\  \hline
    $\text{Var}(e_0)$& 1.9927e-04&   9.3424e-05&   5.0794e-05&   3.1704e-05&   1.7614e-05  \\  \hline
    $\mathbb E(e_1)$ &4.6589e-02&   3.6833e-02&   2.8203e-02&   2.1660e-02&   1.6785e-02 \\  \hline
Conv. Rate & & 0.3390  &  0.3852  &  0.3808   & 0.3678 \\  \hline
    $\text{Var}(e_1)$& 1.5864e-04&   8.1220e-05&   5.4446e-05&   2.6814e-05&   1.5051e-05 \\  
    \hline
  \end{tabular}
  \medskip
}
  
  \caption{\red{(Example 3)} For various $\veps$ and partition integers $M$, the empirical averages and sample variance of the total numerical error based on $400$ implementations for each test.}
  \label{ex1data}
\end{table}

  \begin{figure}
\includegraphics[scale=0.55]{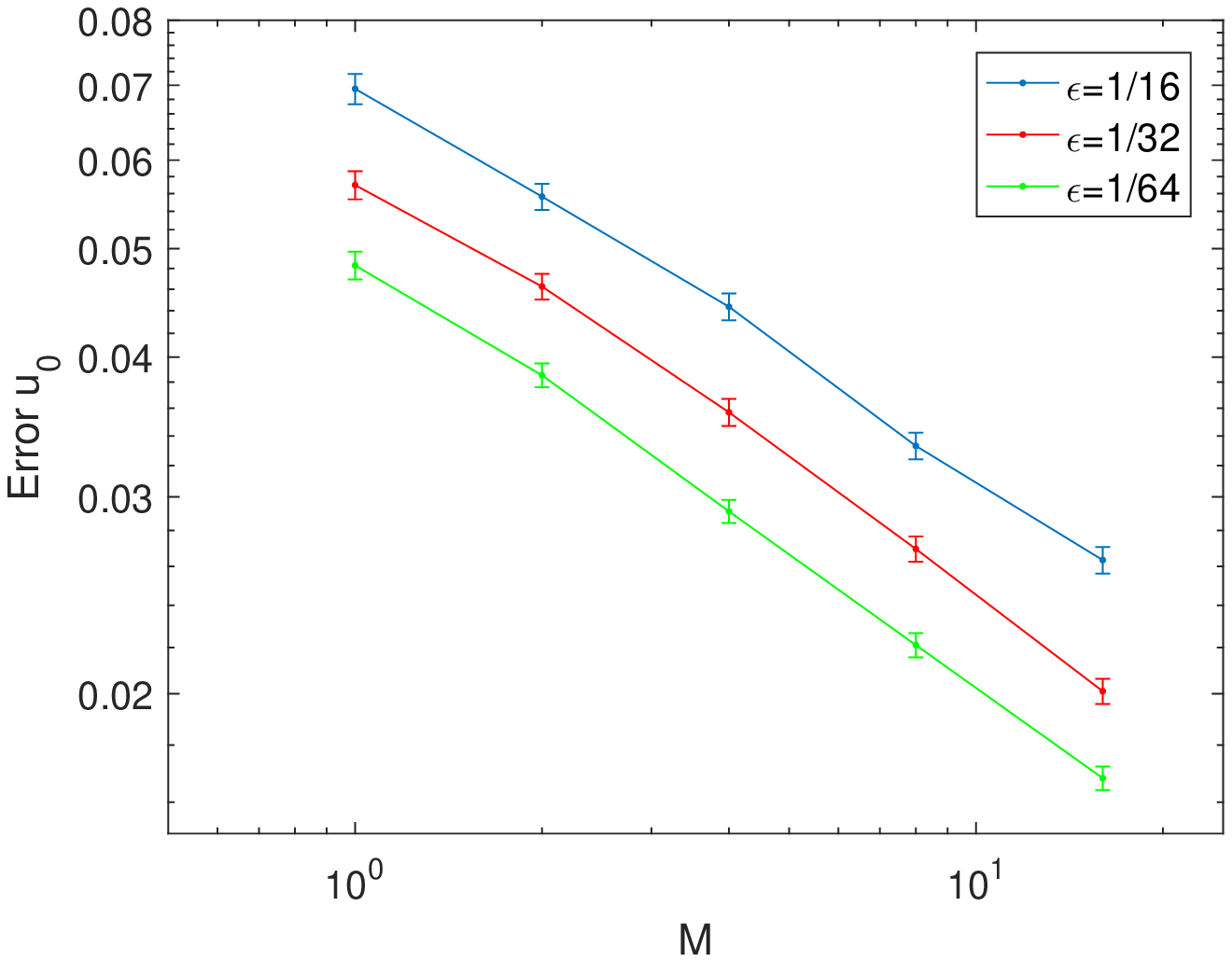} \includegraphics[scale=0.55]{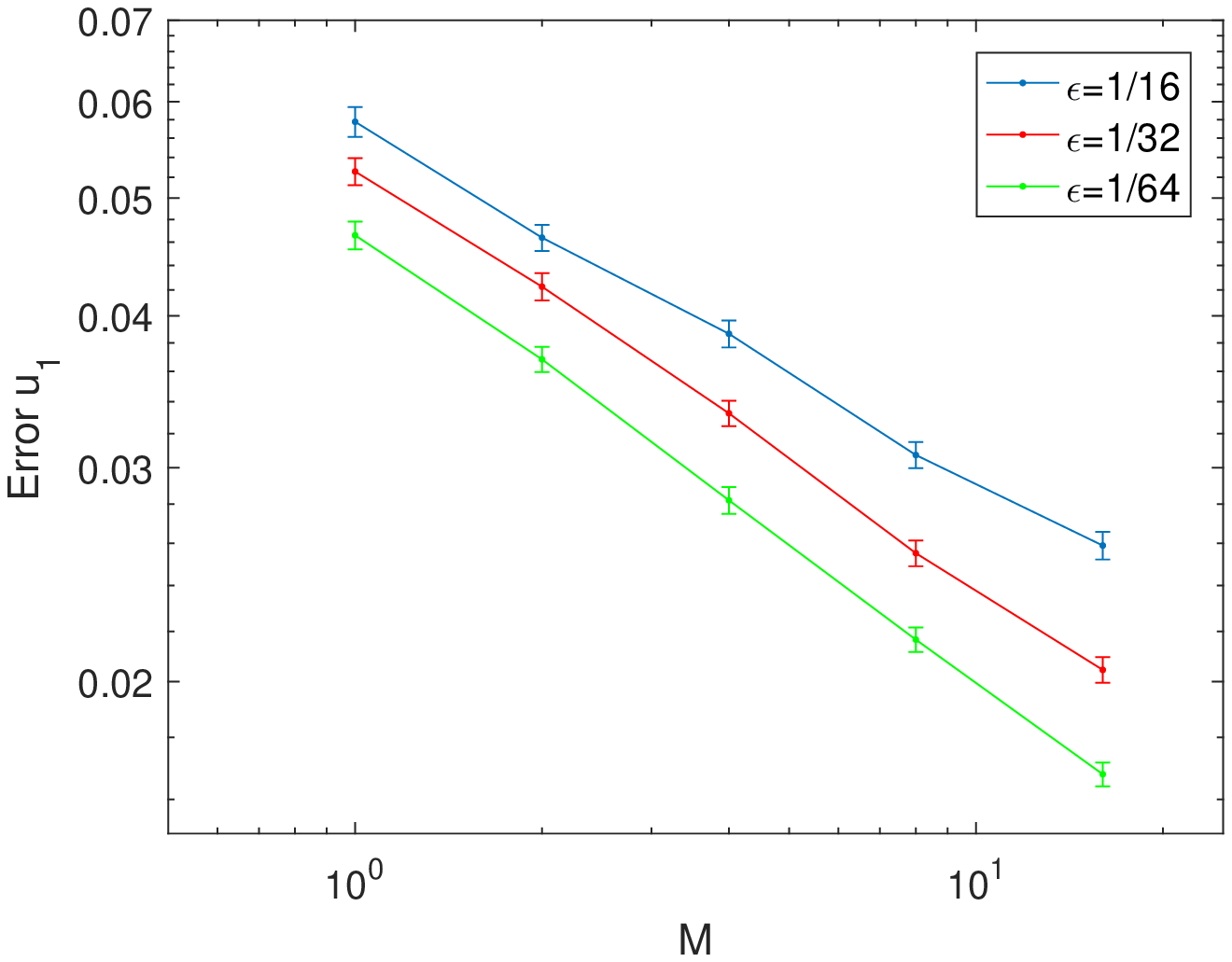}\\
\caption{\red{(Example 3)} For various $\veps$ and partition integers $M$, the empirical averages of the total numerical error with 95\% confidence intervals.}
\label{fig:ex1_2}
\end{figure}

\red{ Finally, we aim to demonstrate the application of the
  FGA-SH method in calculating the transition rate versus time.  For
  $\veps=\frac{1}{16}$ and $\frac{1}{128}$, we carry out the test with
  $N_{\text{traj}}=6400$ trajectories, and calculate the transition
  rates at different times till $t=1.5$. The results are plotted in
  Figure \ref{fig:ex1_3}, from which we observe very nice agreements
  with the reference calculations.}

  \begin{figure}
\includegraphics[scale=0.55]{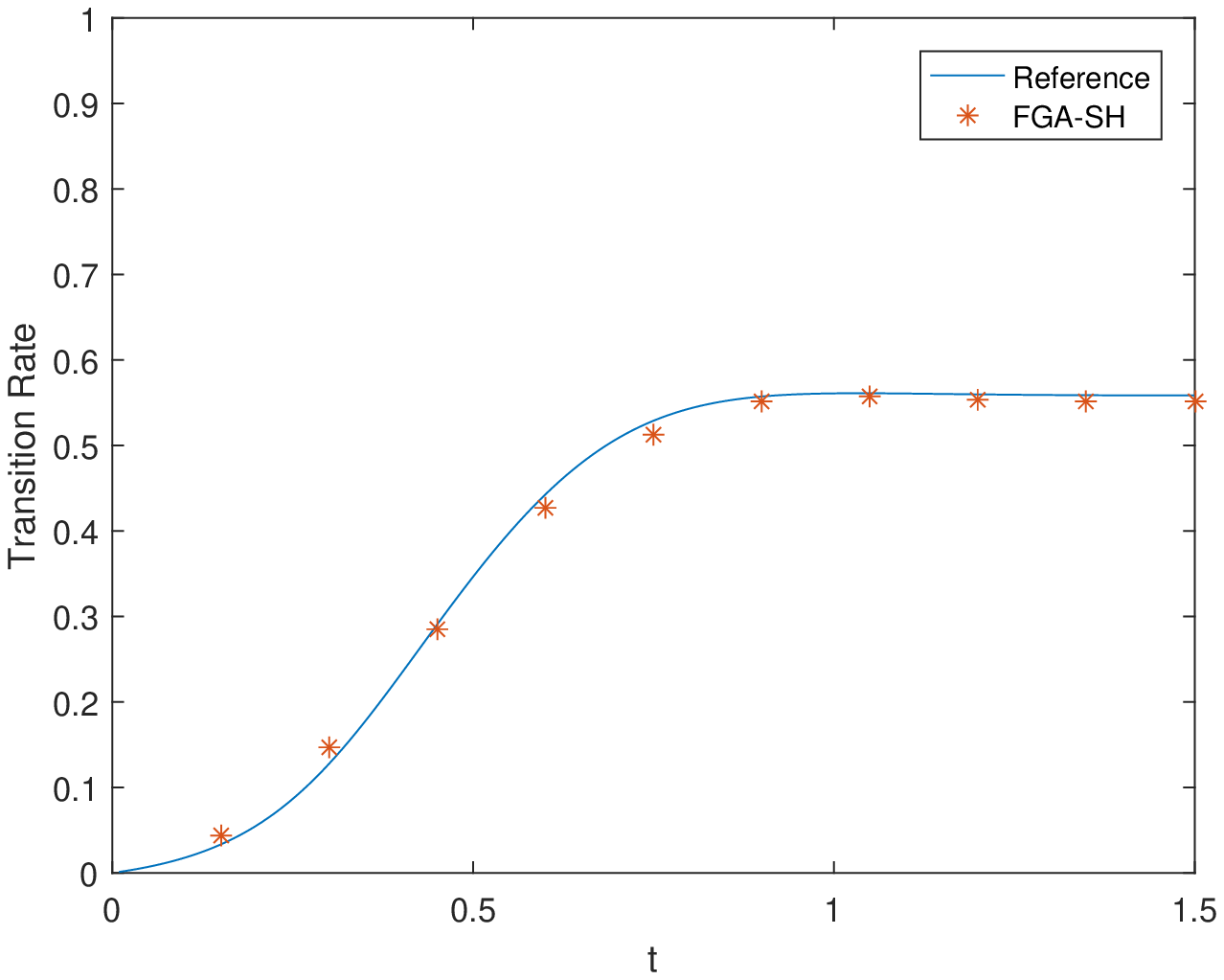} \includegraphics[scale=0.55]{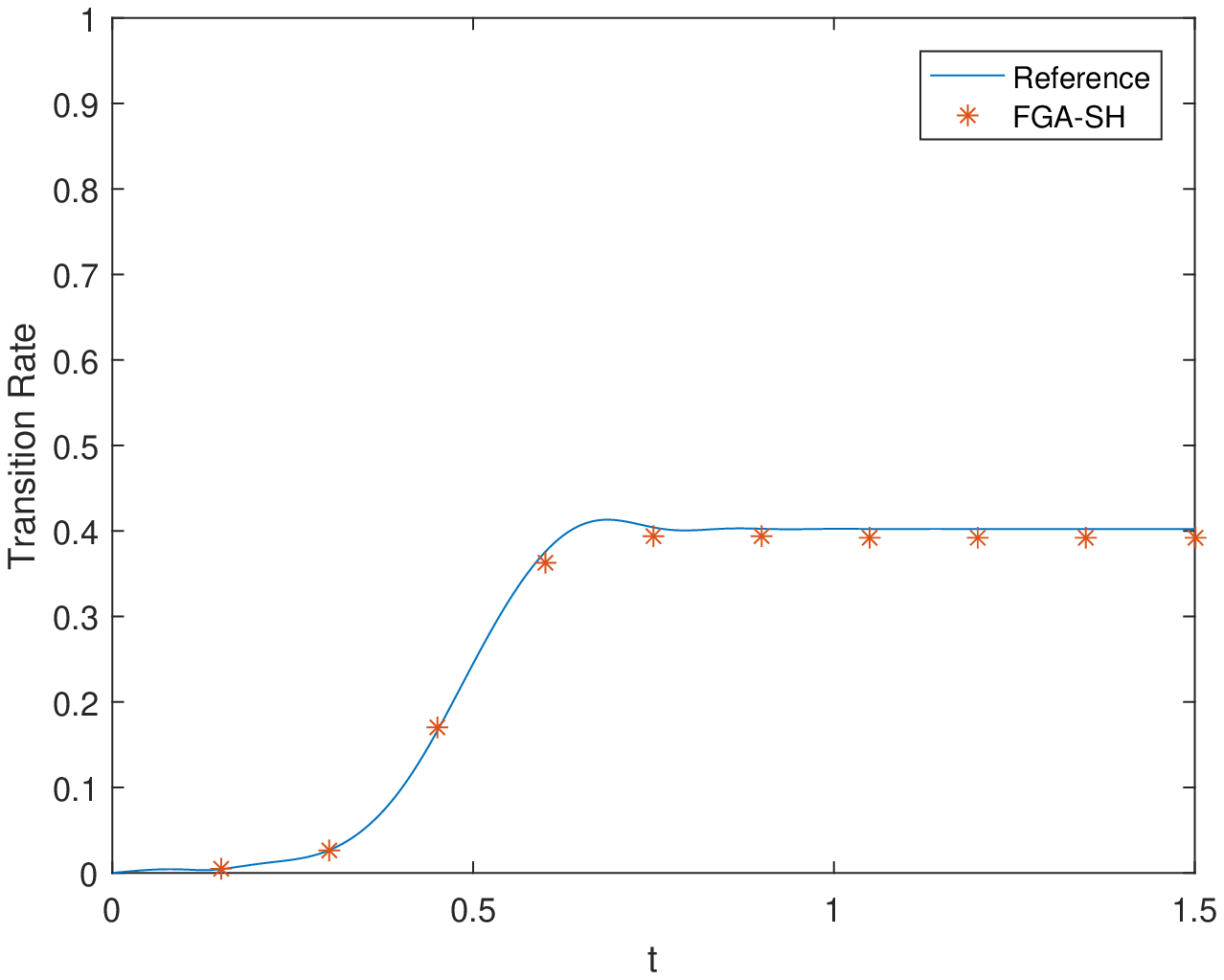}\\
\caption{\red{(Example 3) For various $\veps$, the typical behavior of the FGA-SH
  method in calculating the transition rate versus time. Left:
  $\veps= \frac{1}{16}$. Right: $\veps=
  \frac{1}{128}$. }}
\label{fig:ex1_3}
\end{figure}

\noindent {\bf Example 4.} In this example, the electronic Hamiltonian $H_e(x)$ is given by
\begin{equation}
H_e(x)=\begin{pmatrix}
    \frac{x}{5} & \frac {1} {10} \\
    \frac {1} {10} & -\frac{x}{5}
  \end{pmatrix}.
\end{equation}
This matrix potential is similar to the Example 2 in
Section~\ref{sec:example} except that the we have fixed a small
$\delta$ as $\veps$
varies. 
Hence, as $\veps$ goes to $0$, the energy surfaces of the electronic
Hamiltonian stay unchanged. Thus, the FGA method applies to this
case. We plot the energy surfaces, $d_{01}$ and $D_{01}$ of this matrix
potential in Figure~\ref{fig:ex4}, from which we observe that the
coupling vector is not negligible around $x=0$.

\begin{figure}
\includegraphics[scale=0.5]{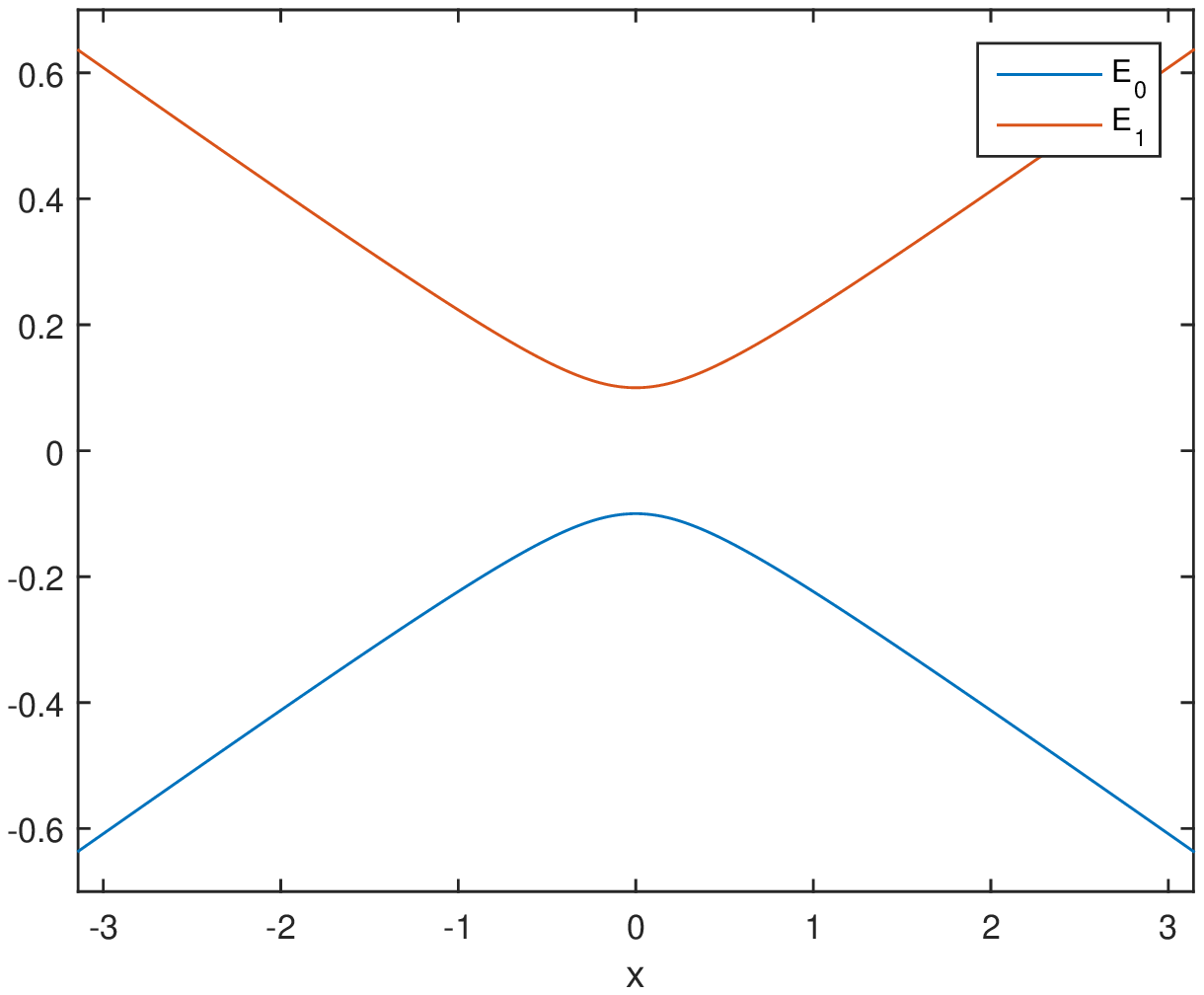}\includegraphics[scale=0.5]{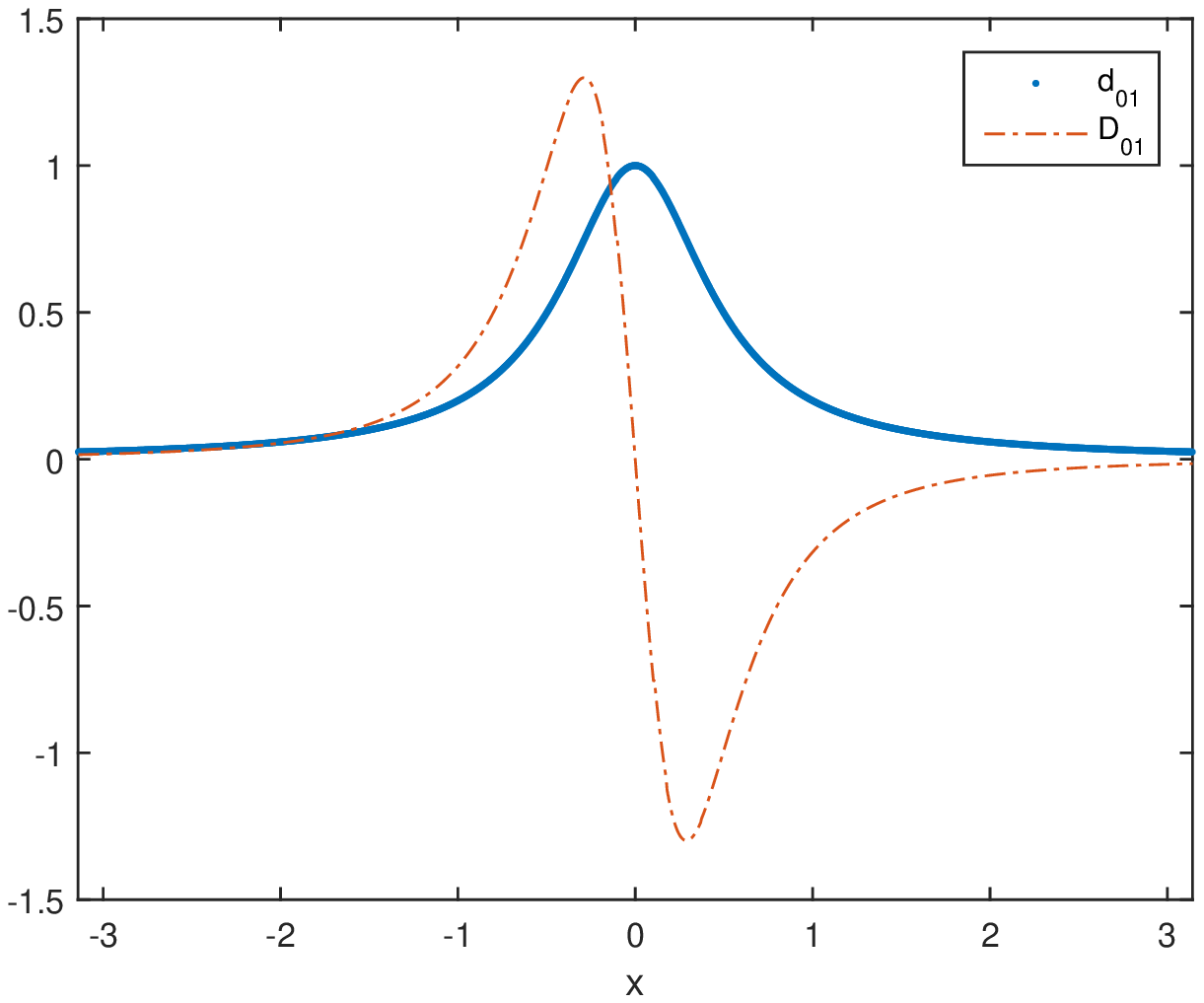} \\
\caption{(Example 4) Left: Eigenvalues of $H_e$. Right:
  the coupling information of $H_e$.}
\label{fig:ex4}
\end{figure}

We choose the same initial condition to the two-state Schr\"odinger equation
\[
u(0,r,x)=u_0(0,x)\Psi_0(r;x)=\exp\left(\frac{i2x}{\veps}\right) \exp \left(- 16(x-1)^2 \right)\Psi_0(r;x),
\]
and the same computation domain and meshing sizes for the FGA algorithm and reference solver as Example 3. For $\veps=\frac{1}{16}$ and $\frac{1}{128}$, and for the partition integer $16$, we use the FGA algorithm to compute the $u_0(t,x)$ and $u_1(t,x)$ till $t=1$. 

The reference solution indicates that when $\veps=\frac{1}{16}$, the
transition portion is significant but when $\veps=\frac{1}{128}$ the
transition between the surfaces is practically small. This is expected as the
 gap is finite and fixed, while $\veps \rightarrow 0$, so that the
non-adiabatic transition is approaching $0$ (see \textit{e.g.},
\cites{BO1,BO2}).

Whereas, in the FGA algorithm, the hopping rate is only related to the
coupling vectors and the momentum along the FGA trajectory. Therefore,
for $\veps=\frac{1}{16}$ and $\frac{1}{128}$, the hopping
probabilities are similar along the FGA trajectory, but when
$\veps=\frac{1}{128}$, the hopped trajectories on the exited state
should average to $0$, which is verified by the numerical results
plotted in Figure~\ref{fig:ex2_1} together with the reference
solution. Besides, we show by comparing the reference solutions in Figure~\ref{fig:ex2_1} that the weighting factors in \eqref{eq:trajavg} are crucial in reconstructing the correct wave functions.

\begin{figure}
\includegraphics[scale=0.5]{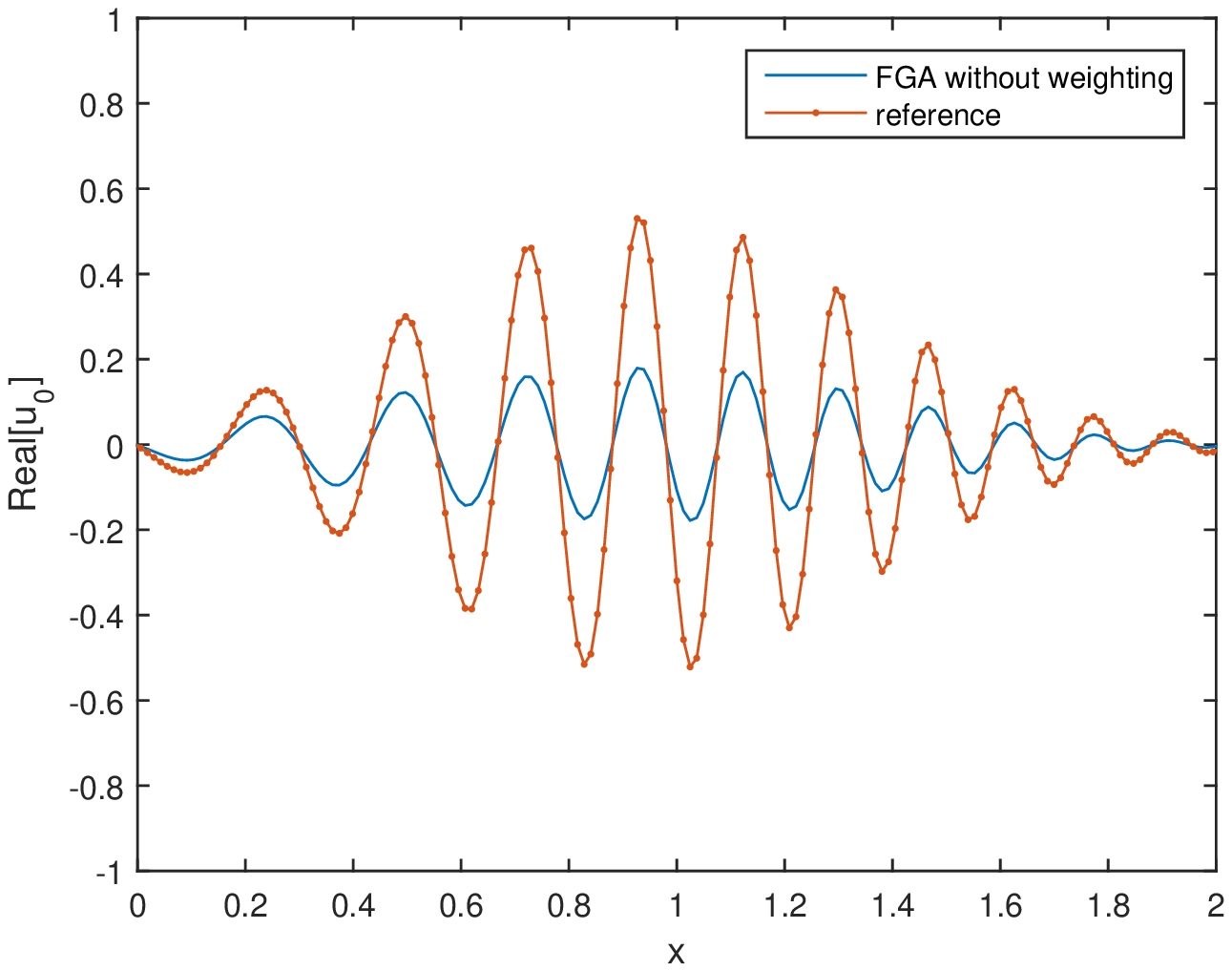} \includegraphics[scale=0.5]{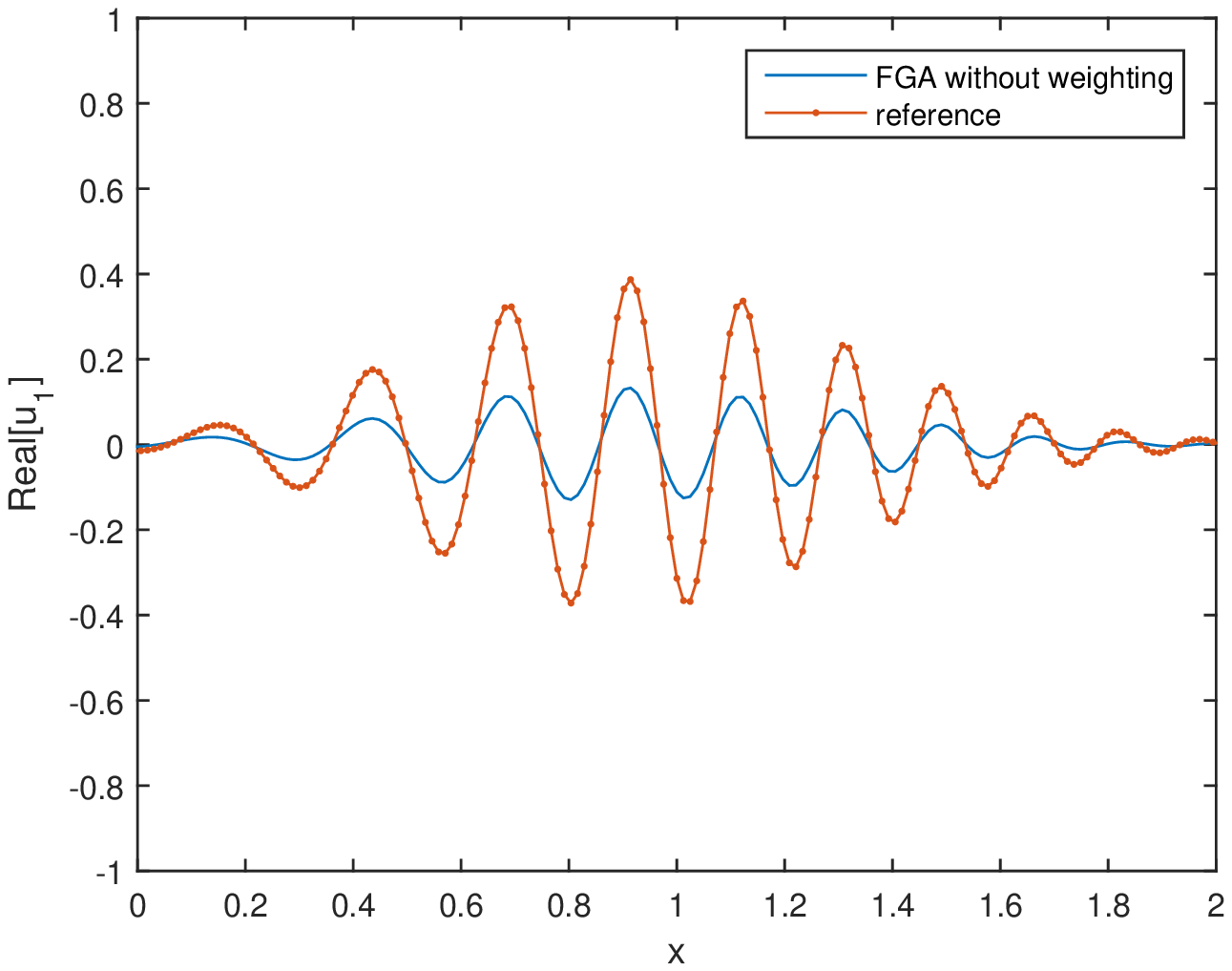}\\
\includegraphics[scale=0.5]{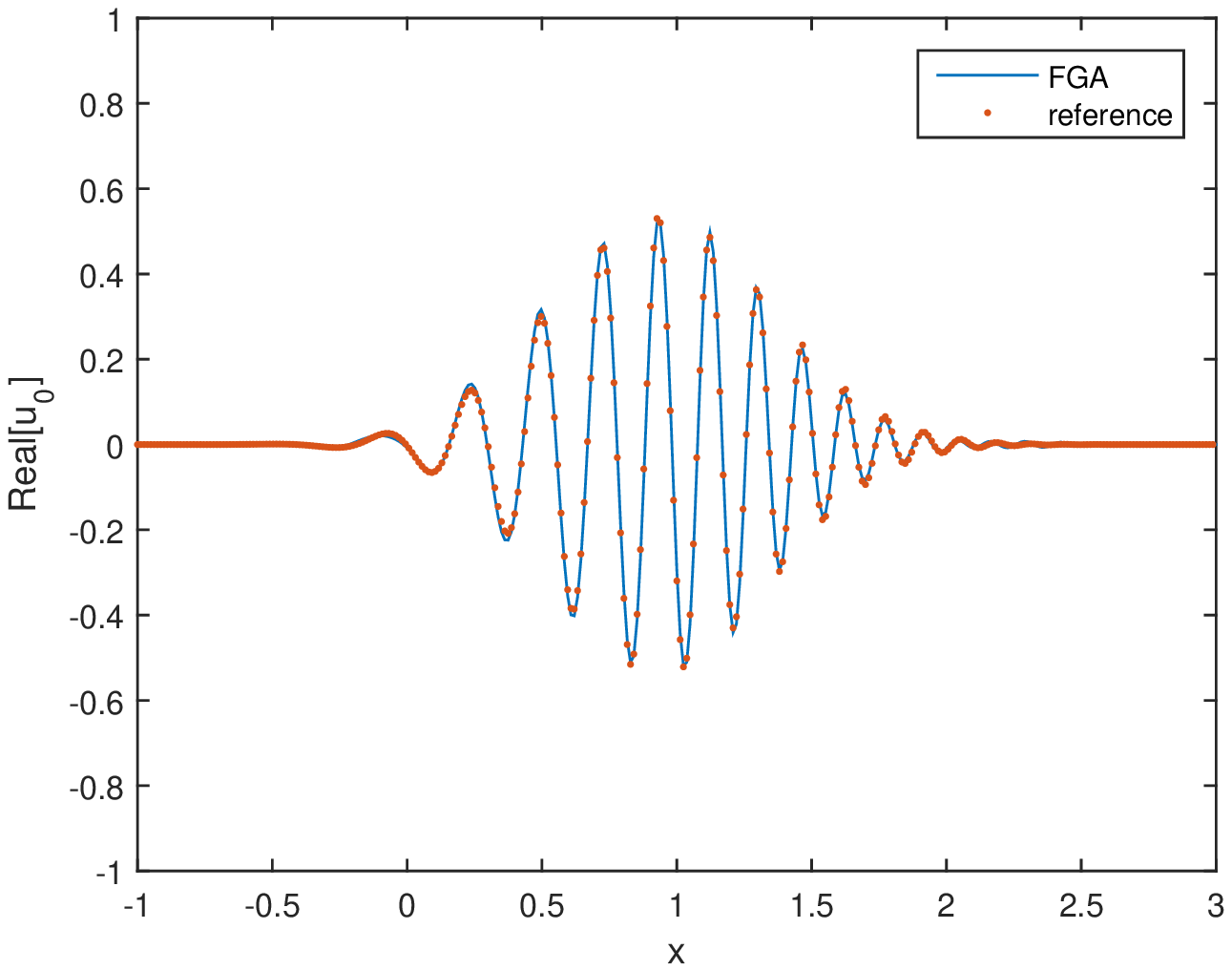} \includegraphics[scale=0.5]{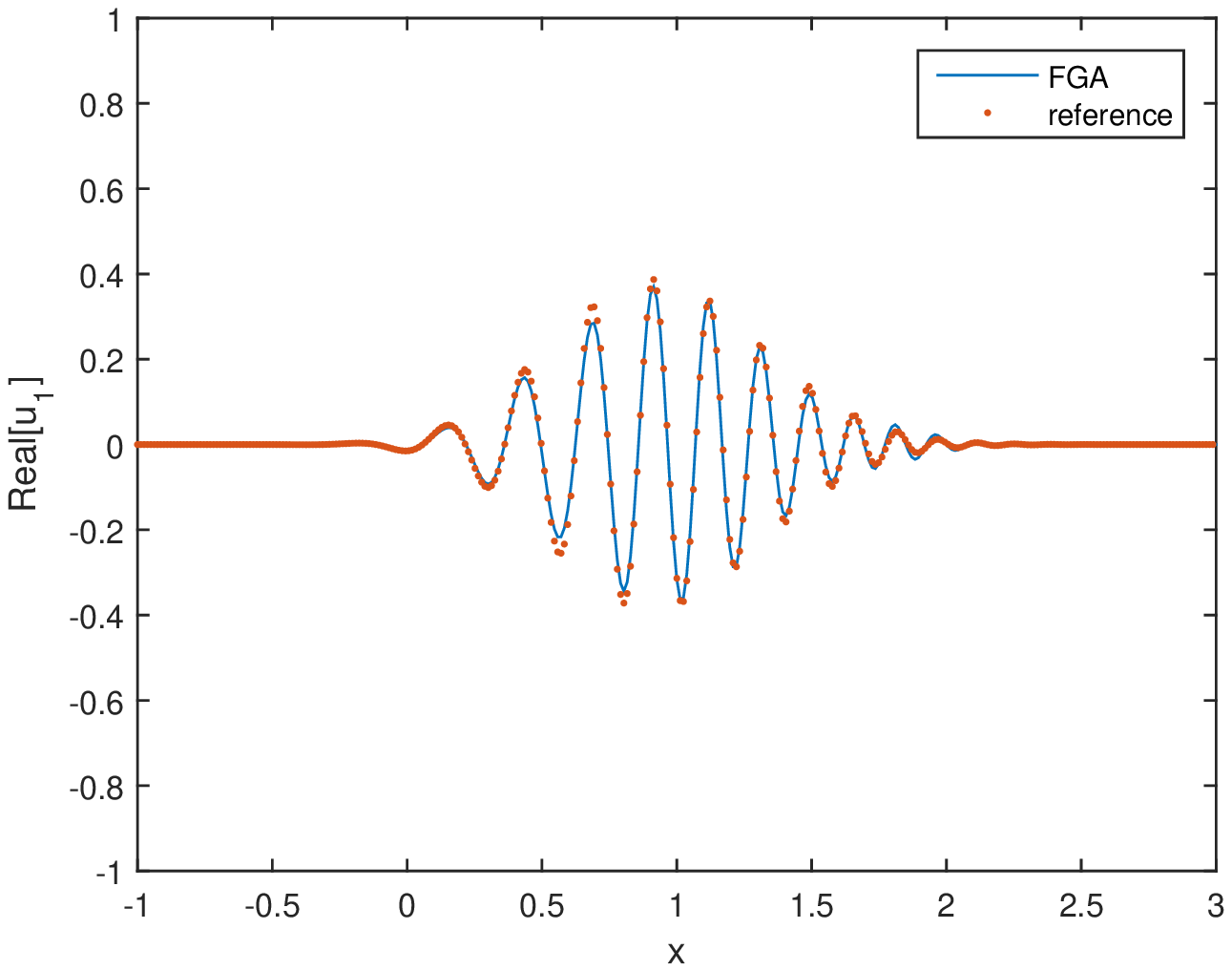}\\
\includegraphics[scale=0.5]{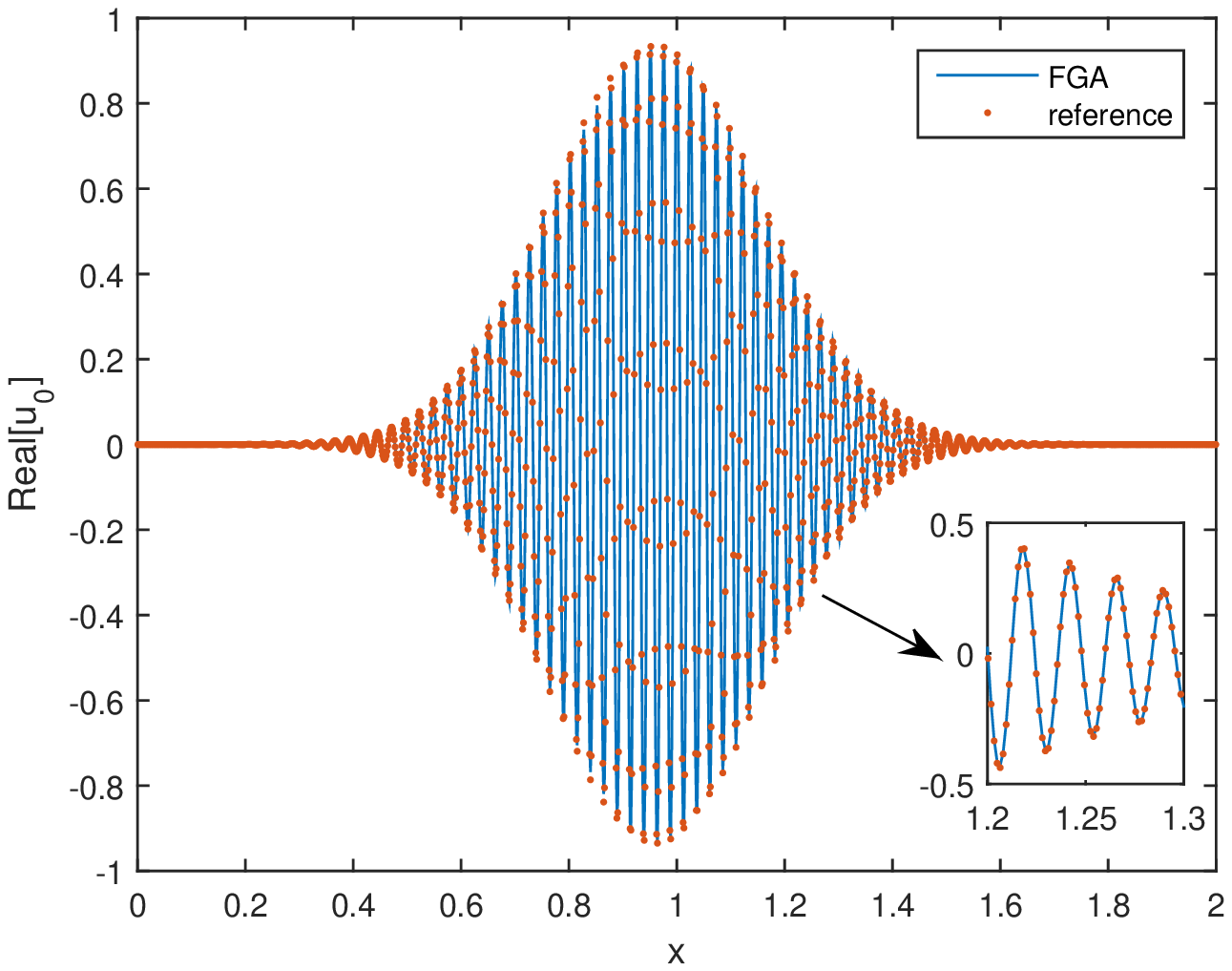} \includegraphics[scale=0.5]{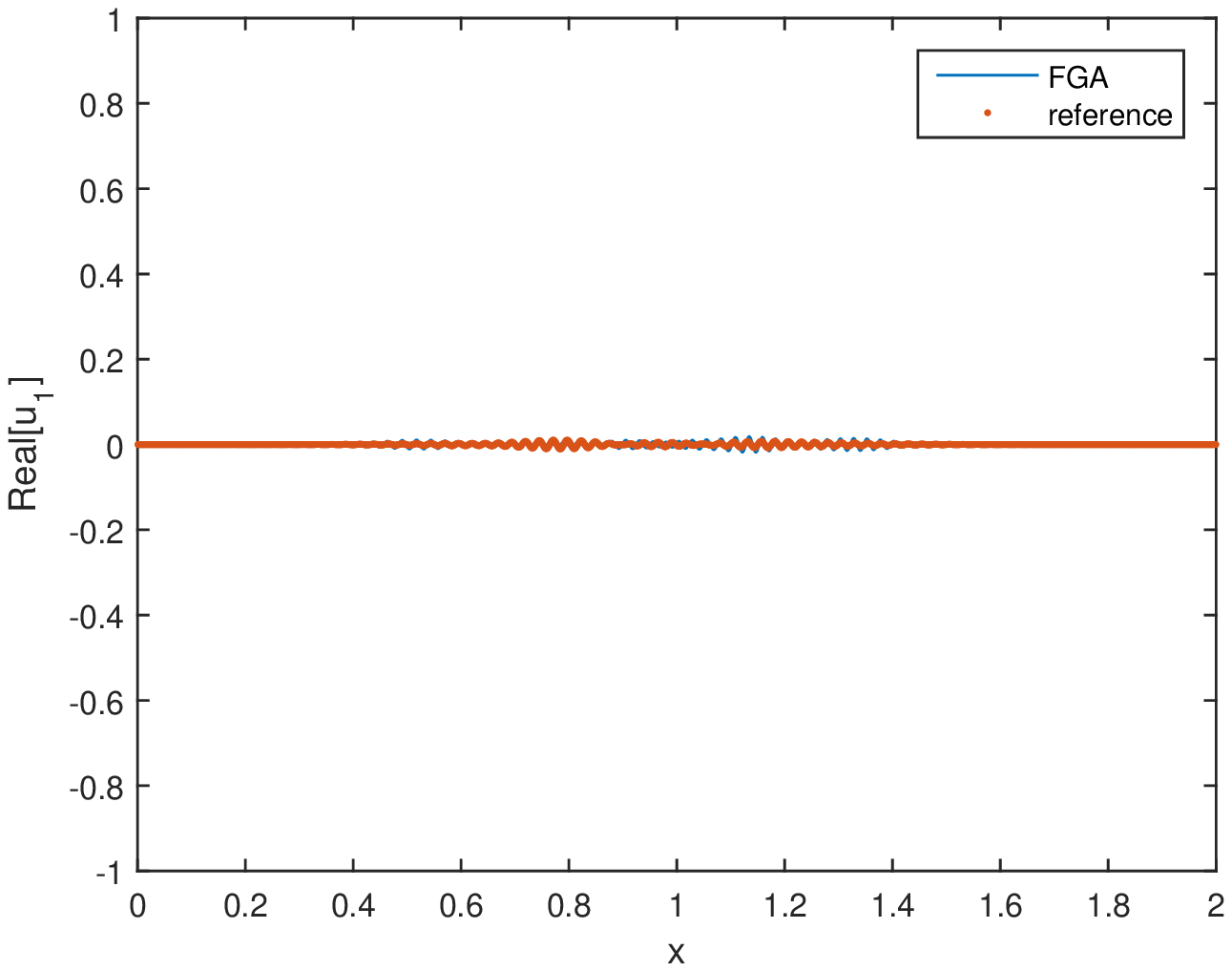}\\
\caption{\red{(Example 4)} Comparison between the FGA algorithm and the reference solutions. Top: $\veps=\frac{1}{16}$, the FGA method is implemeted without the weighting factors.  Middle: $\veps=\frac{1}{16}$. Bottom: $\veps=\frac{1}{128}$, zoomed-in plots included.}
\label{fig:ex2_1}
\end{figure}


\section{Convergence proof} \label{sec:proof}

We now prove that the ansatz is a good approximation to the true
solution of the matrix Schr\"odinger equation.  For simplicity of
notations, we omit the appearance of $\delta$ in the surface energy
and coupling vectors. By Assumption \ref{assumb}, the boundedness of
related quantities in the following analysis is uniform with respect
to $\delta$, and hence all the estimates below are independent of
$\delta$. In \S\ref{sec:prelim}, we study the trajectories, which
follow Hamiltonian flows on each energy surfaces with hopping between
surfaces. The absolute convergence of the infinite sum used in the
surface hopping ansatz is shown in \S\ref{sec:absconv}. Finally, in \S\ref{sec:error}, we prove the main convergence result Theorem~\ref{thm:main}.

\subsection{Preliminaries}\label{sec:prelim}

To study the absolute convergence of the FGA with surface hopping
ansatz, we fix a time $t$ and recall that
\begin{equation}\label{eq:FGAU}
  U_{\FGA}(t, x)  = \sum_{k=0}^{\infty} \begin{pmatrix}
    u^{(2k)}  \\
    u^{(2k+1)}
  \end{pmatrix}.
\end{equation} 
For convenience of the readers, we also recall
\begin{multline}
  u^{(j)}(t, x)  = \frac{1}{(2\pi\veps)^{3m/2}} \int_{K} \ud z_0 \int_{0<t_1<\cdots<t_j<t} d T_{j:1} \; \tau^{(1)}(T_{1:1}, z_0)\cdots \tau^{(j)} (T_{j:1},z_0) \times \\
  \times A^{(j)}(t, T_{j:1}, z_0) \exp\left( \frac{i}{\veps}
    \Theta^{(j)}(t,T_{j:1}, z_0, x) \right),
\end{multline}
which is an integration over all possible $j$ hopping times $t_1,
\cdots, t_j$. Notice that as discussed above Theorem~\ref{thm:main},
we restrict the domain of integration on the phase space to $K$.

Also recall that the FGA variables in the integrand of each $u^{(j)}$
are evolved piecewisely to final time $t$. To be more specific, the
hopping time sequence $\{t_k\}_{k=1,\cdots, j}$ defines a partition of
the interval $[0,t]$, $0\le t_1\le\cdots\le t_j\le t$, such that
within each interval, the FGA trajectory and associated variables
evolve on a single energy surface, and at hopping times
$\{t_k\}_{k=1,\cdots, j}$ switch to another surface with the
continuity conditions \eqref{eq:contcond}.

We remark that since we study here the case with two energy surfaces,
it suffices to specify the hopping times to uniquely determine the
trajectory. In general, for more energy surfaces, besides the hopping
time, we also need to track  which surface the trajectory hops to
(which makes the notations more complicated).

Let us first collect some properties of the Hamiltonian flow with surface
hopping. Given the hopping times $T_{j:1} = \{t_j, \cdots, t_1\}$, we
denote the map on the phase space from initial time $0$ to time $t$ by  $\kappa_{t, T_{j:1}}$ ($t$ can be smaller than $t_j$ here):
\begin{align*}
\kappa_{t, T_{j:1}}: \quad   \R^{2m} & \rightarrow \R^{2m} \\
(q,p) & \longmapsto   \left(Q^{\kappa_{t, T_{j:1}}}(q,p), P^{\kappa_{t, T_{j:1}}}(q,p) \right), 
\end{align*}
such that 
\begin{equation}
\bigl(Q^{\kappa_{t, T_{j:1}}}(q,p), P^{\kappa_{t, T_{j:1}}}(q,p) \bigr)
= \begin{cases}
  \bigl(Q^{(0)}(t, q, p), P^{(0)}(t, q, p)\bigr), & t \le T_1; \\
  \bigl(Q^{(i)}(t, T_{i:1}, q, p), P^{(i)}(t, T_{i:1}, q, p)\bigr), & t \in [T_i, T_{i+1}], i \in \{1, \ldots, j\}; \\
  \bigl(Q^{(j)}(t, T_{j:1}, q, p), P^{(j)}(t, T_{j:1}, q, p)\bigr), & t \ge T_j,
  \end{cases}
\end{equation}
where the trajectory follows the Hamiltonian flow on one of the energy
surface and hops to the other at the hopping times.  Let us emphasize
that, due to the continuity condition \eqref{eq:contcond}, even with
surface hopping, the trajectory $(Q, P)$ is still continuous on the
phase space as a function of $t$.

The following proposition states that for any possible number of hops
and sequence of hopping time, the trajectory under $\kappa_{t, T}$
remains (uniformly) in a  compact set.
\begin{proposition}\label{prop:comp}
  Given $t>0$ and a compact subset $K \subset \R^{2m}$, there exists a
  compact set $K_t \subset \R^{2m}$, such that $\forall j \in \N$,
  $\forall \delta > 0$ and any sequence of hopping times
  $T_{j:1} \subset [0,t]$
  \begin{equation}
    \kappa_{t, T_{j:1}}(K) \subset K_t, 
  \end{equation}
  namely, for any $(q, p) \in K$ and any $s \in [0, t]$ 
  \begin{equation}\label{comp2}
    \left(Q^{\kappa_{s, T_{j:1}}}(q,p), P^{\kappa_{s, T_{j:1}}}(q,p) \right) \in K_{t}.
  \end{equation}
\end{proposition}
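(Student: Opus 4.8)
The plan is to control the trajectory by a Gronwall argument applied to $|z_s|^2$, using the subquadratic bound from Assumption~\ref{assuma}, and observing that all the constants involved are independent of which energy surface the trajectory currently evolves on, of the number of hops $j$, and of the hopping sequence $T_{j:1}$. To set up, fix $(q,p)\in K$ and a hopping sequence $T_{j:1}=(t_j,\ldots,t_1)\subset[0,t]$, and write $z_s=(Q_s,P_s)=\bigl(Q^{\kappa_{s,T_{j:1}}}(q,p),P^{\kappa_{s,T_{j:1}}}(q,p)\bigr)$. By the continuity condition \eqref{eq:contcond}, the map $s\mapsto z_s$ is continuous on $[0,t]$; and on each subinterval $(t_k,t_{k+1})$ (with the convention $t_0=0$, $t_{j+1}=t$) it is $C^1$ and solves the Hamiltonian system $\dot Q_s=P_s$, $\dot P_s=-\nabla E_{l^{(k)}}(Q_s)$ with the appropriate label $l^{(k)}\in\{0,1\}$, the global solvability being guaranteed by the subquadratic condition. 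Hence $g(s):=|z_s|^2$ is continuous on $[0,t]$ and piecewise $C^1$ with respect to the partition induced by $T_{j:1}$.

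On each such subinterval,
\[
\frac{\ud}{\ud s}g(s)=2\,Q_s\cdot P_s-2\,P_s\cdot\nabla E_{l^{(k)}}(Q_s),
\]
and using Cauchy--Schwarz together with the consequence $\bigl|\nabla_q E^\delta_l(q)\bigr|\le C_E(|q|+1)$ of \eqref{cond:subqua}, this is bounded by $C\,(g(s)+1)$ for a constant $C$ depending only on $C_E$ --- in particular uniform in $\delta$, in the label $l^{(k)}$, and in $k$. Since $g$ is continuous and this bound holds on every subinterval, the integrated inequality $g(s)+1\le (g(0)+1)+C\int_0^s(g(\sigma)+1)\,\ud\sigma$ holds on all of $[0,t]$ regardless of the positions and number of the hopping times, and Gronwall's inequality gives $g(s)+1\le (R^2+1)e^{Ct}$ for all $s\in[0,t]$, where $R:=\sup_{(q,p)\in K}|(q,p)|<\infty$. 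Setting
\[
K_t:=\bigl\{z\in\R^{2m}:|z|^2\le (R^2+1)e^{Ct}-1\bigr\},
\]
which is compact and depends only on $K$, $t$, and $C_E$ (hence not on $\delta$, $j$, or $T_{j:1}$), we obtain $z_s\in K_t$ for all $s\in[0,t]$ and all $(q,p)\in K$, which is precisely \eqref{comp2}.

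I do not expect a genuine obstacle here; the only point requiring care is that the Gronwall estimate must survive being pieced together across hopping times, which is exactly why the continuity condition \eqref{eq:contcond} (rather than, say, a momentum rescaling at hops, as in FSSH) is essential: it guarantees $g$ is continuous, so the piecewise differential inequalities concatenate into a single global one with a constant that does not degrade with the number of hops. The $\delta$-uniformity is automatic since $C_E$ in Assumption~\ref{assuma} is uniform in $\delta$; and once $K_t$ is fixed, Assumption~\ref{assumb} guarantees that $E_k$, $d_{kl}$, $D_{kl}$ and their derivatives are bounded on $K_t$, uniformly in $\delta$, which is what is needed for the subsequent absolute convergence and error estimates.
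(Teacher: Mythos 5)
Your proposal is correct and follows essentially the same route as the paper: both apply a Gronwall-type differential inequality to $\Abs{Q^{\kappa_{s,T_{j:1}}}}^2+\Abs{P^{\kappa_{s,T_{j:1}}}}^2$, using the linear growth of $\nabla_q E^\delta_k$ implied by the subquadratic condition \eqref{cond:subqua} and the uniformity of $C_E$ in $\delta$, the energy-surface label, and the hopping sequence. Your explicit remark that continuity of the trajectory at hopping times (condition \eqref{eq:contcond}) is what lets the piecewise inequalities concatenate into a single global estimate is a point the paper leaves implicit, but the substance is the same.
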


\begin{proof}
Fix an arbitrary sequence of hopping time $T_{j:1} \subset [0, t]$.
For any time $s \in [0, t]$ which belongs to one of the interval
$(t_k, t_{k+1})$ for $ k = 0, \cdots, j$ (we identify $t_0 = 0$ and
$t_{j+1} = t$). Denote the index of the energy surface during the time
interval $(t_k, t_{k+1})$ as $l_k$, we then have 
\begin{align*}
  & \frac{\ud}{\ud s}\Abs{P^{\kappa_{s,T_{j:1}}}} = \frac{P^{\kappa_{s,T_{j:1}}}}{\Abs{P^{\kappa_{s,T_{j:1}}}}} \cdot  \frac{\ud}{\ud s} P^{\kappa_{s,T_{j:1}}} = - \frac{P^{\kappa_{s,T_{j:1}}} \cdot \nabla_Q E_{l_k}}{\Abs{P^{\kappa_{s,T_{j:1}}}}} \le C_E \Bigl(\Abs{Q^{\kappa_{s, T_{j:1}}}} + 1\Bigr), \\
  & \frac{\ud}{\ud s}\Abs{Q^{\kappa_{s,T_{j:1}}}} = \frac{Q^{\kappa_{s,T_{j:1}}}}{\Abs{Q^{\kappa_{s,T_{j:1}}}}}  \cdot \frac{\ud}{\ud s} Q^{\kappa_{s,T_{j:1}}} = \frac{Q^{\kappa_{s,T_{j:1}}} \cdot P^{\kappa_{s,T_{j:1}}}}{\Abs{Q^{\kappa_{s,T_{j:1}}}}} \le \Abs{P^{\kappa_{s,T_{j:1}}}}, 
\end{align*}
where we have used the subquadraticity of the Hamiltonian by
Assumption~\ref{assuma} (recall that $C_E$ is uniform with respect to
$\delta$). Therefore,
\begin{equation*}
  \frac{\ud}{\ud s} \Bigl( \Abs{P^{\kappa_{s,T_{j:1}}}}^2 + \Abs{Q^{\kappa_{s,T_{j:1}}}}^2 \Bigr) \le 2 (C_E + 1) \Abs{P^{\kappa_{s,T_{j:1}}}} \Bigl( \Abs{Q^{\kappa_{s,T_{j:1}}}} + 1 \Bigr)\le 2 (C_E + 1) \Bigl( \Abs{P^{\kappa_{s,T_{j:1}}}}^2 + \Abs{Q^{\kappa_{s,T_{j:1}}}}^2  + 1\Bigr).
\end{equation*}
Here we emphasize that the constant on the right hand side is
universal in the sense that it does not depend on the particular
hopping time sequence. The conclusion of the Proposition follows
immediately from the differential inequality.
\end{proof}

As a corollary, since the trajectory uniformly stays in a compact set, given the final time $t$, we can take a constant $C_{\tau}$ such that the following estimate holds 
\begin{equation} \label{assum1} \sup_{z_0\in K, T_{n:1} \subset [0,t],
    n\in \N^+, \, i = 0, 1} |\tau_i^{(n)}(T_{n:1}, z_0)| \le \sup_{z
    \in K_t} \max\bigl\{ \abs{p \cdot d^\delta_{10}(q)}, \abs{p \cdot
    d^\delta_{01}(q)} \bigr\} \leq C_{\tau},
\end{equation}
where the second inequality uses Assumption~\ref{assumb} and recall
that the constants are uniform with respect to $\delta$.  Thus,
the coupling coefficient stays $\Or(1)$ along all possible FGA
trajectories.

For a transformation of the phase space $\kappa: \R^{2m} \to \R^{2m}$,
we denote its Jacobian matrix as
\begin{equation}\label{jacobi}
  J^{\kappa}(q,p)=   
  \begin{pmatrix}
    \left(\partial_q Q^{\kappa} \right)^T (q,p) & \left(\partial_p Q^{\kappa} \right)^T (q,p)  \\
    \left(\partial_q P^{\kappa} \right)^T (q,p) & \left(\partial_p
      P^{\kappa} \right)^T (q,p)
  \end{pmatrix}.
\end{equation}
We say the transform $\kappa$ is a \emph{canonical} if
$J^{\kappa}$ is symplectic for any $(q,p)\in \R^{2m}$, namely,
\begin{equation}\label{cond:symp}
\left( J^{\kappa} \right)^T 
\begin{pmatrix} 
0 &  I_m \\
-I_{m} & 0
\end{pmatrix} J^{\kappa} = 
\begin{pmatrix} 
0 &  I_m \\
-I_{m} & 0
\end{pmatrix}.
\end{equation}
Here, $I_m$ denotes the $m\times m$ identity matrix.

The map given by the FGA trajectories $\kappa_t$ is always canonical,
as stated in the following proposition, which also gives bounds of the
Jacobian and its derivatives.

\begin{proposition}
  Given $t>0$ and a compact subset $K \subset \R^{2m}$, the associated
  map $\kappa_{t, T_{j:1}}$ is a canonical transformation for any
  sequence of hopping times $T_{j:1}$, $\forall\, j$. Moreover, for
  any $k \in \N$, there exists a constant $C_{k}$ such that
  \begin{equation}\label{est:devF}
    \sup_{(q, p) \in K} \max_{\abs{\alpha_p} + \abs{\alpha_q} \le k} \Abs{ \partial_q^{\alpha_q} \partial_p^{\alpha_p} \bigl[J^{\kappa_{t, T_{j:1}}}(q, p)\bigr]} \le C_{k}, 
  \end{equation}
  uniformly for any $j \in \N$, any $\delta$ and any sequence of
  hopping times $T_{j:1}$.
\end{proposition}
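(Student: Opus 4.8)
The plan is to exploit the fact that, thanks to the continuity condition \eqref{eq:contcond}, both the position and the momentum are continuous across every hopping, so $\kappa_{t, T_{j:1}}$ is literally a composition of Hamiltonian flows on alternating surfaces,
\[
\kappa_{t, T_{j:1}} = \phi^{l_j}_{t - t_j} \circ \phi^{l_{j-1}}_{t_j - t_{j-1}} \circ \cdots \circ \phi^{l_0}_{t_1 - t_0},
\]
where $\phi^l_s$ is the time-$s$ flow of the classical Hamiltonian $h_l(q,p) = \tfrac12\abs{p}^2 + E_l(q)$, $l_k = k \bmod 2$, and $t_0 = 0$. Canonicity is then immediate, since each $\phi^l_s$ is canonical and a composition of canonical transformations is canonical. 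For the quantitative statement I would instead work with the variational equation: writing $z(s) = \kappa_{s, T_{j:1}}(q,p)$ and $\mathbb{J} = \bigl(\begin{smallmatrix} 0 & I_m \\ -I_m & 0\end{smallmatrix}\bigr)$, the Jacobian $J(s) := J^{\kappa_{s, T_{j:1}}}(q,p)$ satisfies $\frac{\ud}{\ud s} J(s) = \mathbb{J}\, \nabla^2 h_{l(s)}(z(s))\, J(s)$ with $J(0) = I_{2m}$ on each subinterval $(t_k, t_{k+1})$, where $l(s)$ is the piecewise-constant active-surface index. Differentiating \eqref{eq:contcond} in $(q,p)$ shows that $J(s)$ is continuous across the hopping times, so this is really a single absolutely continuous ODE on all of $[0,t]$; since $\mathbb{J}\nabla^2 h_l$ lies in $\mathfrak{sp}(2m)$, the matrix $J^T \mathbb{J} J$ is conserved, which re-proves \eqref{cond:symp}.

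For the bound \eqref{est:devF} the key point, and the thing that must be watched, is uniformity in the number of hops $j$: estimating $J$ as a product of $j+1$ flow Jacobians would give a bound blowing up exponentially in $j$. Instead I would observe that $J(s)$ is the solution of a \emph{single} ODE over the \emph{fixed} interval $[0,t]$ whose coefficient $\mathbb{J}\nabla^2 h_{l(s)}(z(s))$ is bounded by a constant $\overline{C}$ depending only on $m$ and $C_E$, uniformly in $s$, in the surface index, and in $\delta$, by the subquadratic Assumption~\ref{assuma}. Gronwall's inequality then yields $\sup_{(q,p)\in K}\sup_{s\in[0,t]}\Norm{J^{\kappa_{s, T_{j:1}}}(q,p)} \le e^{\overline{C} t}$ with no dependence on $j$ or on the hopping sequence, which is \eqref{est:devF} for $k = 0$.

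The higher derivatives ($k \ge 1$) I would handle by induction on $\abs{\alpha_q} + \abs{\alpha_p}$. Put $Y_\alpha(s) = \partial^{\alpha}_{(q,p)} z(s)$, so $Y_0(s) = z(s)$ stays in the compact set $K_t$ by Proposition~\ref{prop:comp}, the case $\abs{\alpha} = 1$ gives the columns of $J(s)$, and $Y_\alpha(0)$ equals $0$ for $\abs{\alpha} \ge 2$. Differentiating $\frac{\ud}{\ud s} z = \mathbb{J}\nabla h_{l(s)}(z)$ a total of $\alpha$ times and collecting terms via the Fa\`a di Bruno formula gives $\frac{\ud}{\ud s} Y_\alpha = \mathbb{J}\nabla^2 h_{l(s)}(z(s))\, Y_\alpha + G_\alpha(s)$, again continuous across hops, where $G_\alpha$ is a finite sum of products of factors $\nabla^r h_{l(s)}(z(s))$ with $2 \le r \le \abs{\alpha}+1$ and $Y_\beta(s)$ with $\abs{\beta} < \abs{\alpha}$. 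By Assumption~\ref{assumb} all derivatives of $E_l$ are bounded on $K_t$ uniformly in $\delta$, and by the inductive hypothesis every lower-order $Y_\beta$ is bounded on $[0,t]$ uniformly in $j$, $\delta$ and $T_{j:1}$; hence $\sup_s \Norm{G_\alpha(s)}$ is uniformly bounded, and one further Gronwall estimate (with $\Norm{Y_\alpha(0)}$ bounded since $(q,p)$ ranges over the compact set $K$) closes the induction and establishes \eqref{est:devF}. The only real obstacle is bookkeeping: ensuring at every stage that the constants are independent of the number and location of the hoppings --- which is exactly what the ``one ODE over $[0,t]$ with globally bounded coefficients'' viewpoint guarantees, and why the continuity condition \eqref{eq:contcond}, which glues the variational equations seamlessly, is indispensable.
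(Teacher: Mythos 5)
Your argument is correct and is essentially the paper's own proof: canonicity follows from the piecewise Hamiltonian flow together with the continuity condition \eqref{eq:contcond} at hops, and the bound \eqref{est:devF} comes from viewing $J$ as solving the single variational ODE \eqref{eq:F} on $[0,t]$ whose coefficient is bounded uniformly in the hopping sequence and $\delta$ by Assumption~\ref{assuma}, followed by Gronwall and an induction (using Assumption~\ref{assumb} and Proposition~\ref{prop:comp}) for the higher $(q,p)$-derivatives. The only difference is that you spell out the induction and the Fa\`a di Bruno bookkeeping that the paper omits as ``straightforward calculations,'' which is a welcome addition rather than a deviation.
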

\begin{proof} 
  Recall that the time evolution of $(Q^{\kappa}, P^{\kappa})$ is
  piecewisely defined in the time interval between hoppings, and
  remains continuous at the hopping times. During each time interval,
  the symplectic condition \eqref{cond:symp} is clearly satisfied by
  the Hamiltonian flow. The continuity condition guarantees the
  validity of symplectic relation at the hopping times. Therefore, the
  map $\kappa_{t,T_{j:1}}$ is a canonical transform.
 
  For any time $s \in [0, t]$ which belongs to one of the interval
  $(t_k, t_{k+1})$ for $ k = 0, \cdots, j$ (we identify $t_0 = 0$ and
  $t_{j+1} = t$). Denote the index of the energy surface during the
  time interval $(t_k, t_{k+1})$ as $l_k$, we then have by
  differentiating $J^{\kappa_{s,T_{j:1}}}$ with
  respect to $s$ 
  \begin{equation}\label{eq:F}
    \frac{\ud}{\ud s} J^{\kappa_{s,T_{j:1}}}= 
    \begin{pmatrix} 
      \partial_P\partial_Q H_{l_k} &  \partial_P\partial_P H_{l_k} \\
      -\partial_Q\partial_Q H_{l_k} & -\partial_Q\partial_P H_{l_k}
    \end{pmatrix} J^{\kappa_{s,T_{j:1}}}.
  \end{equation}
  Then, Assumption~\ref{assuma} implies there exists a constant $C$ such that
  \begin{equation}
    \frac{\ud}{\ud s} \Abs{J^{\kappa_{s,T_{j:1}}}}
    \le \left\lvert \begin{pmatrix}
        \partial_P\partial_Q H_{l_k} & \partial_P\partial_P H_{l_k} \\
        -\partial_Q\partial_Q H_{l_k} &  -\partial_Q\partial_P H_{l_k}
      \end{pmatrix} \right\rvert \Abs{J^{\kappa_{s,T_{j:1}}}} \le C
    \Abs{J^{\kappa_{s,T_{j:1}}}}.
  \end{equation}
  It is worth emphasizing that this constant $C$ is independent of the
  hopping time sequence and $\delta$. The boundedness of
  $|J^{\kappa_{t,T_{j:1}}}|$ then follows immediately from Gronwall's
  inequality and the fact that $|J^{\kappa_{0, T_{j:1}}}|=1$ since
  $\kappa_{0, T_{j:1}}$ is just an identity map. To get the estimate
  for derivatives of $J$, we differentiate the equation \eqref{eq:F}
  with respect to $(q, p)$ and use an induction argument, and we omit
  the straightforward calculations here.
\end{proof}

For a canonical transform $\kappa$, we define
\[
Z^{\kappa} (q,p) = \partial_z \left( Q^{\kappa} (q,p)+ i P^{\kappa}(q,p) \right),
\]
where $\partial_z = \partial_q - i \partial_p$. $Z^{\kappa}$ is a
complex valued $m \times m$ matrix.  By mimicking the proof of
\cite{FGA_Conv}*{Lemma 5.1} and the above Proposition, we obtain the following properties of $Z^{\kappa}$.
\begin{proposition}
  Given $t>0$ and a compact subset $K \subset \R^{2m}$, for any
  sequence of hopping times $T_{j:1}$, $\forall\, j$, $Z^{\kappa_{t,
      T_{j:1}}}$ is invertible. Moreover, for any $k \in \N$, there
  exists a constant $C_{k}$ such that
  \begin{equation}\label{est:devZ}
    \sup_{(q, p) \in K} \max_{\abs{\alpha_p} + \abs{\alpha_q} \le k} \Abs{ \partial_q^{\alpha_q} \partial_p^{\alpha_p} \bigl[ \bigl(Z^{\kappa_{t, T_{j:1}}}(q, p)\bigr)^{-1}\bigr]} \le C_{k}, 
  \end{equation}
  uniformly for any $j \in \N$, any $\delta > 0$ and any sequence of
  hopping times $T_{j:1}$.
\end{proposition}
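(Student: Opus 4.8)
The plan is to follow the argument for the single-surface frozen Gaussian approximation in \cite{FGA_Conv}*{Lemma 5.1}, the only new ingredient being that both the symplectic structure and the uniform estimates survive the hopping junctions. First observe that, from $Z^{\kappa_{t,T_{j:1}}} = \partial_z(Q^{\kappa}+iP^{\kappa}) = (\partial_q Q^{\kappa}+\partial_p P^{\kappa}) + i(\partial_q P^{\kappa}-\partial_p Q^{\kappa})$, the matrix $Z^{\kappa_{t,T_{j:1}}}$ is a fixed linear combination of (transposes of) the four blocks of the Jacobian $J^{\kappa_{t,T_{j:1}}}$ in \eqref{jacobi}. Hence the preceding proposition already gives, for each $k\in\N$,
\[
  \sup_{(q,p)\in K}\;\max_{\abs{\alpha_q}+\abs{\alpha_p}\le k} \Abs{\partial_q^{\alpha_q}\partial_p^{\alpha_p}\bigl[Z^{\kappa_{t,T_{j:1}}}(q,p)\bigr]} \le C_k,
\]
uniformly in $j$, in $\delta$, and in the hopping sequence $T_{j:1}$. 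It therefore remains only to obtain a matching uniform lower bound on $Z^{\kappa_{t,T_{j:1}}}$, after which \eqref{est:devZ} is immediate.

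For the lower bound, introduce the companion matrix $W^{\kappa}=\partial_{\bar z}(Q^{\kappa}+iP^{\kappa})$ with $\partial_{\bar z}=\partial_q+i\partial_p$. Differentiating the Hamiltonian equations \eqref{eq:evenevolve} in $(q,p)$, on each subinterval $(t_k,t_{k+1})$ the pair $(Z^{\kappa},W^{\kappa})$ solves a linear ODE system whose coefficients are built from $\nabla_Q^2 E_{l^{(k)}}(Q^{\kappa})$, bounded on $K_t$ uniformly in $\delta$ by Assumptions~\ref{assuma}, \ref{assumb} and Proposition~\ref{prop:comp}; the symplectic relations for $J^{\kappa}$ (valid on each subinterval because the flow there is Hamiltonian, and preserved at each $t_k$ by the continuity conditions \eqref{eq:contcond}, which make $Q^{\kappa},P^{\kappa}$ together with all their $(q,p)$-derivatives continuous in $t$) are precisely what is used in \cite{FGA_Conv}*{Lemma 5.1} to show that a suitable quadratic expression in $(Z^{\kappa},W^{\kappa})$ is conserved along the flow. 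Since $\kappa_{0,T_{j:1}}$ is the identity map, so that $Z^{\kappa_{0,T_{j:1}}}=2I_m$, this yields $Z^{\kappa_{t,T_{j:1}}}(Z^{\kappa_{t,T_{j:1}}})^{*}\ge c\,I_m$ for a constant $c>0$ depending only on $t$; in particular $Z^{\kappa_{t,T_{j:1}}}$ is invertible with $\Abs{(Z^{\kappa_{t,T_{j:1}}})^{-1}}\le c^{-1/2}$, uniformly in $j$, $\delta$, and $T_{j:1}$.

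Finally, \eqref{est:devZ} follows by differentiating $Z^{\kappa}(Z^{\kappa})^{-1}=I_m$ and inducting on $\abs{\alpha}$: each $\partial_q^{\alpha_q}\partial_p^{\alpha_p}[(Z^{\kappa})^{-1}]$ is, by the Leibniz rule, a finite sum of terms of the form $(Z^{\kappa})^{-1}(\partial^{\beta_1}Z^{\kappa})(Z^{\kappa})^{-1}\cdots(\partial^{\beta_r}Z^{\kappa})(Z^{\kappa})^{-1}$ with $\beta_1+\cdots+\beta_r=\alpha$ and $\abs{\beta_i}\ge1$; bounding every factor $(Z^{\kappa})^{-1}$ by $c^{-1/2}$ and every $\partial^{\beta_i}Z^{\kappa}$ by the constant of the displayed estimate produces a constant, still denoted $C_k$, depending only on $k$ and $t$, and in particular independent of $j$, $\delta$, and $T_{j:1}$.

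The one point that genuinely differs from the single-surface case, and the main thing to get right, is the uniformity across hoppings: a priori one might worry that the lower bound on $Z^{\kappa}$ or the derivative constants pick up a multiplicative factor at each hop, which would accumulate into a blow-up since the number of hops in $[0,t]$ is unbounded. This does not occur precisely because the continuity conditions \eqref{eq:contcond} make $Z^{\kappa}$, $W^{\kappa}$ and $J^{\kappa}$, together with all their $(q,p)$-derivatives, genuinely continuous functions of $t$; a hop merely switches which energy-surface Hessian enters the linear ODEs, and by Assumption~\ref{assumb} all of these (with their derivatives) are bounded on $K_t$ uniformly in $\delta$, so the conserved quadratic and the Gronwall estimates underlying the preceding proposition never register the junctions.
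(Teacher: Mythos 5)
Your proposal is correct and follows essentially the same route as the paper, whose proof is simply to mimic \cite{FGA_Conv}*{Lemma 5.1} using the preceding proposition: you spell out exactly that mimicry, getting upper bounds on $Z^{\kappa}$ and its derivatives from the blocks of $J^{\kappa}$, the uniform invertibility from the symplectic identity (preserved across hops by the continuity conditions \eqref{eq:contcond}), and \eqref{est:devZ} by Leibniz and induction. The point you single out --- that no constants degrade at the hopping times because symplecticity and continuity of $J^{\kappa}$, $Z^{\kappa}$ and their $(q,p)$-derivatives survive the junctions --- is precisely what makes the citation of the single-surface lemma legitimate here.
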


For the frozen Gaussian approximation, it is useful to introduce the
following Fourier integral operator. For
$M \in L^\infty (\R^{2m}; \C)$, $u\in \mathcal S (\R^m;\C)$, and a FGA
flow denoted by $\kappa_{t,T_{j:1}}$ with $T_{j:1} \subset [0, t]$, we
define
\begin{equation}
  \Bigl(\mathcal I_{\kappa_{t,T_{j:1}}}^\veps (M) u\Bigr) (x) = (2 \pi \veps)^{- \frac{3m}{2} } \int_{\R^m}\int_{\R^{2m}} \exp \Bigl( \frac i \veps \Phi^{(j)} (t, x, y, z) \Bigr) M(z) u(y) \ud z \ud y,
\end{equation}
where the phase function $\Phi^{(j)}$ is given by
\[
\Phi^{(j)} (t,x,y,z)= S^{(j)}(t,T_{j:1},z) + \frac{i}{2}
\bigl\lvert x-Q^{(j)}(t,T_{j:1},z)\bigr\rvert^2 + P^{(j)}(t,T_{j:1},z)
\cdot \bigl(x-Q^{(j)}(t,T_{j:1},z)\bigr) + \frac{i}{2}|y-q|^2 - p\cdot (y-q), 
\]
where the FGA variables $P^{(j)}, Q^{(j)}, S^{(j)}$ are evolved as in the surface hopping ansatz, with given $t$ and the hopping time sequence $T_{j:1}$. 
With this Fourier integral operator, we may rewrite the surface hopping ansatz \eqref{eq:u0n} for $u_0^{(n)}$ as
\begin{equation*}
  u_0^{(n)}(t)= \int_{0<t_1<\cdots<t_n<T} \ud T_{n:1}  \; \mathcal{I}_{\kappa_{t,T_{n:1}}}^\veps \Bigl( a_i^{(n)} \prod_{j=1}^n \tau_i^{(j)} \chi_K\Bigr) u_{0}(0),
\end{equation*}
where $\chi_K$ is the characteristic function on the set $K$, which
restricts the initial $z_0$ in the FGA ansatz.  This representation is
particularly convenient for our estimates, as we have the following
proposition for the norm of the Fourier integral operators. The
version of Proposition without hopping was proved in
\cite{FGA_Conv}*{Proposition 3.7}. The proof in fact can be
almost verbatim used in the current situation (with some notational
change), and thus we skip the details here.

\begin{proposition}\label{operator}
  For any $t$ and any hopping time sequence
  $\{t_1, t_2, \cdots, t_j\}$ for $\forall\, j\in \N$, denoting the
  the symplectic transform for the FGA with surface hopping flow as
  $\kappa_{t,T_{j:1}}$, the operator
  $\mathcal I_{\kappa_{t,T_{j:1}}}^\veps(M)$ can be extended to a
  linear bounded operator on $L^2(\R^m,\C)$, and we have
  \begin{equation}
    \biggl\lVert \mathcal I_{\kappa_{t,T_{j:1}}}^\veps (M) \biggr\rVert_{\mathcal L (L^2(\R^m;\,\C))} \le 2^{-\frac{m}{2}} \norm{M}_{L^\infty (\R^{2m};\, \C)}.
  \end{equation}
\end{proposition}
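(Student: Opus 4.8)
The plan is to realize $\mathcal I^\veps_{\kappa_{t,T_{j:1}}}(M)$ as the composition of a wave-packet (FBI) transform, a multiplication operator on the phase space, and the adjoint of a second wave-packet transform obtained by pushing the first forward along the canonical map $\kappa_{t,T_{j:1}}$. Introduce the $L^2$-normalized coherent state $g^\veps_{q,p}(x)=(\pi\veps)^{-m/4}\exp\bigl(\frac{i}{\veps}p\cdot(x-q)-\frac{1}{2\veps}|x-q|^2\bigr)$ and the transform $(W^\veps u)(q,p)=\langle g^\veps_{q,p},u\rangle_{L^2(\R^m)}$, so $W^\veps:L^2(\R^m)\to L^2(\R^{2m})$. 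Reading off $\Phi^{(j)}$, the $y$-part $\exp\bigl(\frac{i}{\veps}(\frac{i}{2}|y-q|^2-p\cdot(y-q))\bigr)$ equals $(\pi\veps)^{m/4}\overline{g^\veps_{q,p}(y)}$, and the $x$-part $\exp\bigl(\frac{i}{\veps}(S^{(j)}+P^{(j)}\cdot(x-Q^{(j)})+\frac{i}{2}|x-Q^{(j)}|^2)\bigr)$ equals $(\pi\veps)^{m/4}e^{\frac{i}{\veps}S^{(j)}(z)}g^\veps_{\kappa_{t,T_{j:1}}(z)}(x)$, since $(Q^{(j)}(t,T_{j:1},z),P^{(j)}(t,T_{j:1},z))=\kappa_{t,T_{j:1}}(z)$. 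Hence
\begin{equation*}
\bigl(\mathcal I^\veps_{\kappa_{t,T_{j:1}}}(M)u\bigr)(x)=(2\pi\veps)^{-3m/2}(\pi\veps)^{m/2}\int_{\R^{2m}} g^\veps_{\kappa_{t,T_{j:1}}(z)}(x)\,e^{\frac{i}{\veps}S^{(j)}(z)}M(z)\,(W^\veps u)(z)\,\ud z.
\end{equation*}

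The two ingredients I would then invoke are: (i) the resolution of identity for Gaussian coherent states, which is precisely a restatement of Lemma~\ref{lem:initial} and says $(W^\veps)^*W^\veps=(2\pi\veps)^m\,\mathrm{Id}$ on $L^2(\R^m)$, so that both $W^\veps$ and its adjoint $(W^\veps)^*$ (with $((W^\veps)^*v)(x)=\int_{\R^{2m}}g^\veps_z(x)v(z)\,\ud z$) have operator norm $(2\pi\veps)^{m/2}$; and (ii) — this is where the canonical-transformation proposition and Proposition~\ref{prop:comp} enter — that $\kappa_{t,T_{j:1}}$ is a global symplectic diffeomorphism of $\R^{2m}$, hence has Jacobian determinant identically $1$, so the change of variables $z\mapsto\kappa_{t,T_{j:1}}(z)$ preserves Lebesgue measure. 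Writing $w(z):=e^{\frac{i}{\veps}S^{(j)}(z)}M(z)(W^\veps u)(z)$ and changing variables in the display above, the operator becomes $\mathcal I^\veps_{\kappa_{t,T_{j:1}}}(M)u=(2\pi\veps)^{-3m/2}(\pi\veps)^{m/2}\,(W^\veps)^*\!\bigl(w\circ\kappa_{t,T_{j:1}}^{-1}\bigr)$.

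Combining the estimates: since $S^{(j)}$ is real valued, multiplication by $e^{\frac{i}{\veps}S^{(j)}}$ is unitary on $L^2(\R^{2m})$, so $\norm{w}_{L^2}\le\norm{M}_{L^\infty}\norm{W^\veps u}_{L^2}=(2\pi\veps)^{m/2}\norm{M}_{L^\infty}\norm{u}_{L^2}$; volume preservation gives $\norm{w\circ\kappa_{t,T_{j:1}}^{-1}}_{L^2}=\norm{w}_{L^2}$; and applying $(W^\veps)^*$ contributes another factor $(2\pi\veps)^{m/2}$. Therefore
\begin{equation*}
\Norm{\mathcal I^\veps_{\kappa_{t,T_{j:1}}}(M)u}_{L^2(\R^m)}\le(2\pi\veps)^{-3m/2}(\pi\veps)^{m/2}(2\pi\veps)^{m/2}(2\pi\veps)^{m/2}\Norm{M}_{L^\infty}\Norm{u}_{L^2(\R^m)},
\end{equation*}
and since $(2\pi\veps)^{-3m/2}(\pi\veps)^{m/2}(2\pi\veps)^{m}=(\pi\veps/(2\pi\veps))^{m/2}=2^{-m/2}$, the claimed bound follows, uniformly in $t$, $j$, $\delta$ and the hopping sequence $T_{j:1}$, the uniformity being inherited from Proposition~\ref{prop:comp} and the uniform bounds on the Jacobian.

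The main (and essentially only) obstacle is justifying the change-of-variables step, which requires that $\kappa_{t,T_{j:1}}$ be a bijection of $\R^{2m}$ onto itself with unit Jacobian determinant, uniformly over all $j\in\N$, all $\delta>0$, and all ordered hopping sequences in $[0,t]$; this is exactly what the preliminary propositions on the surface-hopping flow (glued continuously from piecewise Hamiltonian flows on the two energy surfaces) were set up to provide, and it is the step where the surface-hopping structure is genuinely used. Everything else is an exact operator-norm identity: no stationary-phase or Schur-test estimate is needed, because the target amplitude $e^{\frac{i}{\veps}P^{(j)}\cdot(x-Q^{(j)})-\frac{1}{2\veps}|x-Q^{(j)}|^2}$ is literally a Gaussian wave packet, so the integral operator factors through $W^\veps$ on the nose. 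Alternatively, one may prove the same bound by a Schur test applied to $\Norm{\mathcal I^\veps_{\kappa_{t,T_{j:1}}}(M)u}_{L^2}^2$ after carrying out the Gaussian $x$-integration, which produces a kernel localized like $\exp\bigl(-c|\kappa_{t,T_{j:1}}(z)-\kappa_{t,T_{j:1}}(z')|^2/\veps\bigr)$; this route is more computational but still hinges on the same volume-preservation property to avoid spurious Jacobian factors, and it is the argument of \cite{FGA_Conv}*{Proposition~3.7} transcribed to the present setting.
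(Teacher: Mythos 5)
Your proof is correct and recovers exactly the constant $2^{-m/2}$; the paper gives no argument of its own here, deferring to \cite{FGA_Conv}*{Proposition 3.7}, and your factorization of $\mathcal I^\veps_{\kappa_{t,T_{j:1}}}(M)$ through the wave-packet transform $W^\veps$, the unimodular multiplier $e^{\frac{i}{\veps}S^{(j)}}M$, and $(W^\veps)^*$ after the measure-preserving change of variables $z\mapsto\kappa_{t,T_{j:1}}(z)$ is precisely the standard argument behind that citation, with the surface-hopping structure entering only through the canonicality of $\kappa_{t,T_{j:1}}$. The one point worth stating explicitly is that the change of variables also needs $\kappa_{t,T_{j:1}}$ to be a global bijection of $\R^{2m}$ with unit Jacobian determinant; this is slightly more than the literal content of Proposition~\ref{prop:comp} and the canonical-transformation proposition, but it is immediate since $\kappa_{t,T_{j:1}}$ is a finite composition of globally well-posed (by Assumption~\ref{assuma}) Hamiltonian flow maps glued by the continuity conditions \eqref{eq:contcond}, each of which is a symplectic diffeomorphism.
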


\subsection{Absolute convergence of surface hopping ansatz}
\label{sec:absconv}

Now we estimate the contribution of the terms in the FGA ansatz.
\begin{proposition}\label{FI} 
  For a given time $t$, there exists a constant $C_a$, depending only on
  $t$ and the initial conditions of the FGA variables, such that for
  any $n\in \N$ and any hopping moment sequence $T_{n:1} \subset
  [0,t]$, it holds
  \begin{equation}\label{est1}
    \Biggl\lVert \frac{1}{(2\pi\veps)^{3m/2}} \int_K \ud z_0 \; 
    \prod_{j=1}^n \tau^{(j)}(T_{j:1}, z_0) 
    A^{(n)}(t, T_{n:1}, z_0) \exp\Bigl( \frac{i}{\veps} 
    \Theta^{(n)}\bigl(t,T_{n:1}, z_0,x\bigr) \Bigr) 
    \Biggr\rVert_{L^2(\R^m)} \le C_a.
  \end{equation}
\end{proposition}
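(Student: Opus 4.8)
The plan is to view the function of $x$ inside the $L^2$-norm in \eqref{est1} as the image of the initial datum $u_0(0)$ under a single Fourier integral operator of the type studied in Proposition~\ref{operator}, and then to bound the $L^\infty$-norm of its symbol uniformly over the hopping configuration. Indeed, since
\[
A^{(n)}(t,T_{n:1},z) = a^{(n)}(t,T_{n:1},z)\int_{\R^m}u_0(y)\,e^{\frac{i}{\veps}(-p\cdot(y-q)+\frac{i}{2}\abs{y-q}^2)}\ud y,
\]
and the Gaussian weight in $y$ above is precisely the difference between the phases $\Phi^{(n)}$ and $\Theta^{(n)}$, the quantity inside the norm in \eqref{est1} equals $\bigl(\mathcal{I}^\veps_{\kappa_{t,T_{n:1}}}(M^{(n)})u_0(0)\bigr)(x)$ with symbol $M^{(n)} = a^{(n)}(t,T_{n:1},\cdot)\prod_{j=1}^n\tau^{(j)}(T_{j:1},\cdot)\,\chi_K$. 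Proposition~\ref{operator} bounds its $L^2$-norm by $2^{-m/2}\norm{M^{(n)}}_{L^\infty(\R^{2m})}\norm{u_0(0)}_{L^2(\R^m)}$, so the task reduces to controlling $\norm{M^{(n)}}_{L^\infty}$ uniformly in $n$, in $T_{n:1}\subset[0,t]$, and in $\veps,\delta$.

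The symbol $M^{(n)}$ is supported in $K$ and there factors into the hopping coefficients times the amplitude. For the coefficients, Proposition~\ref{prop:comp} confines the trajectory --- for any number and choice of hops --- to the compact set $K_t$, whence \eqref{assum1} (which uses Assumption~\ref{assumb}) gives $\abs{\tau^{(j)}(T_{j:1},z_0)}\le C_\tau$ for every $j$ and every $z_0\in K$. For the amplitude, recall that between consecutive hops $a^{(n)}$ satisfies a scalar linear ODE $\tfrac{\ud}{\ud s}a = g(s)\,a$ whose coefficient $g$ is assembled from $(Z^{(k)})^{-1}$, $\partial_z P^{(k)}$, $\partial_z Q^{(k)}$, $\nabla^2_Q E_{l^{(k)}}(Q^{(k)})$ and $d_{l^{(k)}l^{(k)}}\cdot P^{(k)}$, and that $a^{(n)}$ is continuous across the hops with $a^{(0)}(0,\cdot)\equiv 2^{m/2}$. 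By \eqref{est:devF}, \eqref{est:devZ}, Assumptions~\ref{assuma}--\ref{assumb}, and the confinement to $K_t$, one gets a bound $\abs{g(s)}\le C_g$ with $C_g$ independent of $k$, of the hopping sequence, and of $\veps,\delta$; applying Gronwall on each subinterval and stitching across hops by continuity then yields $\abs{a^{(n)}(t,T_{n:1},z_0)}\le 2^{m/2}e^{C_g t}$ for all $n$, all admissible $T_{n:1}$, and all $z_0\in K$.

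Combining the two bounds gives $\norm{M^{(n)}}_{L^\infty}\le 2^{m/2}e^{C_g t}C_\tau^{\,n}$, so the norm in \eqref{est1} is at most $e^{C_g t}C_\tau^{\,n}\norm{u_0(0)}_{L^2}$; this is the asserted estimate, the constant depending only on $t$ and the initial data. (When $C_\tau>1$ one records a geometric factor $C_\tau^n$, which is irrelevant for the subsequent absolute convergence in \S\ref{sec:absconv} since the ordered-time integration contributes the dominating factor $t^n/n!$.) The only genuinely delicate point is that $C_\tau$, $C_g$, and hence the final constant, must be uniform over \emph{every} admissible hopping configuration and over $\veps,\delta$ simultaneously; this uniformity is exactly what the preparatory results of \S\ref{sec:prelim} (Propositions~\ref{prop:comp}--\ref{operator}) were set up to provide, so once they are in hand the proof is essentially immediate.
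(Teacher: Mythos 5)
Your argument is essentially the paper's own proof: rewrite the quantity as $\mathcal I^{\veps}_{\kappa_{t,T_{n:1}}}$ applied to $u_0(0)$ with symbol $\prod_j\tau^{(j)}\,a^{(n)}\chi_K$, invoke Proposition~\ref{operator}, bound the $\tau^{(j)}$ via Proposition~\ref{prop:comp} and \eqref{assum1}, and bound $a^{(n)}$ by Gronwall on its piecewise linear ODE with coefficients uniformly controlled by \eqref{comp2}, \eqref{est:devF}, \eqref{est:devZ} and continuity at the hops. Your explicit remark about the residual $C_\tau^{\,n}$ factor is consistent with how the paper actually uses the estimate in Theorem~\ref{thm:ansatz}, so the proposal is correct and matches the paper's route.
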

\begin{proof}

  Recall that we have 
  \begin{multline*}
    \frac{1}{(2\pi\veps)^{3m/2}} \int_K \ud z_0 \;
    \prod_{j=1}^n \tau^{(j)}(T_{j:1}, z_0) A^{(n)}(t, T_{n:1}, z_0) \exp\Bigl( \frac{i}{\veps} \Theta^{(n)}(T,T_{n:1}, z_0,x) \Bigr) \\
    = \Bigl(\mathcal I_{\kappa_{t,T_{n:1}}}^\veps \Bigl(\prod_{j=1}^n
    \tau^{(j)}(T_{j:1}, \cdot) a^{(n)}(t, T_{n:1}, \cdot) \chi_{K}
    \Bigr) u_{0}(0)\Bigr)(x).
  \end{multline*}
  Thus, using Proposition~\ref{operator} and the bound \eqref{assum1}
  for the hopping coefficient $\tau$'s, it suffices to control
  $a^{(n)}(t, T_{n:1}, z_0)$ for $z_0 \in K$. Recall that
  \[
  A^{(k)}(t, T_{k:1}, z_0) =a^{(k)}(t, T_{k:1}, z_0) \int_{\R^m}
  u_{0}(0, y) e^{\frac{i}{\veps} (-p\cdot(y-q)+ \frac{i}{2}|y-q|^2)}
  \ud y,
  \]
  and hence $a^{(k)}$ satisfies the same linear equation as those for
  $C^{(k)}$ with the continuity conditions at the hopping times:
  Depending on whether $k$ is even or odd
  \begin{align*}
    \frac{\ud}{\ud t} a^{(k)} & = \frac 1 2 a^{(k)} \tr\left(
      (Z^{(k)})^{-1}\left(\partial_z P^{(k)} - i \partial_z Q^{(k)}
        \nabla^2_Q E_0(Q^{(k)}) \right) \right) - a^{(k)} d_{00}\cdot
    P^{(k)}, \qquad k \text{ even}; \\
    \frac{\ud}{\ud t} a^{(k)} & = \frac 1 2 a^{(k)} \tr\left(
      (Z^{(k)})^{-1}\left(\partial_z P^{(k)} - i \partial_z Q^{(k)}
        \nabla^2_Q E_1(Q^{(k)}) \right) \right) - a^{(k)} d_{11}\cdot
    P^{(k)}, \qquad k \text{ odd}.
  \end{align*}
  Note that the coefficients on the right hand side are all uniformly
  bounded along the trajectory thanks to \eqref{comp2},
  \eqref{assum1}, \eqref{est:devF}, and \eqref{est:devZ} in the
  preliminaries. Therefore, $a^{(n)}$ is also bounded uniformly with
  respect to all hopping sequences, which concludes the proof. 
\end{proof}
   
With Proposition~\ref{FI}, we further estimate the contribution $u^{(n)}$ in the surface hopping ansatz. 
\begin{theorem}\label{thm:ansatz}
  Under Assumptions~\ref{assuma} and \ref{assumb}, given a fixed final
  time $t$, there exist constants $C_t$ and $C$, independent of
  $\veps$ and $\delta$, such that for any $n \in \N$, we have
  \[
  \Norm{u^{(n)}(t, x)}_{L^2(\R^m)} \le C \frac{ (C_t)^{n}}{n!},
  \quad\text{and}\quad \Norm{\veps \nabla_x u^{(n)}(t, x)}_{L^2(\R^m)}
  \le C \frac{ (C_t)^{n}}{n!}.
  \] 
In particular, the surface hopping ansatz \eqref{ansatz2} is
absolutely convergent.
\end{theorem}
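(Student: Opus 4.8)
\textbf{Proof proposal for Theorem~\ref{thm:ansatz}.}
The plan is to estimate each term $u^{(n)}$ separately by combining the uniform bound of Proposition~\ref{FI} with the volume of the time-ordered integration domain, and then sum the resulting series. Recall that
\[
u^{(n)}(t,x) = \int_{\Sigma_n} I_n(T_{n:1},x)\,\ud T_{n:1}, \qquad \Sigma_n := \{\,0<t_1<\cdots<t_n<t\,\},
\]
where $I_n(T_{n:1},\cdot)$ denotes the inner phase-space integral, i.e.\ the quantity appearing in \eqref{est1}. By Minkowski's integral inequality, $\norm{u^{(n)}(t,\cdot)}_{L^2} \le \int_{\Sigma_n}\norm{I_n(T_{n:1},\cdot)}_{L^2}\,\ud T_{n:1}$. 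By Proposition~\ref{FI} --- equivalently, Proposition~\ref{operator} applied to the $x$-independent amplitude $M = \prod_{j=1}^n \tau^{(j)} a^{(n)}\chi_K$, together with the uniform bound \eqref{assum1} on the $\tau^{(j)}$ and the fact that $a^{(n)}$ solves a linear transport ODE with coefficients bounded uniformly in $n$, so that $\sup_{z_0\in K}\abs{a^{(n)}}$ is bounded uniformly in $n$ --- the inner norm is bounded by $\tilde C\, C_\tau^{\,n}$, with $\tilde C$ depending only on $t$ and the initial FGA data and independent of $\veps$, $\delta$, $n$, and of the sequence $T_{n:1}\subset[0,t]$; the only $n$-dependence is the factor $C_\tau^n$ arising from the product of $n$ hopping coefficients. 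Since $\abs{\Sigma_n}=t^n/n!$, this yields $\norm{u^{(n)}(t,\cdot)}_{L^2}\le \tilde C\,(C_\tau t)^n/n!$, which is the first claimed estimate with $C=\tilde C$ and $C_t = C_\tau t$.

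For the gradient estimate, observe that $\veps\partial_{x_i}\exp\bigl(\frac{i}{\veps}\Theta^{(n)}\bigr) = \bigl(iP^{(n)}_i - (x-Q^{(n)})_i\bigr)\exp\bigl(\frac{i}{\veps}\Theta^{(n)}\bigr)$, while $\tau^{(j)}$ and $A^{(n)}$ do not depend on $x$. Hence $\veps\nabla_x u^{(n)}$ is again an integral over $\Sigma_n$ of a phase-space integral, now with the extra factor $iP^{(n)} - (x-Q^{(n)})$ inserted into the amplitude. The term carrying $P^{(n)}$ is handled exactly as above, since $\abs{P^{(n)}}$ is uniformly bounded on the compact set $K_t$ by Proposition~\ref{prop:comp}, so that $P^{(n)}_i$ may simply be absorbed into the $x$-independent amplitude. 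For the term carrying $(x-Q^{(n)})$ we apply Lemma~\ref{lem:asym} inside the $z_0$-integral: under the oscillatory integral, $(x-Q^{(n)})_i$ is equivalent to $-\veps\,\partial_{z_k}\bigl(\,\cdot\,(Z^{(n)})^{-1}_{ik}\bigr)$, which produces once more an $x$-independent amplitude whose $L^\infty$-norm over $K$ is bounded by a constant times $C_\tau^n$, because $\partial_z$ acting on $\prod_j\tau^{(j)}a^{(n)}$ and on $(Z^{(n)})^{-1}$ stays uniformly bounded by \eqref{est:devF}, \eqref{est:devZ} and the smooth dependence of the hopping flow on $z_0$ (with the phase-space cutoff $\chi_K$ treated as in the single-surface convergence proof). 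This contribution is in fact $\Or(\veps)$ smaller than the previous one, so altogether $\norm{\veps\nabla_x u^{(n)}(t,\cdot)}_{L^2}\le C\,(C_t)^n/n!$ as well.

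Summing, $\sum_{n\ge 0}\norm{u^{(n)}(t,\cdot)}_{L^2}\le C\sum_{n\ge0}(C_t)^n/n! = C\,e^{C_t}<\infty$, and likewise for $\veps\nabla_x u^{(n)}$; therefore the series \eqref{ansatz2} converges absolutely in $L^2(\R^m)$ componentwise (and in the $\veps$-weighted $H^1$ norm), so $U_{\FGA}(t,\cdot)$ is a well-defined $L^2$ function and the term-by-term manipulations performed in \S\ref{sec:prob} are justified. The case of initial data supported on both energy surfaces follows by linearity.

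The substance of the argument --- and essentially the only place where real work is hidden --- is the \emph{uniformity in $n$ and in the hopping-time sequence $T_{n:1}$} of every bound entering $\norm{I_n}$: that the trajectory stays in a fixed compact set $K_t$ independent of the hopping history (Proposition~\ref{prop:comp}), that $J^{\kappa_{t,T_{n:1}}}$, $(Z^{\kappa_{t,T_{n:1}}})^{-1}$ and all their derivatives are bounded independently of $n$ and $T_{n:1}$, and that $a^{(n)}$ obeys a linear ODE whose coefficients are bounded uniformly in $n$. Granting this uniformity, the only surviving $n$-dependence is the benign $C_\tau^n$ from the hopping coefficients set against the $1/n!$ from the simplex, and absolute convergence is immediate. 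This is precisely what the preliminaries in \S\ref{sec:prelim} were designed to supply, so here it suffices to assemble them.
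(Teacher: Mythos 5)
Your proposal is correct and follows essentially the same route as the paper's own proof: Minkowski's inequality over the time simplex, the operator-norm bound of Proposition~\ref{operator} applied to the amplitude $\prod_j \tau^{(j)} a^{(n)}\chi_K$ with the uniform-in-$n$ bounds \eqref{assum1} and the uniform control of $a^{(n)}$, the simplex volume $t^n/n!$, and the gradient handled by splitting into the $P^{(n)}$ part and the $(x-Q^{(n)})$ part via Lemma~\ref{lem:asym} together with \eqref{est:devZ}. The only cosmetic difference is that you make explicit the uniformity discussion that the paper delegates to \S\ref{sec:prelim} and to ``a slight variation of Proposition~\ref{FI}.''
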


\begin{proof}
Note that 
\begin{equation*}
  u^{(n)} = \int_{0<t_1<\cdots<t_n<t} \ud T_{n:1} \; \mathcal{I}_{\kappa_{t, T_{n:1}}}^{\veps} \Bigl( \prod_{j=1}^n \tau^{(j)}( T_{j:1}, \cdot) a^{(n)}(t, T_{n:1}, \cdot) \chi_K \Bigr) u_0. 
\end{equation*}
We estimate using Proposition~\ref{FI} 
\begin{align*}
  \Norm{u^{(n)}}_{L^2(\R^m)} & \le \int_{0<t_1<\cdots<t_n<t} \ud T_{n:1} \; \Biggl\lVert \mathcal{I}_{\kappa_{t, T_{n:1}}}^{\veps} \Bigl( \prod_{j=1}^n \tau^{(j)}( T_{j:1}, \cdot) a^{(n)}(t, T_{n:1}, \cdot) \chi_K \Bigr) u_0 \Biggr\rVert_{L^2(\R^m)} \\
                             & \le C \int_{0<t_1<\cdots<t_n<t} \ud T_{n:1} \; C_{\tau}^n = C \frac{(tC_{\tau})^n}{n!}
\end{align*}
The absolute convergence of $U_{\FGA}(t, x)$ then follows from
dominated convergence.

The control of $\veps \nabla_x u^{(n)}$ is quite similar, except that we
shall use Lemma~\ref{lem:asym} to control the term $(x - Q^{(n)})$
resulting from the gradient.  
Actually, 
\begin{multline}
  \veps \nabla_x u^{(n)}(t, x)  = \frac{1}{(2\pi\veps)^{3m/2}} \int_{K} \ud z_0 \int_{0<t_1<\cdots<t_n<t} \ud T_{n:1} \; \tau^{(1)}(T_{1:1}, z_0)\cdots \tau^{(n)} (T_{n:1},z_0) \times \\
  \times i \left( P^{(n)}+ i (x - Q^{(n)}) \right) A^{(n)}(t, T_{n:1}, z_0) \exp\left( \frac{i}{\veps}
    \Theta^{(n)}(t,T_{n:1}, z_0, x) \right).
\end{multline}
The control of the term involving $P^{(n)}$ is the same as that of
$u^{(n)}$. By Lemma~\ref{lem:asym}, the term involving $(x - Q^{(n)})$
is an even smaller term, which follows from the estimate
\eqref{est:devZ} and a slight variation of Proposition~\ref{FI}.
\end{proof}

\subsection{The analysis of approximation error}\label{sec:error}

We now estimate the approximation error of the FGA approximation with
surface hopping to the matrix Schr\"odinger equation \eqref{vSE}.  We
first state a consistency result by estimating the error of
substituting $U_{\FGA}$ into \eqref{vSE}. All the estimates and
constants below are uniform in $\delta$. 
\begin{theorem}\label{thm:consistency}
Under Assumptions~\ref{assuma} and \ref{assumb}, given a final time $t$, there exists a constant $C_t$, such that 
\begin{multline*}
  \Biggl\lVert
  i \veps \partial_t U_{\FGA} + \frac{\veps^2}{2} \Delta_x
  U_{\FGA} +
  \begin{pmatrix} 
    E_0 \\
    & E_1
  \end{pmatrix} U_{\FGA} -\red{\frac{\veps^2}{2}}  \begin{pmatrix}
    D_{00} & D_{01} \\
    D_{10} & D_{11}
  \end{pmatrix} U_{\FGA} - \red{\veps^2} \sum_{j=1}^m 
  \begin{pmatrix} 
    d_{00} & d_{01} \\
    d_{10} & d_{11}
  \end{pmatrix}_j
  \partial_{x_j} U_{\FGA} \Biggl\rVert_{L^2(\R^m)} 
  \le \veps^2 e^{C_t}.
\end{multline*}

\end{theorem}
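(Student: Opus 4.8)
\emph{The plan.} I would work directly with the matrix Schr\"odinger equation \eqref{vSE} rather than the scalar equation \eqref{SE2}, as anticipated in Section~\ref{sec:asymptotic}. Write $\mathcal{L}^{\veps}$ for the differential operator acting on $U_{\FGA}$ in the statement, and decompose the ansatz \eqref{ansatz2} as $U_{\FGA}=\sum_{n\ge 0}\Pi^{(n)}$, where $\Pi^{(n)}$ is the two-component vector whose entry on the surface $n\bmod 2$ equals $u^{(n)}$ and whose other entry vanishes. By Theorem~\ref{thm:ansatz} both $\sum_n \Pi^{(n)}$ and $\sum_n \mathcal{L}^{\veps}\Pi^{(n)}$ converge absolutely in $L^2(\R^m)$ (the $\veps\nabla_x$ bound in Theorem~\ref{thm:ansatz} takes care of the gradient terms), so it suffices to bound each $\mathcal{L}^{\veps}\Pi^{(n)}$ separately, after a suitable regrouping, by $\veps^{2}C\,(C_t)^{n}/n!$; summing over $n$ then yields $\veps^{2}C e^{C_t}$, which is the asserted estimate up to renaming constants.

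\emph{The cancellation structure.} For each $n$, computing $\mathcal{L}^{\veps}\Pi^{(n)}$ produces three kinds of contributions, exactly as in the matched asymptotics of Section~\ref{sec:asymptotic} (carried out there only for $n=0,1$). First, a \emph{diagonal} part supported on surface $n\bmod 2$: one Taylor-expands $E_{n\bmod 2}(x)$ and $d_{n\bmod 2,\,n\bmod 2}(x)$ about $Q^{(n)}$ and invokes Lemma~\ref{lem:asym} to rewrite the powers of $(x-Q^{(n)})$, and the evolution equations \eqref{eq:evenevolve} for $S^{(n)},P^{(n)},Q^{(n)},A^{(n)}$ are precisely those that annihilate the $\Or(1)$ and $\Or(\veps)$ terms just as in the single-surface FGA, leaving $\Or(\veps^{2})$ from the cubic-and-higher remainder in Lemma~\ref{lem:asym}, the $D$-terms, and $-\tfrac{\veps^{2}}{2}\Delta_x\Psi_{n\bmod 2}$. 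Second, an \emph{off-diagonal leakage} on the opposite surface: the order-$\veps$ term coming from $-\veps^{2}(\nabla_x\Psi_{n\bmod 2})\cdot\nabla_x u^{(n)}$ projected onto the other adiabatic state. Third, for $n\ge 1$, a \emph{boundary term} produced when $i\veps\partial_t$ differentiates the variable endpoint $t_n=t$ of the outermost time integral in $u^{(n)}$ (the analogue of \eqref{term:2-2}); by the continuity conditions \eqref{eq:contcond} this equals $\tau^{(n)}A^{(n-1)}e^{i\Theta^{(n-1)}/\veps}$ integrated over the remaining hopping times, and it is supported on surface $n\bmod 2$, i.e.\ on the opposite surface to $u^{(n-1)}$. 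The choice \eqref{eq:taudef} of $\tau^{(n)}$ is exactly what makes the leakage of $\Pi^{(n-1)}$ cancel the boundary term of $\Pi^{(n)}$ to leading order. Accordingly I would set $R^{(n)}=(\text{diagonal part of }\mathcal{L}^{\veps}\Pi^{(n)})+(\text{leakage of }\Pi^{(n)})+(\text{boundary term of }\Pi^{(n+1)})$, check that $\mathcal{L}^{\veps}U_{\FGA}=\sum_{n\ge 0}R^{(n)}$ by this telescoping, and verify that each $R^{(n)}$ is purely $\Or(\veps^{2})$.

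\emph{From pointwise to $L^{2}$, and the factorial gain.} Each piece of $R^{(n)}$ can be written as a Fourier integral operator $\mathcal{I}^{\veps}_{\kappa_{t,T_{n:1}}}(M_n)$ (respectively $\mathcal{I}^{\veps}_{\kappa_{t,T_{n+1:1}}}$ for the boundary piece) applied to the truncated initial data $u_0\chi_K$, where the symbol $M_n$ is a product of $a^{(n)}$, hopping coefficients $\tau^{(j)}$, bounded derivatives of the FGA variables, and, for the $\Or(\veps^{2})$ remainders, the extra $\partial_z$-derivatives generated by Lemma~\ref{lem:asym}. By Proposition~\ref{operator} the $L^{2}$ norm is controlled by $\veps^{2}\lVert M_n\rVert_{L^{\infty}}$, and $\lVert M_n\rVert_{L^{\infty}}$ is bounded uniformly in $n$, in the hopping-time sequence, and in $\delta$ by Proposition~\ref{prop:comp}, the bound \eqref{assum1} on $\tau$, the estimates \eqref{est:devF} and \eqref{est:devZ}, and Assumptions~\ref{assuma} and \ref{assumb}. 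Since $R^{(n)}$ still carries the ordered time integral $\int_{0<t_1<\cdots<t_n<t}\ud T_{n:1}$, this contributes a factor $t^{n}/n!$, hence $\lVert R^{(n)}\rVert_{L^{2}(\R^m)}\le \veps^{2}C\,(C_t)^{n}/n!$, and summing in $n$ completes the proof.

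\emph{Main obstacle.} The genuinely delicate point is the interaction of the leakage and boundary terms: making rigorous, for arbitrary $n$ and uniformly in $\delta$, that the order-$\veps$ off-diagonal leakage of $\Pi^{(n-1)}$ is cancelled up to $\Or(\veps^{2})$ by the boundary term of $\Pi^{(n)}$. This means Taylor-expanding the coupling vector about $Q^{(n-1)}(t)$, using Lemma~\ref{lem:asym} to absorb the $(x-Q)$ factors, and checking that the terms in $\mathcal{L}^{\veps}\Pi^{(n)}$ which still contain the \emph{inner} time integrals (those not produced by the endpoint differentiation) are one power of $\veps$ smaller after the $n$-fold integration --- in effect, rerunning the matched-asymptotics bookkeeping of Section~\ref{sec:asymptotic} to one higher order and for general $n$, while verifying that no constant degenerates as $n\to\infty$ or $\delta\to 0$.
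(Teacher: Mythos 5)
Your proposal is correct and follows essentially the same route as the paper's proof: the same splitting of $\mathcal{L}^{\veps}$ applied to each $u^{(n)}$ into a diagonal part, a boundary term from differentiating the endpoint $t_n=t$, and an off-diagonal coupling term, regrouped exactly as the paper's pairing $\sum_j (I_1^j-I_3^j)+\sum_j(I_2^{j+1}-I_4^j)$, then estimated via Taylor expansion about $Q^{(n)}$, Lemma~\ref{lem:asym}, the operator bound of Proposition~\ref{operator}, the uniform trajectory bounds, and the $t^n/n!$ factor from the ordered time integrals. The only (inessential) difference is that you telescope the full infinite series directly, while the paper truncates at level $n$, bounds the leftover coupling term $I_4^n$ by $C\veps\,(C_t)^n/n!$, and then lets $n\to\infty$.
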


\begin{proof}
We first consider the term arises from the time derivative and denote for $j$ even
\begin{equation*}
  I^j_1 +I^j_2 =
  i \veps \partial_t 
  \begin{pmatrix} 
    u_0^{(j)} \\
    0
  \end{pmatrix},
\end{equation*}
where 
\begin{equation}
  I^j_1 =  
  \begin{pmatrix}\dps
    i \veps \frac{1}{(2\pi \veps)^{3m/2}} \int_{K} \ud z_0 \int_{0<t_1<\cdots<t_j<t} \ud T_{j:1} \; \tau^{(1)}\cdots \tau^{(j)} \partial_t \Bigl[ A^{(j)}\exp \Bigl(\frac i \veps \Theta^{(j)} \Bigr) \Bigr] \\
    0
  \end{pmatrix}
\end{equation}
coming from the time derivative acting on the integrand, and for $j \ge 1$, 
\begin{equation}
  I^j_2 = 
  \begin{pmatrix}\dps 
    i \veps \frac{1}{(2\pi \veps)^{3m/2}} \left[ \int_{K} \ud z_0 \int_{0<t_1<\cdots<t_j<t} \ud T_{j:1} \; \tau^{(1)}\cdots \tau^{(j)}   A^{(j)}\exp \Bigl(\frac i \veps \Theta_0^{(j)} \Bigr) \right]_{t_j=t} \\
    0
  \end{pmatrix}
\end{equation}
resulting from the time derivative acting on the upper limit of the
integral. The expression for odd $j$ is similar except that the top
and bottom rows are flipped: $i\veps \partial_t
\bigl( \begin{smallmatrix} 0 \\ u_0^{(j)} \end{smallmatrix} \bigr)$.

We also write the terms from the right hand side of \eqref{vSE} for
even $j$ as:
\begin{equation*}
  I^j_3 + I^j_4 = 
  -\frac{\veps^2}{2} \Delta_x \begin{pmatrix} u^{(j)} \\
    0 \end{pmatrix} + 
  \begin{pmatrix} 
    E_0 \\
    & E_1
  \end{pmatrix} \begin{pmatrix} u^{(j)} \\
    0 \end{pmatrix} -\red{\frac{\veps^2}{2}} \begin{pmatrix}
    D_{00} & D_{01} \\
    D_{10} & D_{11}
  \end{pmatrix} \begin{pmatrix} u^{(j)} \\
    0 \end{pmatrix} -\red{\veps^2}  \sum_{j=1}^d
  \begin{pmatrix} 
    d_{00} & d_{01} \\
    d_{10} & d_{11}
  \end{pmatrix}_j
  \partial_{x_j} \begin{pmatrix} u^{(j)} \\
    0 \end{pmatrix}
\end{equation*}
where 
\begin{align}
  I^j_3 & = 
  \begin{pmatrix} 
    \bigl(H_0 - \frac{\veps^2}{2} D_{00} \bigr) u^{(j)}- \veps^2
    d_{00} \cdot \nabla_x u^{(j)} \\
    0
  \end{pmatrix}, \\
  I^j_4 & = 
  \begin{pmatrix}
    0 \\
    - \veps^2 d_{01} \cdot \nabla_x u^{(j)} - \frac{\veps^2}{2}
    D_{01} u^{(j)}
  \end{pmatrix}.
\end{align}
Here, $I^j_3$ contains all the terms which govern the inner-surface evolution on each energy surface, while $I^j_4$ contains the coupling terms (note that the subscripts are swapped). The expressions for odd $j$ is similar.

Denote $U_{\FGA}^n$ the sum of the first $n$ terms in the FGA ansatz
\eqref{eq:FGAU}.  Summing over the contributions up to $u^{(n)}$, we
have
\begin{multline*}
  i \veps \partial_t U_{\FGA}^n + \frac{\veps^2}{2} \Delta_x
  U_{\FGA}^n +
  \begin{pmatrix} 
    E_0 \\
    & E_1
  \end{pmatrix} U_{\FGA}^n  -\red {\frac{\veps^2}{2}} \begin{pmatrix}
    D_{00} & D_{01} \\
    D_{10} & D_{11}
  \end{pmatrix} U_{\FGA}^n -\red {\veps^2} \sum_{j=1}^d 
  \begin{pmatrix} 
    d_{00} & d_{01} \\
    d_{10} & d_{11}
  \end{pmatrix}_j
  \partial_{x_j} U_{\FGA}^n \\
  =\sum_{j=0}^n I^j_1 + \sum_{j=1}^n I^j_2 - \sum_{j=0}^n I^j_3 -
  \sum_{j=0}^n I^j_4 = \sum_{j=0}^n \left(I^j_1-I^j_3\right)+
  \sum_{j=0}^{n-1} \left(I^{j+1}_2-I^j_4\right) + I^n_4.
\end{multline*}
We now estimate three terms on the right hand side. 

\smallskip 

\noindent\emph{Term} $I_4^n$: By Theorem~\ref{thm:ansatz}, we have 
\begin{equation}\label{eq:I4n}
  \begin{aligned}
    \bigl\lVert I_4^n \bigr\rVert_{L^2(\R^m)} & \le C \veps \norm{\veps \nabla_x u^{(j)}}_{L^2(\R^m)} + C \veps^2 \norm{u^{(j)}}_{L^2(\R^m)}
     \le C \veps \frac{(C_t)^n}{n!}.
  \end{aligned}
\end{equation}

\smallskip 
\noindent 
\emph{Term $\sum (I^j_1-I^j_3)$}: The difference $(I^j_1 - I^j_3)$
contains all the formally $O(\veps^2)$ terms we have dropped in
determining the equation for $A^{(j)}$. To estimate those, we use
the Taylor expansions with respect the beam center $Q$
\begin{align*}
  E_k(x) & = \sum_{|\alpha|\le 3} \frac{\partial_\alpha E_k(Q)}{\alpha !} (x-Q)^\alpha+ R_{4,Q}[E_k]; \\
  d_{kl}(x) & = \sum_{|\alpha|\le 1} \frac{\partial_\alpha
    d_{kl}(Q)}{\alpha !} (x-Q)^\alpha+ R_{2,Q}[d_{kl}],
\end{align*}
where $R_{k,Q}[f]$ denotes the $k$-th order remainder in the Taylor
expansion of the function $f$ at $Q$. 
\begin{equation*}
  \bigl\lVert I^j_1-I^j_3 \bigr\rVert_{L^2(\R^m)} \le I^j_{11} + I^j_{12} + I^j_{13},
\end{equation*}
where ($k = 0$ if $j$ even and $k = 1$ if $j$ odd)
\begin{align*}
  I^j_{11} & = \sum_{|\alpha|=3} \left\| \frac{1}{(2\pi\veps)^{3m/2}}
    \int_K \ud z_0 \int_{0<t_1<\cdots<t_j<t} \ud T_{j:1}\;
    \tau^{(1)}\cdots \tau^{(j)}
    A^{(j)} e^{\frac{i}{\veps} \Theta^{(j)}} \frac{\partial_\alpha E_k(Q^{(j)})}{\alpha !} (x-Q^{(j)})^\alpha \right\|_{L^2}   \\
  & \quad + \veps \sum_{|\alpha|=1} \left\|
    \frac{1}{(2\pi\veps)^{3m/2}} \int_K \ud z_0 \int_{0<t_1<\cdots<t_j<t} \ud T_{j:1}\; \tau^{(1)}\cdots \tau^{(j)}
    A^{(j)} e^{ \frac{i}{\veps} \Theta^{(j)}} P^{(j)} \cdot  \frac{\partial_\alpha d_{kk}(Q^{(j)})}{\alpha !} (x-Q^{(j)})^\alpha \right\|_{L^2} \\
  & \quad +\veps \left\| \frac{1}{(2\pi\veps)^{3m/2}} \int_K \ud z_0
   \int_{0<t_1<\cdots<t_j<t} \ud T_{j:1} \; \tau^{(1)}\cdots
    \tau^{(j)} A^{(j)} e^{ \frac{i}{\veps} \Theta^{(j)}}
    d_{kk}(Q^{(j)}) \cdot (x-Q^{(j)}) \right\|_{L^2}, \\
  I^j_{12} &= \left\| \frac{1}{(2\pi\veps)^{3m/2}} \int_K \ud z_0
    \int_{0<t_1<\cdots<t_j<t} \ud T_{j:1} \; \tau^{(1)}\cdots
    \tau^{(j)}
    A^{(j)} e^{ \frac{i}{\veps} \Theta^{(j)}} R_{4,Q^{(j)}}[E_k] \right\|_{L^2} \\
  & \quad + \veps \left\|\frac{1}{(2\pi\veps)^{3m/2}} \int_K \ud z_0
    \int_{0<t_1<\cdots<t_j<t} \ud T_{j:1} \; \tau^{(1)}\cdots
    \tau^{(j)} A^{(j)}e^{ \frac{i}{\veps} \Theta^{(j)}}
    P^{(j)} \cdot R_{2,Q^{(j)}}[d_{kk}]  \right\|_{L^2} \\
  & \quad + \veps \left\|\frac{1}{(2\pi\veps)^{3m/2}}
    \int_K \ud z_0\int_{0<t_1<\cdots<t_j<t} \ud T_{j:1} \;
    \tau^{(1)}\cdots \tau^{(j)} A^{(j)}e^{ \frac{i}{\veps}
      \Theta^{(j)}}R_{1,Q^{(j)}}[d_{kk}] \cdot
    (x-Q^{(j)})\right\|_{L^2}, \\
  I^j_{13} & = \frac{\veps^2}{2} \left\|D_{kk} u^{(j)}\right\|_{L^2}.
\end{align*}
Here, $I^j_{11}$ contains the next order Taylor expansion terms after asymptotic matching, $I^j_{12}$ contains the remainder terms in the Taylor expansions, and $I^j_{13}$ contains the contribution from $D_{kk}$.

To estimate $I_{11}^j$, note that by Assumption~\ref{assuma},
Proposition~\ref{operator}, and Lemma~\ref{lem:asym}, we have for
$\abs{\alpha} = 3$
\[
\left\|  \frac{1}{(2\pi\veps)^{3m/2}} \int_K \ud z_0\; A^{(n)} e^{\frac{i}{\veps} \Theta^{(n)}} \frac{\partial_\alpha E_k(Q)}{\alpha !} (x-Q)^\alpha \right\|_{L^2} \le C \veps^2.
\]
Thus by a similar calculation as in the proof of
Theorem~\ref{thm:ansatz}, we obtain
\[
 \left\| \frac{1}{(2\pi\veps)^{3m/2}} \int_K \ud z_0 \int_{0<t_1<\cdots<t_j<t} \ud T_{j:1} \; \tau^{(1)}\cdots \tau^{(j)} 
A^{(j)} e^{\frac{i}{\veps} \Theta^{(j)}} \frac{\partial_\alpha E_k(Q)}{\alpha !} (x-Q)^\alpha\Psi_k \right\|_{L^2}\le C \veps^2  \frac{(C_t)^j}{j !} .
\]
We can similarly estimate the other two terms in $I_{11}^j$, which can by controlled by the same bound, which yields 
\begin{equation}\label{eq:I11}
  I_{11}^j \le C \veps^2 \frac{(C_t)^j}{j!}. 
\end{equation}

The estimate of the term $I_{12}^j$ is similar as by Lemma~\ref{lem:asym}, the powers of $(x - Q)^{\alpha}$ is of higher order in $\veps$. In particular, we have 
\begin{equation*}
  \left\|  \frac{1}{(2\pi\veps)^{3m/2}} \int_{K} \ud z_0 A^{(j)} e^{\frac{i}{\veps} \Theta^{(j)}} R_{4,Q}[E_k] \right\|_{L^2}  
  \leq C \sum_{|\alpha|=4} \left\| \frac{1}{(2\pi\veps)^{3m/2}} \int_{K}
    \ud z_0 A^{(j)} e^{\frac{i}{\veps} \Theta^{(j)}}
    |x-Q^{(j)}|^{\alpha}\right\|_{L^2} = O(\veps^2), 
\end{equation*}
and hence,
\begin{equation*}
 \left\| \frac{1}{(2\pi\veps)^{3m/2}} \int_{K} \ud z_0 \int_{0<t_1<\cdots<t_j<t} \ud T_{j:1} \; \tau^{(1)}\cdots \tau^{(j)} 
A^{(j)} e^{ \frac{i}{\veps} \Theta^{(j)}}R_{4,Q}[E_k] \right\|_{L^2} \le C \veps^2 \frac{(C_t)^j}{j!}.
\end{equation*}
The other two terms in $I_{12}^j$ can be similarly bounded, and we arrive at 
\begin{equation}\label{eq:I12}
  I_{12}^j \le C \veps^2 \frac{(C_t)^j}{j!}.
\end{equation}
 
The $I_{13}^j$ term can be  estimated using Assumption~\ref{assumb} and Theorem~\ref{thm:ansatz}, which yields
\begin{equation}\label{eq:I13}
I_{13}^j \le C \veps^2 \norm{u^{(j)}}_{L^2(\R^m)} \le C \veps^2 \frac{(C_t)^j}{j!}.
\end{equation}

Now adding up \eqref{eq:I11}, \eqref{eq:I12}, and \eqref{eq:I13} from
$j = 0$ to $n$, we get
\begin{equation}\label{eq:I1I3}
  \sum_{j=0}^n \norm{I_1^j - I_3^j}_{L^2(\R^m)} \le C \veps^2 \sum_{j=0}^n \frac{(C_t)^j}{j!} \le C \veps^2 e^{C_t}. 
\end{equation}

\smallskip 
\noindent
\emph{Term $\sum (I^{j+1}_2-I^j_4)$:} The difference $(I^{j+1}_2 -
I^j_4)$ contains all the formally $O(\veps^2)$ terms we have dropped
in specifying the hopping coefficients $\tau^{(j)}$. By Taylor
expansion,
\begin{equation*}
  \bigl\lVert I^{j+1}_2-I^j_4 \bigr\rVert_{L^2(\R^m)} \le I^j_{21} + I^j_{22} + I^j_{23},
\end{equation*}
where (for $j$ even, the formula for odd $j$ is similar except that
$d_{01}, D_{01}$ change to $d_{10}, D_{10}$ respectively)
\begin{align*}
  I^j_{21} & =  \veps \sum_{|\alpha|=1} \left\|
    \frac{1}{(2\pi\veps)^{3m/2}} \int_K \ud z_0 \int_{0<t_1<\cdots<t_j<t} \ud T_{j:1}\; \tau^{(1)}\cdots \tau^{(j)}
    A^{(j)} e^{ \frac{i}{\veps} \Theta^{(j)}} P^{(j)} \cdot  \frac{\partial_\alpha d_{01}(Q^{(j)})}{\alpha !} (x-Q^{(j)})^\alpha \right\|_{L^2} \\
  & \quad +\veps \left\| \frac{1}{(2\pi\veps)^{3m/2}} \int_K \ud z_0
    \int_{0<t_1<\cdots<t_j<t} \ud T_{j:1} \; \tau^{(1)}\cdots
    \tau^{(j)} A^{(j)} e^{ \frac{i}{\veps} \Theta^{(j)}}
    d_{01}(Q^{(j)}) \cdot (x-Q^{(j)}) \right\|_{L^2}, \\
  I^j_{22} &= \veps \left\|\frac{1}{(2\pi\veps)^{3m/2}} \int_K \ud z_0
   \int_{0<t_1<\cdots<t_j<t} \ud T_{j:1} \; \tau^{(1)}\cdots
    \tau^{(j)} A^{(j)}e^{ \frac{i}{\veps} \Theta^{(j)}}
    P^{(j)} \cdot R_{2,Q^{(j)}}[d_{01}]  \right\|_{L^2} \\
  & \quad + \veps \left\|\frac{1}{(2\pi\veps)^{3m/2}}
    \int_K \ud z_0 \int_{0<t_1<\cdots<t_j<t} \ud T_{j:1} \;
    \tau^{(1)}\cdots \tau^{(j)} A^{(j)}e^{ \frac{i}{\veps}
      \Theta^{(j)}}R_{1,Q^{(j)}}[d_{01}] \cdot
    (x-Q^{(j)})\right\|_{L^2}, \\
  I^j_{23} & = \frac{\veps^2}{2} \left\|D_{01} u^{(j)}\right\|_{L^2}.
\end{align*}
The estimates of these terms are similar to that we have done for the
terms arising from $\left(I^j_1-I^j_3\right)$, and hence we omit the
details. We get
\begin{equation}\label{eq:I2I4}
  \sum_{j=0}^n \norm{I_2^{j+1} - I_4^{j}}_{L^2(\R^m)} \lesssim \veps^2 \sum_{j=0}^n \frac{(C_t)^j}{j!} \le C \veps^2 e^{C_t}. 
\end{equation}

\smallskip

Therefore, putting together \eqref{eq:I4n}, \eqref{eq:I1I3}, \eqref{eq:I2I4}, we get 
\begin{multline*}
  \Biggl\lVert
  i \veps \partial_t U_{\FGA}^n + \frac{\veps^2}{2} \Delta_x
  U_{\FGA}^n +
  \begin{pmatrix} 
    E_0 \\
    & E_1
  \end{pmatrix} U_{\FGA}^n  -\red {\frac{\veps^2}{2}}  \begin{pmatrix}
    D_{00} & D_{01} \\
    D_{10} & D_{11}
  \end{pmatrix} U_{\FGA}^n -\red{\veps^2}  \sum_{j=1}^d 
  \begin{pmatrix} 
    d_{00} & d_{01} \\
    d_{10} & d_{11}
  \end{pmatrix}_j
  \partial_{x_j} U_{\FGA}^n \Biggl\rVert_{L^2(\R^m)} \\
  \le C \veps \frac{(C_t)^n}{n!} + C \veps^2 e^{C_t}.
\end{multline*}
Taking the limit $n \to \infty$ and by increasing $C_t$ to absorb the
constant $C$ above, we arrive at the conclusion.
\end{proof}

To control the propagation of the consistency error of the FGA solution in time, we need the next lemma.
 
\begin{lemma}\label{lem:conv}
Suppose $H^\veps$ is a family of self-adjoint operators for $\veps > 0$. Suppose a time dependent wave function $\phi ^\veps (t)$,  which belongs to the domain of $H^\veps$, is continuously differentiable in $t$. In addition, $\phi^\veps(t)$ satisfies the following equation,
\begin{equation} \label{eq:approx_remainder}
\left( i \veps \frac{\partial}{\partial t} - H^\veps \right) \phi^\veps (t) = \zeta^\veps(t),
\end{equation}
where the remainder $\zeta^{\veps}$ satisfying the following estimate
\[
\|\zeta^\veps (t)\|_{L^2} \le \mu^\veps(t).
\]
Then, let $\widetilde \phi^\veps $ be the solution to  the Schr\"odinger equation with Hamiltonian $H^\veps$, and 
\[
\|\phi^\veps(0)-\widetilde \phi^\veps (0)\|_{L^2} \le e_0.
\]
We have then
\begin{equation}\label{eq:stab_est}
\|\phi^\veps(t)-\widetilde \phi^\veps (t)\|_{L^2} \le e_0 + \frac{\int_0^t \mu^\veps (s) \ud s}{ \veps}.
\end{equation}
\end{lemma}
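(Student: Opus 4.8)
The plan is to compare $\phi^\veps$ with the exact solution through an energy estimate on their difference, exploiting the self-adjointness of $H^\veps$ to eliminate the leading-order term. Set $w^\veps(t) = \phi^\veps(t) - \widetilde{\phi}^\veps(t)$. Subtracting the Schr\"odinger equation satisfied by $\widetilde{\phi}^\veps$ from \eqref{eq:approx_remainder}, we obtain $\bigl( i\veps \partial_t - H^\veps \bigr) w^\veps(t) = \zeta^\veps(t)$ together with $\norm{w^\veps(0)}_{L^2} \le e_0$. Since $\phi^\veps$ (hence $w^\veps$) is continuously differentiable in $t$ with values in the domain of $H^\veps$, all the manipulations below are legitimate.

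First I would differentiate the squared norm. Writing $\partial_t w^\veps = \tfrac{1}{i\veps}\bigl( H^\veps w^\veps + \zeta^\veps\bigr)$, we get
\[
\frac{\ud}{\ud t}\norm{w^\veps(t)}_{L^2}^2 = 2\,\re \langle \partial_t w^\veps(t), w^\veps(t)\rangle = \frac{2}{\veps}\,\re \langle -i\bigl( H^\veps w^\veps(t) + \zeta^\veps(t)\bigr), w^\veps(t)\rangle.
\]
Self-adjointness of $H^\veps$ forces $\langle H^\veps w^\veps, w^\veps\rangle \in \R$, so $\re\langle -i H^\veps w^\veps, w^\veps\rangle = 0$ and only the source term survives:
\[
\frac{\ud}{\ud t}\norm{w^\veps(t)}_{L^2}^2 = \frac{2}{\veps}\,\re\langle -i\zeta^\veps(t), w^\veps(t)\rangle \le \frac{2}{\veps}\,\norm{\zeta^\veps(t)}_{L^2}\norm{w^\veps(t)}_{L^2} \le \frac{2}{\veps}\,\mu^\veps(t)\,\norm{w^\veps(t)}_{L^2}.
\]

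Next I would pass from the squared norm to the norm itself. Where $\norm{w^\veps(t)}_{L^2}>0$ one has $\frac{\ud}{\ud t}\norm{w^\veps}_{L^2}^2 = 2\norm{w^\veps}_{L^2}\frac{\ud}{\ud t}\norm{w^\veps}_{L^2}$, giving $\frac{\ud}{\ud t}\norm{w^\veps(t)}_{L^2} \le \tfrac{1}{\veps}\mu^\veps(t)$; to handle times where $w^\veps$ vanishes, one works instead with the everywhere-differentiable quantity $g_\eta(t) = \bigl(\norm{w^\veps(t)}_{L^2}^2 + \eta\bigr)^{1/2}$, which satisfies $g_\eta'(t) \le \tfrac{1}{\veps}\mu^\veps(t)$, then integrates and lets $\eta \to 0^+$. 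This yields
\[
\norm{w^\veps(t)}_{L^2} \le \norm{w^\veps(0)}_{L^2} + \frac{1}{\veps}\int_0^t \mu^\veps(s)\,\ud s \le e_0 + \frac{1}{\veps}\int_0^t \mu^\veps(s)\,\ud s,
\]
which is precisely \eqref{eq:stab_est}. (Equivalently, and perhaps more cleanly, one invokes Stone's theorem: $H^\veps$ generates a strongly continuous unitary group $e^{-itH^\veps/\veps}$, Duhamel's principle gives $w^\veps(t) = e^{-itH^\veps/\veps}w^\veps(0) + \tfrac{1}{i\veps}\int_0^t e^{-i(t-s)H^\veps/\veps}\zeta^\veps(s)\,\ud s$, and taking norms with unitarity gives the same bound.) There is no genuine obstacle in this lemma; the only mild technical points are the non-differentiability of $t\mapsto\norm{w^\veps(t)}_{L^2}$ at its zeros, resolved by the $\sqrt{\,\cdot\,+\eta}$ regularization, and — in the Duhamel route — checking that the $C^1$ and domain-valued hypotheses on $\phi^\veps$ justify differentiating through the unitary group.
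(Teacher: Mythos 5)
Your argument is correct, but your primary route differs from the paper's. The paper does not form the difference $w^\veps$ and run an energy estimate; instead it works directly with the unitary propagator $\mathcal U^\veps(t,s)$ generated by the self-adjoint $H^\veps$: it writes $\norm{\phi^\veps(t)-\widetilde\phi^\veps(t)}_{L^2}=\norm{\mathcal U^\veps(0,t)\phi^\veps(t)-\widetilde\phi^\veps(0)}_{L^2}$, splits off the initial error by the triangle inequality, expresses $\mathcal U^\veps(0,t)\phi^\veps(t)-\phi^\veps(0)$ as the integral of $\partial_s\bigl(\mathcal U^\veps(0,s)\phi^\veps(s)\bigr)$, and observes that the $H^\veps$ contributions cancel, leaving only $\tfrac1\veps\int_0^t\norm{\mathcal U^\veps(0,s)\zeta^\veps(s)}_{L^2}\ud s\le\tfrac1\veps\int_0^t\mu^\veps(s)\ud s$ --- essentially the Duhamel route you mention parenthetically. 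The comparison: the paper's argument only ever differentiates $\mathcal U^\veps(0,s)\phi^\veps(s)$, so it uses exactly the stated hypotheses on $\phi^\veps$ (that it is $C^1$ and domain-valued) and needs no regularity of $\widetilde\phi^\veps$ beyond $\widetilde\phi^\veps(t)=\mathcal U^\veps(t,0)\widetilde\phi^\veps(0)$; your energy estimate instead differentiates $w^\veps=\phi^\veps-\widetilde\phi^\veps$, which tacitly requires $\widetilde\phi^\veps$ to be a strong (differentiable, domain-valued) solution, i.e.\ $\widetilde\phi^\veps(0)\in D(H^\veps)$ --- a harmless but genuine extra assumption --- and it also needs your $\sqrt{\,\cdot\,+\eta}$ regularization to pass from the squared norm to the norm. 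What your route buys is that it is more elementary (no explicit propagator) and generalizes immediately to perturbations where $H^\veps$ is replaced by an operator with a controlled non-self-adjoint part, since the dissipative term simply adds to the Gronwall right-hand side; what the paper's route buys is a shorter argument under weaker regularity, with unitarity doing all the work.
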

\begin{proof}
Since $H^\veps$ is self-adjoint, it generates a unitary propagator $\mathcal U^\veps(t,s)=\exp\left(\int^t_s -i H^\veps \ud s'/\veps\right)$, such that
\[
\mathcal U^\veps (t,s) \widetilde \phi^\veps (s) = \widetilde \phi^\veps(t).
\] 
Therefore, we obtain,
\begin{align*}
\|\phi^\veps(t)-\widetilde \phi^\veps (t)\|_{L^2} & = \|\phi^\veps(t)- \mathcal U^\veps (t,0)  \widetilde \phi^\veps (0)\|_{L^2}  \\
&=  \|  \mathcal U^\veps (0,t) \phi^\veps(t)-  \widetilde \phi^\veps (0)\|_{L^2}.
\end{align*}
Here, we have used $ (\mathcal U^\veps)^{-1} (t,0)=\mathcal U^\veps (0,t)$. Then, by triangle inequality, we have
\begin{align*}
\|\phi^\veps(t)-\widetilde \phi^\veps (t)\|_{L^2} & \le \|  \mathcal U^\veps (0,t) \phi^\veps(t)-   \phi^\veps (0)\|_{L^2} + e_0 \\[5 pt]
& = \left\|  \int_0^t  \frac{\partial}{\partial s} \left( \mathcal U^\veps (0,s) \phi^\veps(s) \right) \ud s \right\|_{L^2} + e_0 \\
& = \left\| \int_0^t \left(   \frac{\partial}{\partial s} \mathcal U^\veps (0,s)\phi^\veps(s) +\mathcal U^\veps (0,s)  \frac{\partial}{\partial s} \phi^\veps(s) \right) \ud s  \right\|_{L^2} + e_0.
\end{align*}
Then, by using properties of the unitary propagator and equation \eqref{eq:approx_remainder}, we get
\begin{align*}
\|\phi^\veps(t)-\widetilde \phi^\veps (t)\|_{L^2}  & \le \left\|  \frac{i}{\veps}\int_0^t \left( - \mathcal U^\veps (0,s) H^\veps \phi^\veps(s) +\mathcal U^\veps (0,s)  ( H^\veps \phi^\veps(s) + \zeta^\veps(s)) \right) \ud s  \right\|_{L^2} +e_0 \\
& = \frac{1}{\veps} \left\|  \int_0^t \left( \mathcal U^\veps (0,s)  \zeta^\veps(s)\right) \ud s  \right\|_{L^2} +e_0.
\end{align*}
We arrive at \eqref{eq:stab_est} by noticing that
\[
 \left\|  \int_0^t \left( \mathcal U^\veps (0,s)  \zeta^\veps(s)\right) \ud s  \right\|_{L^2}  \le \int_0^t  \left\| \mathcal U^\veps (0,s)  \zeta^\veps(s)\right\|_{L^2} \ud s \le \int_0^t \mu^\veps(s) \ud s.
\]
\end{proof}

In the lemma above, $\phi^\veps$ almost solves the Schr\"odinger
equation with Hamiltonian $H^\veps$ in the sense of equation
\eqref{eq:approx_remainder}, where the remainder term $\zeta^\veps$ is
controlled. Then, $\phi^\veps$ can be considered as an approximate
solution to $\widetilde \phi^\veps$, if the right hand side of the
estimate \eqref{eq:stab_est} is small.  Therefore, if we take
$\phi^\veps$ to the approximation given by FGA with surface hopping,
then with the stability lemma, we can conclude the error estimate in
Theorem~\ref{thm:main}.

\begin{proof}[Proof of Theorem~\ref{thm:main}]
Note that the initial error is given by $\eps_{\text{in}}$ by definition. The theorem is then a corollary of Theorem~\ref{thm:consistency} and Lemma~\ref{lem:conv}.
\end{proof}

\bibliographystyle{amsxport}
\bibliography{surfacehopping} 
 
\end{document}